\theoremstyle{definition}
\newtheorem{theorem}{Theorem}[section]
\newtheorem{proposition}[theorem]{Proposition}
\newtheorem{lemma}[theorem]{Lemma}
\newtheorem{corollary}[theorem]{Corollary}
\newtheorem{definition}[theorem]{Definition}
\newtheorem{example}[theorem]{Example}
\newtheorem{question}[theorem]{Question}
\newtheorem{remark}[theorem]{Remark}
 \numberwithin{equation}{section}
\numberwithin{equation}{section}
\newcommand{\RR}{\mathbb{R}}
\newcommand{\FF}{\mathbb{F}}
\newcommand{\PP}{\mathbb{P}}
\newcommand{\CC}{\mathbb{C}}
\newcommand{\ZZ}{\mathbb{Z}}
\newcommand{\blk}{\text{Bl}_{k}}
\newcommand{\scal}{\text{scal}}
\newcommand{\syst}{\operatorname{sys}}
\newcommand{\bl}{\operatorname{Bl}}
\newcommand{\sysh}{\operatorname{sys}^{\operatorname{hol}}}
\newcommand{\sysp}{\operatorname{sys}^{\pi_2}_2}
\DeclareMathOperator{\Vol}{Vol}
\DeclareMathOperator{\Area}{Area}
\DeclareMathOperator{\NE}{NE}
\DeclareMathOperator{\ric}{Ric}
\begin{document}
\makeatletter
\newcommand{\rmnum}[1]{\romannumeral #1}
\newcommand{\Rmnum}[1]{\expandafter\@slowromancap\romannumeral #1@}
\makeatother
\title{The 2-systole on compact Kähler surfaces with positive scalar curvature}
 \author{Zehao Sha}
\begin{abstract}
We study the 2-systole on compact Kähler surfaces of positive scalar curvature. For any such surface $(X,\omega)$, we prove the sharp estimate \(\min_X S(\omega)\cdot\syst_2(\omega)\le12\pi\), with equality if and only if $X=\PP^2$ and $\omega$ is the Fubini--Study metric. Using the classification of positive scalar curvature Kähler surfaces, we determine the optimal constant in each case and describe the corresponding rigid models. When $X$ is a non-rational ruled surface, we also give an independent analytic proof, adapting Stern's level set method to the holomorphic fibration in Kähler setting.
\end{abstract}
\maketitle
 
\section{Introduction}

Systolic geometry studies quantitative lower bounds for nontrivial homology classes represented by cycles of minimal volume. Following Berger's terminology \cite{Berger72Pu,Berger72Loewner,BergerIsosystolic} (see also
\cite{BergerWhatIsSystole}) and Gromov's subsequent development of systolic
geometry \cite{Gromov83}, if $(M,g)$ is a closed Riemannian manifold of
dimension $n\ge k$, the $k$--systole of $(M,g)$ is defined by
\[
  \syst_k(g):= \inf\bigl\{\Vol_g(Z)\ \big|\ Z \text{ is a }k\text{-cycle with }
  [Z]\neq0\in H_k(M;\mathbb Z)\bigr\}.
\]
Here an integral $k$-cycle may be understood, for instance, as an integral
current in the sense of geometric measure theory, and $\Vol_g(Z)$ denotes its
$k$-dimensional mass with respect to $g$.

In the setting of \emph{positive scalar curvature} (PSC), a particularly natural
object is the \emph{2-systole}. Indeed, scalar curvature is obtained by
averaging sectional curvatures over $2$-planes, and a basic mechanism for
producing PSC metrics is to insert an $S^2$-component: for product metrics, one
has \(\scal(g\oplus h)=\scal(g)+\scal(h)\), so $S^2(r)\times (N,h)$ carries PSC
metrics once $r$ is sufficiently small, to a large extent, independently of the
geometry of $N$. This suggests that, in many PSC constructions, the relevant geometric scale is encoded by a $2$-sphere direction, and hence by a $2$-systolic quantity.

In dimension three, the interplay between $2$-systole and PSC is by
now well understood. If $(M^3,g)$ is PSC, Schoen-Yau \cite{SchoenYau1979} proved
that any area--minimizing surface in $M$ is homeomorphic to either $S^2$ or
$\mathbb{RP}^2$, showing that area-minimizing surfaces in the PSC setting are forced to have spherical topology. Building on this, Bray-Brendle-Neves \cite{BBN2010} established a
sharp $\pi_2$-systolic inequality. Denote by
\(\sysp(g)\) the infimum of the areas of homotopically
nontrivial $2$--spheres in $(M^3,g)$. Then
\begin{equation}\label{eq:BBN}
  \min_M \scal(g)\cdot \sysp(g)\;\le 8\pi,
\end{equation}
with equality if and only if the universal cover of $(M^3,g)$ is isometric to the Riemannian product $S^2\times\mathbb R$ endowed with the round metric on $S^2$ and the flat metric on $\mathbb R$. In \cite{stern2022scalar}, Stern gave a new proof of \eqref{eq:BBN} leveraging the level set method. The bound \eqref{eq:BBN} was recently refined by a quantitative gap theorem of Xu \cite{Xu2025}, and there has been substantial further progress on systolic inequalities under scalar curvature assumptions; see for instance
\cite{ZhuPAMS2020,Richard2020,Orikasa2025}.

In real dimension four, by contrast, our current understanding of $2$-systoles under PSC assumptions is much more limited: even on $S^2\times S^2$ with positive scalar curvature, global upper bounds for $\syst_2(g)$ are only known under additional geometric hypotheses. In this paper we initiate a systematic study of $2$-systolic inequalities in dimension four under the extra assumption that the metric is K\"ahler. More precisely, given a compact K\"ahler surface $(X,\omega)$ we denote by $g_\omega$ the associated Riemannian metric and we write
\[
  \syst_2(\omega) := \syst_2(g_\omega).
\]
We also write $S(\omega)$ for the Chern scalar curvature of $\omega$; for K\"ahler metrics, one has $2S(\omega)=\scal(g_\omega)$), so positivity of $S(\omega)$ is equivalent to positive Riemannian scalar curvature. With this convention, all our main inequalities are stated purely in terms of the K\"ahler form
$\omega$. 
\begin{theorem}\label{thm:intro-global}
Let $(X,\omega)$ be a compact K\"ahler surface with $S(\omega)>0$. Then
\[
\min_X S(\omega)\cdot \syst_2(\omega)\;\le\;12\pi.
\]
Moreover, equality holds if and only if $X\cong \PP^2$ and $\omega$ is the
Fubini--Study metric.
\end{theorem}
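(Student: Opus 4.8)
The plan is to combine two classical facts—the Wirtinger calibration inequality for holomorphic curves and the integral formula for the Chern scalar curvature—and then reduce the entire statement to a sharp intersection inequality that is analyzed through the minimal model classification.

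First I would record the two inputs. Since every irreducible holomorphic curve is calibrated by $\omega$, it is area-minimizing in its homology class, so for every nonzero effective class $[C]$ one has $\syst_2(\omega)\le [\omega]\cdot[C]$. On the analytic side, the Ricci form represents $2\pi c_1(X)$, whence $\int_X S(\omega)\,\tfrac{\omega^2}{2}=2\pi\,c_1(X)\cdot[\omega]$; averaging gives $\min_X S(\omega)\le 4\pi\,\dfrac{c_1(X)\cdot[\omega]}{[\omega]^2}$, with equality precisely when $S(\omega)$ is constant. Multiplying the two bounds, the theorem reduces to the purely numerical claim that some effective curve $C$ satisfies
\[
\frac{(c_1(X)\cdot[\omega])\,([\omega]\cdot[C])}{[\omega]^2}\le 3,
\]
with equality characterizing $\PP^2$.

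Next I would invoke the classification. The hypothesis $S(\omega)>0$ forces $c_1(X)\cdot[\omega]>0$, so $X$ has Kodaira dimension $-\infty$ and is therefore rational or ruled. Because $K_X$ is not nef, the cone theorem supplies a $K_X$-negative extremal rational curve $C$ with $0<-K_X\cdot C=c_1(X)\cdot C\le 3$, the three contraction types being: $\PP^2\to\mathrm{pt}$ (here $C$ is a line, $C^2=1$, $c_1\cdot C=3$); a conic bundle with fiber $C=F$, $F^2=0$, $c_1\cdot C=2$; and the blow-down of a $(-1)$-curve $C=E$, $E^2=-1$, $c_1\cdot C=1$. Writing $c_1=s\,[\omega]+c_1'$ and $C=t\,[\omega]+C'$ with $c_1',C'\in[\omega]^\perp$, the Hodge index theorem makes the intersection form negative definite on $[\omega]^\perp$ and yields the identity $\frac{(c_1\cdot[\omega])([\omega]\cdot C)}{[\omega]^2}=c_1\cdot C-c_1'\cdot C'$. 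For $\PP^2$ the orthogonal complement is trivial, so the ratio is exactly $3$; in the remaining minimal cases—Hirzebruch surfaces and ruled surfaces over a base of genus $\ge1$—an explicit computation on the rank-two lattice $\langle F,C_0\rangle$ (fiber and section) over the Kähler cone shows the ratio is bounded by $2$ and $1$ respectively, while blow-ups only introduce the smaller curves $E$. Choosing $C$ to be the area-minimizing extremal curve in each case then gives the sharp trichotomy $12\pi,\,8\pi,\,4\pi$.

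I expect the main obstacle to be exactly this last step: controlling the primitive term $c_1'\cdot C'$ uniformly. The crude Cauchy–Schwarz bound on $[\omega]^\perp$ is too weak, so the argument must use the specific position of the extremal ray in $\mori(X)$ together with the structure of the nef cone, which is why the minimal model classification enters in an essential way rather than cosmetically. For the rigidity, equality propagates back to force $S(\omega)$ constant, the numerical ratio to equal $3$—which by the case analysis occurs only for $X\cong\PP^2$—and the systole to be realized by a line; a constant scalar curvature Kähler metric on $\PP^2$ lies in a positive multiple of $c_1$ and is unique up to automorphism, hence is the Fubini–Study metric, completing the characterization.
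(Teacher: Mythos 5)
Your reduction is exactly the paper's: calibration gives $\syst_2(\omega)\le[\omega]\cdot[C]$, averaging the Chern scalar curvature gives $\min_X S(\omega)\le 4\pi\,c_1(X)\cdot[\omega]/[\omega]^2$, and the theorem becomes the numerical claim that some effective curve satisfies $(c_1(X)\cdot[\omega])([\omega]\cdot[C])/[\omega]^2\le 3$, with strict inequality off $\PP^2$. Up to this point you agree with the paper (this is its functional $\mathcal{J}_X$ and Theorem \ref{thm:finite-J}(1)), and your treatment of the minimal models and of the rigidity endgame (cscK on $\PP^2$ is Fubini--Study up to automorphism) is sound.

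The gap is the step you yourself flag: you never prove the numerical bound on blow-ups, and the mechanism you sketch for it would fail. First, no fixed extremal curve works uniformly over the K\"ahler cone: on $\bl_1\PP^2$ with $[\omega]=H-tE$, the curve $C=E$ gives ratio $(3-t)t/(1-t^2)\to\infty$ as $t\to1$, so ``blow-ups only introduce the smaller curves $E$'' is false as stated --- the minimizing curve must be chosen as a function of $[\omega]$ (here one must switch to $H-E$), and the real task is to bound the \emph{minimum} of the ratio over all effective curves, uniformly over the whole K\"ahler cone, and strictly below $3$. Second, your plan to run a case analysis over the contraction types of extremal rays cannot terminate: for $\blk\PP^2$ with $k\ge9$ very general points the Mori cone is not finitely generated (infinitely many $(-1)$-curves, hence infinitely many rays of blow-down type), so there is no finite list of candidates $C$ to compare, and the Hodge-index term $c_1'\cdot C'$ in your identity is exactly as hard to control as the original ratio. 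This is precisely where the paper's proof has its main content: it discards all but finitely many curve constraints (the classes $E_i$ and $H-E_i-E_j$), records only the coarse data $m=\sysh_2([\omega])$, $S=\sum t_i$, and uses the mass-shift optimization (Proposition \ref{prop:mass-shift}) to bound $Q=\sum t_i^2$ and reduce to an explicit one-variable function $G_k(m)$ whose supremum is $\le 1$ (i.e.\ ratio $\le 3$, strictly, for all $k\ge1$); the analogous argument is then rerun for Hirzebruch surfaces and non-rational ruled surfaces. Without this ingredient (or a substitute for it), your proposal establishes the theorem only for the minimal models $\PP^2$, $\FF_e$, and ruled surfaces over positive genus, not for their blow-ups, and in particular the rigidity statement --- which needs strict inequality on every non-minimal PSC surface --- remains unproven.
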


The birational classification of PSC Kähler surfaces is by now complete. Yau proved that a compact Kähler surface with positive total scalar curvature has Kodaira dimension $-\infty$ \cite{Yau74}. By the Enriques–Kodaira classification, a minimal such surface is therefore either $\PP^2$ or ruled \cite[Chapter~V]{BHPV04}. In addition, LeBrun showed that for minimal complex surfaces of Kähler type, the existence of a PSC Riemannian metric is equivalent to the existence of a PSC Kähler metric, and this occurs precisely for $\PP^2$ and ruled surfaces \cite[Theorem~4]{LeBrun95}. The remaining birational step was completed by Brown, who proved that blow-ups preserve the sign of scalar curvature for sufficiently small data \cite[Theorem~B]{Brown24}. Thus a compact Kähler surface admits a PSC Kähler metric if and only if it is birational to $\PP^2$ or to a ruled surface. It is therefore natural to restate Theorem~\ref{thm:intro-global} in a form that records how the optimal constant depends on the classification. This leads to the following three-way refinement.
\begin{theorem}\label{thm:intro-refined}
Let $(X,\omega)$ be a compact K\"ahler surface with positive scalar curvature.
Then:
\begin{enumerate}[\normalfont(1)]
  \item \emph{(Theorem \ref{thm:P2-blowups})}
  If $X$ is birational to $\PP^2$, then
  \[
  \min_X S(\omega)\cdot \syst_2(\omega)\;\le\;12\pi,
  \]
  with equality if and only if $(X,\omega)\cong (\PP^2,\omega_{\mathrm{FS}})$.

  \item \emph{(Theorem \ref{thm:hirzebruch})}
  If $X$ is a rational ruled surface fibred over $\PP^1$, then
  \[
  \min_X S(\omega)\cdot \syst_2(\omega)\;\le\;8\pi,
  \]
  with equality if and only if $X\cong \PP^1\times\PP^1$ and $\omega$ is the product Fubini--Study metric on $\PP^1\times\PP^1$ (up to scaling).

  \item \emph{(Theorems \ref{thm:ruled-g>=1} and \ref{thm:ruled-g>=1-levelset})}
  If $X$ is a non-rational ruled surface fibred over a base curve $B$ of genus $g(B)\ge1$, then
  \[
  \min_X S(\omega)\cdot \syst_2(\omega)\;\le\;4\pi,
  \]
  with equality if and only if $B$ is an elliptic curve, the universal cover of $X$ is biholomorphic to $\PP^1\times\CC$, and $\omega$ is induced by the product of the Fubini--Study metric on $\PP^1$ and a flat metric on $\CC$, with $\syst_2(\omega)$ is realized by a $\PP^1$--fibre.
\end{enumerate}
\end{theorem}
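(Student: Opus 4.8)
The plan is to prove the three cases separately, organized by the minimal model via the classification recalled above (Yau--LeBrun--Brown), and then observe that Theorem~\ref{thm:intro-global} is simply the envelope of the three bounds, the maximum $12\pi$ being attained only in the $\PP^2$ case. All three rest on the same two ingredients, which I would isolate first. The first is \emph{calibration}: since $\omega$ calibrates complex curves, every holomorphic curve $C$ is area-minimizing in its homology class, so $\syst_2(\omega)\le\int_C\omega=[\omega]\cdot[C]$ for any $C$ with $[C]\neq0$. The second is the \emph{Chern--Ricci identity} $\rho\wedge\omega=\tfrac12 S(\omega)\,\omega^2$, which upon integration gives $\int_X S(\omega)\,dV_\omega=2\pi\,c_1(X)\cdot[\omega]$ and hence
\[
\min_X S(\omega)\ \le\ \frac{2\pi\,c_1(X)\cdot[\omega]}{\Vol_\omega(X)}=\frac{4\pi\,\bigl(c_1(X)\cdot[\omega]\bigr)}{[\omega]^2},
\]
with equality iff $S(\omega)$ is constant. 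Combining the two, for any holomorphic $C_0$ with $[C_0]\neq0$,
\[
\min_X S(\omega)\cdot\syst_2(\omega)\ \le\ \frac{4\pi\,\bigl(c_1(X)\cdot[\omega]\bigr)\bigl([\omega]\cdot[C_0]\bigr)}{[\omega]^2},
\]
so the problem reduces to maximizing this intersection-theoretic ratio over the Kähler cone and the choice of $C_0$.

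Next I would carry out this optimization on each minimal model using the Néron--Severi lattice and the Hodge index theorem. On $\PP^2$ the Picard rank is one, $[\omega]=tH$ and $C_0=H$, so the ratio equals $4\pi\cdot\tfrac{(3t)(t)}{t^2}=12\pi$ identically. On a minimal Hirzebruch surface $\FF_e$ ($e=0$ or $e\ge2$), writing $[\omega]=a\,\xi+b\,F$ and taking $C_0$ a ruling fibre, the ampleness constraints force the ratio to be $\le 8\pi$, with equality exactly at the symmetric class on $\FF_0=\PP^1\times\PP^1$. On a ruled surface over $B$ with $g(B)\ge1$, take $C_0=F$ the fibre, so $c_1(X)\cdot F=2$ and $[\omega]\cdot F=a$; using the canonical formula $K_X\equiv-2\xi+(2g-2-e)F$, the required inequality $(c_1(X)\cdot[\omega])\,a\le[\omega]^2$ collapses cleanly to
\[
(2g-2)\,a^2\ \ge\ 0,
\]
which holds precisely because $g(B)\ge1$, with equality iff $g(B)=1$. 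I would then pass to blow-ups: $c_1(\widetilde X)=\pi^*c_1-\sum E_i$ acquires only negative contributions $-E_i\cdot[\widetilde\omega]$, while the fibre/line class and its intersection numbers are preserved and the exceptional curves supply additional short holomorphic curves; a short computation confirms each bound persists on every blow-up and becomes strict off the minimal model.

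The rigidity analysis is where I expect the real work. Equality forces simultaneously that $S(\omega)$ be constant and that the intersection ratio be extremal. In the $\PP^2$ case the ratio is automatically extremal, so equality is equivalent to constant scalar curvature; since the class is proportional to $c_1$, writing $\rho=\tfrac{2\pi}{\lambda}\omega+\ddbar\phi$ and tracing shows a cscK metric there is Kähler--Einstein, whence by Bando--Mabuchi uniqueness it is Fubini--Study. In the Hirzebruch and ruled cases, extremality of the ratio additionally pins the class down (to the symmetric class on $\PP^1\times\PP^1$, respectively to the fibre-balanced class on $\PP^1\times E$) and forces $\syst_2$ to be realized by a genuine $\PP^1$-fibre; combined with constancy of $S(\omega)$ and the product structure, this recovers the stated rigid metrics, with the universal cover in the elliptic case splitting as $\PP^1\times\CC$.

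Finally, for the ruled case with $g(B)\ge1$ I would give the independent analytic proof of Theorem~\ref{thm:ruled-g>=1-levelset}, adapting Stern's level-set method to the holomorphic fibration $\pi:X\to B$ itself. A nonzero holomorphic $1$-form on $B$ (available exactly because $g(B)\ge1$) pulls back to a harmonic $1$-form on $X$ whose generic ``level set'' is a $\PP^1$-fibre, playing the role of Stern's level surfaces; running the Bochner/level-set identity along the fibration, the Gauss--Bonnet term contributes $2\pi\chi(\PP^1)=4\pi$ while the curvature term reproduces $\min_X S(\omega)\cdot\syst_2(\omega)$. The main obstacle, I expect, is precisely this adaptation: Stern's identity is tailored to codimension-one level sets of maps to $S^1$, whereas here the fibres are codimension two, so the derivation must be reorganized around the full holomorphic fibration, controlling the second fundamental form and the normal curvature of the fibres with the correct sign. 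That sign-tracking is the delicate analytic step on which the $4\pi$ bound hinges.
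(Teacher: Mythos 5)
Your overall framework coincides with the paper's: calibration gives $\syst_2(\omega)\le[\omega]\cdot[C]$ for every effective curve, integrating the scalar curvature gives $\min_X S(\omega)\le 4\pi\,c_1(X)\cdot[\omega]/[\omega]^2$, and the problem becomes the intersection-theoretic optimization of the paper's functional $\mathcal{J}_X$. Your minimal-model computations are correct, and your reduction of the genus $g\ge1$ ruled case to $(2g-2)a^2\ge0$ via the fibre class is clean; it even extends to blow-ups in that case with one extra line, since for $[\omega_t]=aC_0+bF-\sum t_iE_i$ one gets $(c_1\cdot[\omega_t])\,a-[\omega_t]^2=(2-2g)a^2+\sum t_i^2-a\sum t_i<0$ because each $t_i<a$. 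The genuine gap is the sentence ``a short computation confirms each bound persists on every blow-up,'' applied to cases (1) and (2). That claim is exactly where the bulk of the paper's work lies, and no short computation is available: blowing up decreases \emph{both} $c_1\cdot[\omega_t]=3a-\sum t_i$ and $[\omega_t]^2=a^2-\sum t_i^2$, so the average-curvature factor can increase while the systole changes, and near the boundary of the K\"ahler cone numerator and denominator degenerate together; moreover for $k\ge9$ very general points the Mori cone of $\blk\PP^2$ is not finitely generated, so $\sysh_2([\omega_t])$ is an infimum over infinitely many curve classes and cannot be evaluated ray by ray.

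The paper's resolution is the mass-shift argument (Proposition 3.1): keep only the finitely many constraints $m\le t_i$ and $m\le a-t_i-t_j$ coming from $E_i$ and $H-E_i-E_j$, note that for fixed $(m,S)$ with $S=\sum t_i$ the quantity $Q=\sum t_i^2$ is maximized at the extremal vector $(m,\dots,m,S-(k-1)m)$, and then carry out a genuine two-variable optimization over an explicitly bounded $(m,S)$-domain, using $m\le a/3$ and the PSC constraint $3a-S>0$ in an essential way. The answer depends on $k$ (supremum $4\pi$ for $2\le k\le8$, $2\pi$ for $k\ge9$), mirroring the del Pezzo / $K$-nef / $c_1^2<0$ trichotomy, which shows the $k$-dependence is real and cannot be bypassed by a uniform one-line estimate; the same machinery must then be redone for blow-ups of Hirzebruch surfaces (the paper's Lemmas 4.7 and 4.8). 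Your proposal contains none of this, so cases (1) and (2) remain unproved beyond the minimal models. Two smaller points: in case (3) your rigidity step ``combined with constancy of $S(\omega)$ and the product structure'' conceals the actual argument, which in the paper requires the Apostolov--T{\o}nnesen-Friedman characterization of cscK metrics on ruled surfaces (polystability of the defining bundle) together with Fujiki's results to obtain the isometric covering by $\PP^1\times\CC$; and your level-set sketch correctly flags the codimension issue but does not resolve it, whereas the paper does so via the Bochner formula for $|\partial f|^2$ of the holomorphic fibration, the traced Gauss equation on smooth fibres, and a co-area formula, with Gauss--Bonnet on the $\PP^1$-fibres producing the $4\pi$.
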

\subsection*{Idea of the proof}

A key feature of the K\"ahler setting is that several quantities relevant to our problem admit a direct cohomological interpretation. In real dimension four, complex curves are calibrated by the K\"ahler form: if $(X,\omega)$ is a compact K\"ahler surface and $C\subset X$ is a smooth complex curve, then
\[
  \Area_\omega(C)=\int_C \omega = [\omega]\cdot [C].
\]
More generally, if $C=\sum_i m_i C_i$ is an effective (possibly singular or reducible) curve, viewed as a positive integral real $2$--cycle, then
\[
  [\omega]\cdot [C]
  = \int_C \omega
  = \sum_i m_i \int_{C_i^{\mathrm{reg}}}\omega .
\]
Thus the $\omega$--area of an effective curve is encoded entirely by the K\"ahler class $[\omega]$ and the corresponding curve class in homology. This motivates the following algebraic analogue of the $2$--systole.

\begin{definition}[Holomorphic $2$--systole]\label{def:2-systole-kahler}
Let $(X,\omega)$ be a compact K\"ahler manifold. The \emph{holomorphic
$2$--systole} of a K\"ahler class $[\omega]$ is defined by
\begin{equation}\label{eq:2-systole-kahler}
  \sysh_2([\omega])
  :=
  \inf\Bigl\{[\omega]\!\cdot\![C]\ \Big|\ 
  C\subset X \text{ an effective curve},\ 0\neq [C]\in H_2(X;\mathbb Z)\Bigr\}.
\end{equation}
\end{definition}

In general, the infimum in \eqref{eq:2-systole-kahler} need not be taken over a
nonempty set, since a compact K\"ahler manifold may contain no effective curves.
In the PSC K\"ahler setting, however, this issue does not arise. By
\cite[Theorem~1.3]{yang2019scalar}, a compact complex manifold admits a
Hermitian metric with positive Chern scalar curvature if and only if its
canonical bundle is not pseudo-effective. If moreover the manifold is K\"ahler,
then by \cite[Theorem~1.1]{ou2025characterization} this is equivalent to
uniruledness. In particular, every compact PSC K\"ahler manifold contains
plenty of effective curves, so \(\sysh_2([\omega])\) is well-defined and finite
in the setting of this paper.

We then introduce the scale-invariant functional
\[
  \mathcal{J}_X([\omega])
  :=
  \sysh_2([\omega])\cdot \hat S([\omega]),
\]
where \(\hat S([\omega])\) denotes the normalized average Chern scalar
curvature of the K\"ahler class \([\omega]\). Since this paper concerns K\"ahler
surfaces, we may write
\[
  \mathcal{J}_X([\omega])
  =
  \sysh_2([\omega])\cdot
  \frac{4\pi\, c_1(X)\cdot[\omega]}{[\omega]^2}.
\]
The importance of \(\mathcal J_X\) comes from the elementary comparison
\[
  \syst_2(\omega)\le \sysh_2([\omega]),
\]
because every effective curve gives a nontrivial real $2$--cycle, together with
the inequality
\[
  \min_X S(\omega)\le \hat S([\omega]).
\]
Hence
\[
  \min_X S(\omega)\cdot \syst_2(\omega)\le \mathcal J_X([\omega]).
\]
Therefore, the PSC \(2\)-systolic inequalities in this paper follow from sharp
upper bounds for \(\mathcal J_X\). Moreover, equality in the above comparison
can occur only if \(\omega\) has constant scalar curvature and
\(\syst_2(\omega)\) is realized by a holomorphic curve.

For PSC K\"ahler surfaces, this becomes a problem in intersection theory. Indeed, such surfaces are projective, and the K\"ahler cone admits a numerical description via the Nakai--Moishezon criterion. We may therefore regard \(\mathcal J_X\) as a function on the cone of classes \([\omega]\) satisfying
\[
  [\omega]^2>0,
  \qquad
  [\omega]\cdot C>0 \ \text{for every irreducible curve } C\subset X,
  \qquad
  c_1(X)\cdot [\omega]>0.
\]
A first step, carried out in Section~\ref{sec:preliminaries}, is to show that \(\mathcal J_X\) remains uniformly bounded on this region.

The main difficulty appears on blow-ups. If \(\pi\colon X_k\to X\) is the blow-up of a minimal surface \(X\) at \(k\) points, then any K\"ahler class on \(X_k\) can be written as
\[
  [\omega_t]=\pi^*[\omega]-\sum_{i=1}^k t_i E_i,
\]
where \(E_i\) are the exceptional divisors. In these coordinates, the averaged scalar curvature in \(\mathcal J_{X_k}\) is determined cohomologically, whereas the holomorphic \(2\)-systole  \(\sysh_2([\omega_t])\) depends a priori on the effective curve classes on \(X_k\). Thus, for a fixed blow-up type, the configuration dependence of \(\mathcal J_{X_k}\) is encoded in the Mori cone. In particular, special configurations may produce additional effective curves, hence additional inequalities cutting out the admissible region in the K\"ahler cone. From the point of view of upper bounds for \(\mathcal J_{X_k}\), this can only make the problem more restrictive. Accordingly, the largest possible supremum should be expected in the most generic configuration, when the blow-up points are as general as possible.

The difficulty is that the quantity \(\sysh_2([\omega_t])\) depends a priori on all effective curve classes on \(X_k\), and when the Mori cone is not rational polyhedral this involves infinitely many possibilities. Our key observation is that one can nevertheless control \(\mathcal J_{X_k}([\omega_t])\) using only finitely many numerical parameters extracted from the coefficients \(t_i\), together with a mass-shift argument that identifies the worst possible distribution of these coefficients under fixed coarse data. This leads to an explicit finite-dimensional optimization problem over a coarse admissible region determined by finitely many effective curves. In this way, the geometric problem is reduced to a manageable numerical one. We carry this out first for blow-ups of \(\PP^2\), and then for blow-ups of ruled surfaces.

For non-rational ruled surfaces, we also give an independent analytic proof,
inspired by Stern's level-set method \cite{stern2022scalar}. In the K\"ahler
setting, the holomorphic fibration provides a natural replacement for the harmonic
map used in the Riemannian argument. This yields the same sharp constant and the
same rigidity statement as the algebro-geometric approach.

\subsection*{Why the K\"ahler setting is special}

It is useful to contrast the above discussion with the purely Riemannian PSC setting in higher dimensions. On a closed Riemannian manifold \((M^n,g)\), the homological \(k\)-systole is only defined when \(H_k(M;\mathbb Z)\neq 0\), and
even in that case a lower bound on scalar curvature does not in general force any upper bound for \(\syst_k(g)\). A basic family of examples is
\[
  M=S^k\times S^{n-k},\qquad
  g_t=t^2 g_{S^k}\oplus g_{S^{n-k}},\qquad t\ge 1,
\]
where \(g_{S^m}\) is the round metric of sectional curvature \(1\). Then \(\scal(g_t)\) stays uniformly positive, whereas the \(k\)-dimensional factor is stretched and \(\syst_k(g_t)\to\infty\) as \(t\to\infty\). Thus positive scalar curvature alone does not control homological systoles in general.

By contrast, K\"ahler geometry provides additional structure. In basic K\"ahler examples, such as projective space, the relevant even-dimensional homology groups are already nontrivial, so the corresponding systolic questions are non-vacuous. More importantly for the present paper, on a compact K\"ahler surface the natural \(2\)-dimensional competitors are complex curves, and their areas are encoded cohomologically by the K\"ahler class.

In the PSC K\"ahler setting, the existence of such curves is moreover forced by the geometry itself: positive scalar curvature implies uniruledness, and hence the presence of many rational curves. This is the key difference from the general Riemannian situation. For PSC K\"ahler surfaces, one has canonical \(2\)-dimensional representatives and a rigid intersection-theoretic way to measure their areas. This is precisely what makes the holomorphic \(2\)-systole
and the functional \(\mathcal J_X\) the right objects for our problem.
\subsection*{Organization of the paper}

In Section~\ref{sec:preliminaries} we review the basic notions used throughout the paper. We also show that, for projective Kähler surfaces with $c_1(X)\cdot [\omega]>0$, the functional $\mathcal{J}_X$ is well-defined and satisfies
$\sup_{\mathcal K^+(X)}\mathcal{J}_X<\infty$.

Section~\ref{sec:P2} is devoted to the case where the minimal model of $X$ is $\PP^2$. We apply the mass-shift argument to obtain an upper bound for \(\mathcal{J}_{\blk \PP^2}\). This yields the sharp bound $\min_X S(\omega) \cdot \syst_2(\omega) \le12\pi$, together with the rigid model \((\PP^2,\omega_{\text{FS}})\).

In Section~\ref{sec:ruled} we study ruled surfaces from an algebro-geometric perspective. We first treat the non-rational ruled case, which is technically simpler in our approach, and show that $\min_X S(\omega) \cdot \syst_2(\omega) \le 4\pi$, together with the corresponding rigid model. We then recall the geometry of Hirzebruch surfaces and their blow-ups, and adapt the mass-shift argument to the rational ruled case. This leads to the sharp bound $\min_X S(\omega) \cdot \syst_2(\omega) \le 8\pi$ for all PSC K\"ahler surfaces fibred over $\PP^1$, with rigidity characterized by the product Fubini-Study metric on $\PP^1\times\PP^1$.

Section~\ref{sec:level-set} provides an alternative proof in the non-rational ruled case. We introduce the level-set method for holomorphic fibrations $f\colon X\to B$ when the base curve $B$ admits a metric of non-positive Gaussian curvature.
\subsection*{Acknowledgment.}~~The author is deeply grateful to his advisor, Professor Gérard Besson, for introducing him to the study of positive scalar curvature and for his constant encouragement. The author also thanks Jian Wang for valuable comments on an earlier version of the manuscript.
\section{Preliminaries}\label{sec:preliminaries}

Let \((X^n,\omega)\) be a compact K\"ahler manifold of complex dimension \(n\). Locally we write
\[
\omega=\sqrt{-1}\,g_{i\bar{j}}\,dz_i\wedge d\bar{z}_j.
\]
We denote by
\[
H^{1,1}(X;\RR):=H^{1,1}(X)\cap H^2(X;\RR)
\]
the space of real \((1,1)\)--classes, and by
\[
\mathcal{K}(X):=\{[\omega]\in H^{1,1}(X;\RR)\mid [\omega]>0\}
\]
the K\"ahler cone of \(X\). It is an open convex cone in the finite-dimensional real vector space \(H^{1,1}(X;\RR)\).

The (Chern--)Ricci form is defined by
\[
  \ric(\omega):=-\sqrt{-1}\,\partial\bar{\partial}\log\det(g_{i\bar{j}}).
\]
It is a real closed \((1,1)\)--form representing the first Chern class:
\[
  c_1(X)=\frac{1}{2\pi}[\ric(\omega)]\in H^{1,1}(X;\RR).
\]
The (Chern--)scalar curvature \(S(\omega)\) is then determined by
\[
  S(\omega):=\frac{n\,\ric(\omega)\wedge\omega^{n-1}}{\omega^n}.
\]
In particular, the \emph{normalized average scalar curvature}
\[
  \hat{S}([\omega])
  := \frac{2n\pi\,c_1(X)\cup[\omega]^{n-1}}{[\omega]^n}
\]
depends only on the K\"ahler class \([\omega]\). Clearly,
\[
\min_X S(\omega)\le \hat{S}([\omega]),
\]
with equality if and only if \(\omega\) is cscK.

Throughout the paper we write \(g_\omega\) for the Riemannian metric associated to \(\omega\), and denote by
\[
  \syst_2(\omega):=\syst_2(g_\omega)
\]
the \(2\)-systole. In contrast, the quantity that we will actually compute is the following class-dependent holomorphic \(2\)-systole:
\[
  \sysh_2([\omega])
  := \inf\bigl\{[\omega]\cdot C \ \big|\
      C\subset X \text{ an effective curve},\ [C]\neq 0 \text{ in } H_2(X;\ZZ)\bigr\}.
\]
Here an effective curve \(C=\sum_i m_i C_i\) is viewed as an integral real \(2\)--cycle, and we set
\[
  [\omega]\cdot C:=\sum_i m_i\int_{C_i^{\mathrm{reg}}}\omega.
\]
By abuse of notation, we do not distinguish a curve from its homology class when writing such pairings.

In particular, every effective curve determines a nontrivial integral \(2\)--cycle, and therefore contributes a competitor in the definition of the \(2\)-systole. Hence we always have
\begin{equation}\label{eq:syst-vs-sysh-notations}
  \syst_2(\omega)\le \sysh_2([\omega]).
\end{equation}
Note that \(\sysh_2([\omega])\) depends only on the K\"ahler class \([\omega]\), whereas \(\syst_2(\omega)\) a priori depends on the specific metric in that class.

The basic scale-invariant functional that we will work with is the function
\[
  \mathcal{J}_X:\mathcal{K}(X)\to\RR,\qquad
  \mathcal{J}_X([\omega])
  := \sysh_2([\omega])\cdot\hat{S}([\omega]).
\]
Equivalently,
\[
  \mathcal{J}_X([\omega])
  = \sysh_2([\omega])\cdot\frac{2n\pi\,c_1(X)\cup[\omega]^{n-1}}{[\omega]^n}.
\]
For K\"ahler surfaces (\(n=2\)) this becomes
\[
  \mathcal{J}_X([\omega])
  = \sysh_2([\omega])\cdot\frac{4\pi\,c_1(X)\cdot[\omega]}{[\omega]^2}.
\]
Since \(\sysh_2(\lambda[\omega])=\lambda\,\sysh_2([\omega])\) and
\(\hat{S}(\lambda[\omega])=\lambda^{-1}\hat{S}([\omega])\) for \(\lambda>0\), the
value of \(\mathcal{J}_X([\omega])\) depends only on the ray
\(\RR_{>0}\,[\omega]\subset\mathcal{K}(X)\).

In this paper we will only consider K\"ahler classes \([\omega]\) with
\[
  c_1(X)\cdot[\omega]>0,
\]
since these are the classes that may contain PSC K\"ahler metrics. We denote the corresponding subcone by
\[
  \mathcal{K}^{+}(X):=\{[\omega]\in\mathcal{K}(X)\mid c_1(X)\cdot[\omega]>0\}.
\]

\begin{lemma}\label{lem:sys=1_normalization}
Let \(V\) be a real vector space and let \(K\subset V\) be an open cone. Let
\(f:K\to(0,\infty)\) and \(J:K\to\mathbb{R}\) be functions satisfying
\[
  f(\lambda x)=\lambda f(x),\qquad J(\lambda x)=J(x)
\]
for all \(x\in K\) and all \(\lambda>0\). Set
\[
K_1:=\{x\in K \mid f(x)=1\}.
\]
Assume that \(K_1\neq\varnothing\). Then
\[
  \sup_{x\in K} J(x) \;=\; \sup_{x\in K_1} J(x).
\]
\end{lemma}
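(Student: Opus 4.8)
The plan is to exploit the two homogeneity hypotheses directly: $f$ is positively homogeneous of degree one and $J$ is scale-invariant (homogeneous of degree zero). Geometrically, $K$ is swept out by rays $\RR_{>0}\,x$, the function $J$ is constant along each such ray, and the degree-one homogeneity of $f$ guarantees that every ray meets the slice $K_1=\{f=1\}$ in exactly one point. Hence the set of values $\{J(x)\mid x\in K\}$ coincides with $\{J(x)\mid x\in K_1\}$, and the two suprema agree.

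First I would record the trivial inclusion $K_1\subseteq K$, which immediately yields $\sup_{x\in K_1}J(x)\le\sup_{x\in K}J(x)$. For the reverse inequality I would normalize an arbitrary point onto the slice. Fix $x\in K$; since $f(x)>0$ by hypothesis, the scalar $\lambda:=1/f(x)$ is well defined and positive. Because $K$ is a cone, $\lambda x\in K$, and by degree-one homogeneity $f(\lambda x)=\lambda f(x)=1$, so in fact $\lambda x\in K_1$. Scale-invariance then gives $J(x)=J(\lambda x)\le\sup_{y\in K_1}J(y)$. As $x\in K$ was arbitrary, taking the supremum over $x$ produces $\sup_{x\in K}J(x)\le\sup_{y\in K_1}J(y)$, and combining the two inequalities proves the lemma.

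The only points requiring a moment's care are precisely the two standing hypotheses: that $f$ is strictly positive, so that $\lambda=1/f(x)$ makes sense and is positive, and that $K$ is genuinely a cone, so that the rescaled point $\lambda x$ still lies in the domain on which $f$ and $J$ are defined. Both are built into the statement, and the assumption $K_1\neq\varnothing$ simply ensures the slice over which we take the supremum is nonempty. There is no substantive obstacle here: this is a routine normalization statement, recorded so that later arguments may restrict the optimization of $\mathcal{J}_X$ to the affine slice $\{\,\sysh_2([\omega])=1\,\}$ without loss of generality.
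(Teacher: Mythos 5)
Your proof is correct and follows essentially the same route as the paper: both normalize an arbitrary $x\in K$ to $x/f(x)\in K_1$ (the paper packages this as a map $\Phi(x)=x/f(x)$ with $\Phi(K)=K_1$) and then use the scale-invariance of $J$ to conclude. No gaps; the argument is complete.
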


\begin{proof}
Define a map \(\Phi:K\to K\) by
\[
  \Phi(x):=\frac{x}{f(x)},\qquad x\in K.
\]
This is well-defined because \(f(x)>0\) for every \(x\in K\), and the conic property
of \(K\) implies \(\Phi(x)\in K\). Moreover,
\[
  f(\Phi(x)) = f\!\left(\frac{x}{f(x)}\right)
  = \frac{1}{f(x)}\,f(x)
  = 1,
\]
so that \(\Phi(K)\subset K_1\).

Conversely, if \(y\in K_1\), then \(f(y)=1\) and \(y=\Phi(y)\). Thus \(\Phi(K)=K_1\).
Finally, for every \(x\in K\) we have
\[
  J(\Phi(x))=J\!\left(\frac{x}{f(x)}\right)=J(x).
\]
Therefore
\[
  \sup_{x\in K} J(x)
  = \sup_{x\in K} J(\Phi(x))
  = \sup_{y\in \Phi(K)} J(y)
  = \sup_{y\in K_1} J(y),
\]
which is the desired equality.
\end{proof}

\begin{theorem}\label{thm:finite-J}
Let \((X,\omega)\) be a projective K\"ahler surface with $\mathcal K^+(X)\neq \emptyset$. Then:
\begin{enumerate}
    \item For every $[\omega]\in \mathcal K^+(X)$ and $\omega \in [\omega]$ with positive scalar curvature, we have
    \(\min_X S(\omega)\cdot \syst_2(\omega)\le \mathcal{J}_X([\omega])\), with equality if and only if \(\omega\) is cscK and \(\syst_2(\omega)\) is realized by a holomorphic \(1\)-cycle;
    \item
    \(\sup_{[\omega]\in\mathcal{K}^+(X)} \mathcal{J}_X([\omega])<+\infty\).
\end{enumerate}
\end{theorem}

\begin{proof}
Part~(1) follows immediately from
\[
\min_X S(\omega)\le \hat S([\omega])
\quad\text{and}\quad
\syst_2(\omega)\le \sysh_2([\omega]),
\]
with equality precisely when \(\omega\) is cscK and \(\syst_2(\omega)\) is realized by a holomorphic \(1\)-cycle. We therefore prove only Part~(2).

Set \(\mathcal K_1:=\{[\omega]\in \mathcal K^+(X)\mid \sysh_2([\omega])=1\}\). By Lemma~\ref{lem:sys=1_normalization},
\[
\sup_{[\omega]\in \mathcal K^+(X)} \mathcal J_X([\omega])
=
\sup_{[\omega]\in \mathcal K_1} \mathcal J_X([\omega]).
\]
We first show that \(\overline{\mathcal K_1}\subset \mathcal K(X)\). Assume for contradiction that there exists a sequence of K\"ahler classes \([\omega_\varepsilon]\in \mathcal K_1\) such that \([\omega_\varepsilon]\to [\omega_0]\in \partial\mathcal K(X)\).

\emph{Claim.} There exists an effective curve \(C\subset X\) such that
\[
[\omega_0]\cdot C=0.
\]

Granting the claim, we obtain
\[
1=\sysh_2([\omega_\varepsilon])\le [\omega_\varepsilon]\cdot C\to [\omega_0]\cdot C=0,
\]
a contradiction. Hence \(\overline{\mathcal K_1}\subset \mathcal K(X)\).

Now fix a Euclidean norm \(\|\cdot\|\) on the finite-dimensional real vector space \(V:=H^{1,1}(X;\RR)\),
and let \(B:=\{\alpha\in V\mid \|\alpha\|=1\}\). Set \(S_1:=\overline{\mathcal K_1}\cap B\). Then \(S_1\subset \mathcal K(X)\) is compact. Consider the continuous function \(F:\alpha \mapsto\alpha^2\) on $V$. Since \(S_1\subset \mathcal K(X)\), we have \(F>0\) on \(S_1\), and therefore
\[
m:=\min_{S_1}F>0,\qquad M:=\max_{S_1}F<\infty.
\]
For any \(\alpha\in \mathcal K_1\), define
\[
u:=\frac{\alpha}{\|\alpha\|}\in S_1.
\]
Then \(\alpha^2=\|\alpha\|^2u^2\), so 
\[
m\,\|\alpha\|^2\le \alpha^2\le M\,\|\alpha\|^2.
\]

Since \(\alpha\mapsto c_1(X)\cdot \alpha\) is a continuous linear functional on \(V\), there exists \(C>0\) such that
\[
|c_1(X)\cdot \alpha|\le C\|\alpha|
\qquad\text{for all }\alpha\in V.
\]
Hence
\[
\left|\frac{c_1(X)\cdot \alpha}{\alpha^2}\right|
\le
\frac{C\|\alpha\|}{m\|\alpha\|^2}
=
\frac{C'}{\|\alpha\|}.
\]

It remains to show that \(\|\alpha\|\) is bounded away from \(0\) on \(\mathcal K_1\). Take any effective curve \(F\). Since \(\alpha\mapsto \alpha\cdot F\) is a continuous linear functional on \(V\), there exists \(\widetilde C>0\) such that
\[
|\alpha\cdot F|\le \widetilde C\,\|\alpha\|
\qquad\text{for all }\alpha\in V.
\]
On the other hand, for every \(\alpha\in \mathcal K_1\), we have
\[
1=\sysh_2(\alpha)\le \alpha\cdot F.
\]
Therefore
\[
1\le \alpha\cdot F\le \widetilde C\,\|\alpha\|,
\]
so \(\|\alpha\|\ge \widetilde C^{-1}\) on \(\mathcal K_1\). This proves Part~(2).
\end{proof}

\begin{proof}[Proof of the claim]
Set \(\alpha_0:=[\omega_0]\in \partial\mathcal K(X)\). Since \(\alpha_0\) is nef, we split into two cases.

\smallskip
\noindent\underline{Case 1: \(\alpha_0^2>0\).}
If \(\alpha_0\cdot C>0\) for every irreducible curve \(C\subset X\), then \(\alpha_0\) is ample by the Nakai--Moishezon criterion for surfaces, contradicting \(\alpha_0\in\partial\mathcal K(X)\). Hence there exists an irreducible curve \(C\subset X\) such that
\[
\alpha_0\cdot C=0.
\]

\smallskip
\noindent\underline{Case 2: \(\alpha_0^2=0\).}
We first show that \(c_1(X)\cdot \alpha_0=0\). Assume, for contradiction, that \(c_1(X)\cdot \alpha_0>0\). Fix \(t>3\) and set
\[
D_t:=K_X+t\alpha_0.
\]
We claim that \(D_t\) is nef.

By the Mori's Cone Theorem for smooth projective surfaces (see, for instance, \cite[Theorem 7.49]{debarre2001higher}), there exist countably many rational curves \(C_i\) on \(X\) such that
\[
0<-K_X\cdot C_i\le 3,
\]
and
\[
\overline{\NE}(X)
=
\overline{\NE}(X)_{K_X\ge 0}
+\sum_i \RR_{\ge 0}[C_i],
\]
where
\[
\overline{\NE}(X)_{K_X\ge 0}
:=
\{\gamma\in \overline{\NE}(X)\mid K_X\cdot \gamma\ge 0\}.
\]
For \(\gamma\in \overline{\NE}(X)_{K_X\ge 0}\) we have
\[
D_t\cdot \gamma = K_X\cdot\gamma + t\,\alpha_0\cdot\gamma \ge 0
\]
since \(\alpha_0\) is nef.
For a \(K_X\)-negative extremal curve \(C_i\), we have
\[
D_t\cdot C_i = K_X\cdot C_i + t\,\alpha_0\cdot C_i \ge -3 + t >0.
\]
Hence \(D_t\) is nef.

Since both \(D_t\) and \(\alpha_0\) are nef, we have \(D_t\cdot\alpha_0\ge 0\).
On the other hand, \(\alpha_0^2=0\) gives
\[
0 \le D_t\cdot\alpha_0 = (K_X+t\alpha_0)\cdot \alpha_0 = K_X\cdot\alpha_0 = -c_1(X)\cdot\alpha_0 <0,
\]
which is a contradiction. Therefore, \(c_1(X)\cdot\alpha_0=0\).

Since \(\alpha_0\neq 0\) is nef with \(\alpha_0^2=0\), the Hodge index theorem implies that the intersection form is non-positive on \(\alpha_0^\perp\), and
\[
D_t\cdot\alpha_0=(K_X+t\alpha_0)\cdot\alpha_0=K_X\cdot\alpha_0=-c_1(X)\cdot\alpha_0=0,
\]
we obtain \(D_t^2\le 0\). On the other hand, \(D_t\) is nef, hence \(D_t^2\ge 0\). Therefore \(D_t^2=0\). The equality case in the Hodge index theorem now implies that \(D_t\) is numerically proportional to \(\alpha_0\), namely \(D_t\equiv \lambda_t\,\alpha_0\) for some \(\lambda_t\in \RR\). Hence
\[
-K_X\equiv a\,\alpha_0
\]
for some \(a\in \RR\). Since \(\mathcal K^+(X)\neq\emptyset\), we may choose \(\beta\in \mathcal K^+(X)\). Then
\[
(-K_X)\cdot \beta = c_1(X)\cdot \beta>0,
\]
while \(\alpha_0\cdot \beta>0\) because \(\alpha_0\) is nef and nonzero. Therefore \(a>0\). In particular, \(X\) admits a K\"ahler class of positive total scalar curvature. By \cite{heier2012scalar}, \(X\) is uniruled. Hence \(\kappa(X)=-\infty\). By the Enriques--Kodaira classification of projective surfaces, the minimal model of \(X\) is therefore either rational or ruled  (see, for example, \cite[Chapter~V]{BHPV04}).

If the minimal model of \(X\) is ruled over a curve of genus \(g\), then by the canonical divisor formula for ruled surfaces
\[
K_X^2=8(1-g)
\]
on the minimal model (see \cite[Chapter~V, \S2]{Hartshorne77}). Since each blow-up decreases \(K^2\) by \(1\), and since here
\[
K_X^2=(-K_X)^2=a^2\alpha_0^2=0,
\]
it follows that necessarily \(g=1\) and \(X\) is already minimal. Thus the ruled case reduces to a minimal ruled surface over an elliptic curve. We now treat the two possibilities separately.

\noindent\emph{Subcase 2a: \(X\) is rational.}
Since \(X\) is rational, \(\chi(\mathcal O_X)=1\). Applying Riemann--Roch theorem to \(D=-K_X\), we get
\[
\chi(\mathcal O_X(-K_X))
=
\chi(\mathcal O_X)+\frac{(-K_X)^2-K_X\cdot(-K_X)}{2}
=
1+\frac{2K_X^2}{2}
=
1.
\]
By Serre duality,
\[
h^2(X,-K_X)=h^0(X,2K_X)=0,
\]
since \(\kappa(X)=-\infty\). Hence
\[
h^0(X,-K_X)-h^1(X,-K_X)=1,
\]
and in particular
\[
h^0(X,-K_X)\ge 1.
\]
Thus there exists a nonzero effective divisor \(D\in |-K_X|\). Moreover,
\[
\alpha_0\cdot D = \frac{1}{a}(-K_X)\cdot D
= \frac{1}{a}(-K_X)^2 =0.
\]
Write \(D=\sum_j m_jD_j\) as a sum of irreducible components. Since \(\alpha_0\) is nef, each \(\alpha_0\cdot D_j\ge 0\), and the equality \(\alpha_0\cdot D=0\) forces \(\alpha_0\cdot D_j=0\) for every component \(D_j\). Thus any irreducible component of \(D\) gives the desired curve.

\noindent\emph{Subcase 2b: \(X\) is ruled over an elliptic curve.}
Since \(X\) is a minimal ruled surface over an elliptic curve, let \(C_0\) be a section of minimal self-intersection and \(F\) be a fibre (as in Section \ref{sec:ruled}). Then
\[
C_0^2=-e,\qquad e\ge -1,\qquad -K_X\equiv 2C_0+eF.
\]
Because \(-K_X\) is nef, we have
\[
0\le (-K_X)\cdot C_0=(2C_0+eF)\cdot C_0=2(-e)+e=-e.
\]
Hence \(e= 0\) or \(-1\). 

If \(e=0\), then \(-K_X\equiv 2C_0\equiv a\,\alpha_0\),
and therefore
\[
\alpha_0\cdot C_0=\frac1a(-K_X)\cdot C_0=\frac2a\,C_0^2=0.
\]
Thus \(C_0\) is the required irreducible effective curve.

If $e=-1$, then $-K_X\equiv 2C_0-F$. By \cite[Proposition~3.2]{GallegoPurnaprajna1996}
(see also \cite[Corollary~2.2]{Homma1982}), there exists a smooth elliptic curve
$E\subset X$ such that $E\equiv 2C_0-F$. Hence
\[
\alpha_0\cdot E
=
\frac1a(-K_X)\cdot E
=
\frac1a(-K_X)^2
=
0.
\]
Thus $E$ is the desired irreducible effective curve. This completes the proof in Case~2.
\end{proof}

Blow-ups will play a central role in what follows. Let \(\pi\colon\tilde{X}\to X\)
be the blow-up of \(X\) at a point \(p\), with exceptional divisor \(E\). Given a
K\"ahler class \([\omega]\) on \(X\), we consider the family of classes on \(\tilde{X}\)
\[
  [\omega_t]:=\pi^*[\omega]-t[E],\qquad t\ge0.
\]
The Seshadri constant of \([\omega]\) at \(p\) is defined by
\[
  \varepsilon([\omega];p)
  :=\sup\{\,t\ge0\mid \pi^*[\omega]-t[E]\ \text{is nef}\,\}
  =\inf_{C\ni p}\frac{[\omega]\cdot C}{\mathrm{mult}_p(C)},
\]
where the infimum is taken over irreducible curves \(C\subset X\) passing through
\(p\), and \(\mathrm{mult}_p(C)\) denotes the multiplicity of \(C\) at \(p\). In
particular, for every \(0<t<\varepsilon([\omega];p)\), the class \([\omega_t]\) is
K\"ahler on \(\tilde{X}\).

Brown's result \cite[Theorem~A]{Brown24} shows that if \([\omega]\) contains a PSC
K\"ahler metric, then for all sufficiently small \(t>0\) the classes
\[
[\omega_t]=\pi^*[\omega]-t[E]
\]
on the blow-up \(\tilde{X}\) also contain PSC K\"ahler metrics. Therefore we may estimate \(\mathcal{J}_X\) along such families and, ultimately, obtain uniform bounds for \(\mathcal{J}_X\) on PSC K\"ahler classes arising from \(\PP^2\) and ruled surfaces under finitely many blow-ups.
\section{The systolic inequality on $\mathbb{P}^2$ and its blow-up} \label{sec:P2}

In this section we study the $2$-systolic inequality on $\PP^2$ and on its blow-up
$\blk\PP^2$ at $k$ points. On $\PP^2$ we write $H$ for the hyperplane class, so that the Néron–Severi group is
\[
NS^1(\PP^2;\RR)=\langle H\rangle,\qquad H^2=1.
\]
Any Kähler class on $\PP^2$ is of the form \([\omega]=aH,~ a>0\), and every effective curve class is a positive multiple of $H$. In particular, the holomorphic $2$-systole is
\[
\sysh_2([\omega])=\inf_C[\omega]\cdot[C]=aH\cdot H=a,
\]
while
\[
c_1(\PP^2)=3H,\qquad c_1(\PP^2)\cdot[\omega]=3a,\qquad [\omega]^2=a^2.
\]
Hence
\begin{equation}\label{eq:P2-12pi}
\mathcal{J}_{\PP^2}([\omega])
=4\pi\,\sysh_2([\omega])\,\frac{c_1(\PP^2)\cdot[\omega]}{[\omega]^2}
=4\pi\cdot a\cdot\frac{3a}{a^2}
=12\pi.
\end{equation}
We now pass to the blow-up $X_k:=\blk\PP^2$ at $k$ points. The Néron–Severi group of $X_k$ is
\[
NS^1(X_k;\RR)=\langle H,E_1,\dots,E_k\rangle,
\]
where $H$ denotes the pullback of the hyperplane class and $E_i$ the exceptional divisors, with
\[
H^2=1,\qquad E_i^2=-1,\qquad H\cdot E_i=E_i\cdot E_j=0\ (i\neq j).
\]
In particular, any Kähler class can be written as
\[
[\omega]=aH-\sum_{i=1}^k t_iE_i,\qquad a,t_i\in\RR_{>0},
\]
while a curve class in $H_2(X_k;\ZZ)$ is written
\[
[C]=dH-\sum_{i=1}^k m_iE_i,\qquad d,m_i\in\ZZ,
\]
where we tacitly identify $H^2$ and $H_2$ via Poincaré duality.

In view of \eqref{eq:P2-12pi}, our goal is to show that
\[
\mathcal{J}_{X_k}([\omega])<12\pi
\]
for every Kähler class $[\omega]$ on $X_k$ and every $k\ge1$. Since \(\mathcal{J}_{X_k}\) is defined purely in terms of intersection numbers, the problem is intrinsically finite-dimensional. However, the complexity of the Mori cone $\overline{\mathrm{NE}}(X_k)$ increases rapidly with $k$: for large $k$ there are many extremal rays, and for $k\ge9$ (in the non–del Pezzo regime) the cone is not even finitely generated. A direct ray–by–ray analysis of all effective curve classes is therefore hopeless.

Instead of keeping track of each coefficient $t_i$ separately, it is convenient to work
with the aggregate quantity
\[
S:=\sum_{i=1}^k t_i.
\]
Once a Kähler class \([\omega]=aH-\sum t_iE_i\) is fixed, the numbers $t_i>0$ determine $S$, and the nef cone imposes a genuine geometric upper bound on $S$ which does not depend on the particular choice of coordinates. Moreover, on $X_k$ one has
\[
[\omega]^2 = a^2 - \sum_{i=1}^k t_i^2,\qquad
c_1(X_k)\cdot[\omega]
=\left(3H-\sum_{i=1}^kE_i\right)\cdot\left(aH-\sum_{i=1}^k t_iE_i\right)
=3a - \sum_{i=1}^k t_i,
\]
so that, when $\sysh_2([\omega])$ is under control, $\mathcal{J}_{X_k}([\omega])$ depends on the tuple $(t_1,\dots,t_k)$ only through
\[
S=\sum_{i=1}^k t_i,\qquad
Q:=\sum_{i=1}^k t_i^2.
\]
For a given Kähler class we set \(m:=\sysh_2([\omega])\). In particular, since each exceptional curve \(E_i\) is effective and \([\omega]\cdot E_i=t_i\), we always have \(m\le t_i\) for all \(i\). Thus, if we fix $(a,m,S)$ and vary the individual $t_i$ subject to
\[
t_i\ge m,\qquad \sum_{i=1}^k t_i=S,
\]
then $a$ and $S$ are held fixed while $\mathcal{J}_{X_k}([\omega])$ changes only through the quantity $Q$ appearing in $[\omega]^2$. In particular, for fixed $(a,m,S)$ the maximal possible value of $\mathcal{J}_{X_k}([\omega])$ is attained when $Q$ is as large as allowed by these constraints. This leads to the following elementary ``mass–shift'' property, which will play a crucial role not only in the present section but also in our later analysis of Hirzebruch surfaces, i.e. geometrically ruled surfaces over \(\PP^1\).
\begin{proposition}\label{prop:mass-shift}
Let $k\ge2$, $m>0$, and $S\ge km$. Consider
\[
\Omega=\Bigl\{t=(t_1,\dots,t_k)\in\mathbb{R}^k:\ t_i\ge m\ \text{for all }i,\ \sum_{i=1}^k t_i=S\Bigr\}.
\]
For $Q(t):=\sum_{i=1}^k t_i^2$ one has
\[
\sup_{t\in\Omega} Q(t)=(k-1)m^2+\bigl(S-(k-1)m\bigr)^2,
\]
and the supremum is attained exactly (up to permutation of coordinates) at
\[
t=(\underbrace{m,\dots,m}_{k-1},\,S-(k-1)m).
\]
\end{proposition}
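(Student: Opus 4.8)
The plan is to exploit the fact that $Q$ is a strictly convex function being maximized over a compact convex polytope, so that the maximum is forced onto the extreme points of $\Omega$; the concrete mechanism is a pairwise ``mass-shift'' that strictly increases $Q$ whenever two coordinates both exceed the floor $m$. First I would record that $\Omega$ is compact: it is closed, and the constraints $t_i\ge m$ together with $\sum_j t_j=S$ give $m\le t_i\le S-(k-1)m$ for every $i$, so $\Omega$ is bounded. Hence the continuous function $Q$ attains its maximum on $\Omega$, and the supremum is a genuine maximum.

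The heart of the argument is the following elementary transfer computation. Suppose $t\in\Omega$ has two indices $i\neq j$ with $t_i\ge t_j>m$. Set $\delta:=t_j-m>0$ and move this mass from the $j$-th to the $i$-th coordinate, i.e.\ replace $(t_i,t_j)$ by $(t_i+\delta,\,m)$ and leave all other entries unchanged. The sum is preserved and the new entries still satisfy the floor constraint, so the new vector $t'$ lies in $\Omega$, and
\[
  Q(t')-Q(t)=(t_i+\delta)^2+(t_j-\delta)^2-t_i^2-t_j^2=2\delta(t_i-t_j)+2\delta^2>0.
\]
Thus each such transfer strictly increases $Q$ while reducing by one the number of coordinates that are strictly larger than $m$.

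Iterating this step at most $k-1$ times, I would drive all but one coordinate down to the floor value $m$; the remaining coordinate is then forced by the constraint $\sum_i t_i=S$ to equal $S-(k-1)m$, which is $\ge m$ precisely because $S\ge km$, so the resulting vector $t^{*}=(m,\dots,m,\,S-(k-1)m)$ is feasible. This yields the claimed value
\[
  Q(t^{*})=(k-1)m^2+\bigl(S-(k-1)m\bigr)^2,
\]
and shows it is an upper bound for $Q$ on $\Omega$.

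For the uniqueness (``exactly, up to permutation''), the strict inequality in the transfer computation is decisive: any feasible $t$ that is not a permutation of $t^{*}$ has at least two coordinates strictly above $m$, hence admits a strictly improving transfer and cannot be a maximizer. The only points immune to every transfer are precisely the permutations of $t^{*}$, all of which give the same value of $Q$. I do not expect a genuine obstacle here; the main care is purely bookkeeping --- checking feasibility of each intermediate vector and confirming that the strictness of the transfer inequality upgrades the bound into the sharp uniqueness statement. Equivalently, one may phrase this structurally as: $Q$ is convex, $\Omega$ is a compact convex polytope whose vertices are exactly the permutations of $t^{*}$, and a convex function attains its maximum at a vertex.
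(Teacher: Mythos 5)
Your proposal is correct and follows essentially the same route as the paper: the identical pairwise transfer computation $Q(t')-Q(t)=2\delta(t_i-t_j)+2\delta^2>0$ is the paper's "mass-shift" step, used there to show a maximizer (which exists by compactness) can have at most one coordinate above $m$, with uniqueness following exactly as you argue. Your iteration-to-the-vertex variant and the closing convexity/polytope remark are pleasant supplementary framings, but they do not change the substance of the argument.
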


\begin{proof}
Since $S\ge km$, the point $(S/k,\dots,S/k)$ belongs to $\Omega$, so $\Omega\neq\emptyset$. In particular, $\Omega$ is compact in $\mathbb{R}^k$, and since $Q$ is continuous, $Q$ attains its maximum on $\Omega$. Let $t=(t_1,\dots,t_k)\in\Omega$ be a maximizer.

We first show that at most one coordinate of $t$ is strictly larger than $m$. Suppose by contradiction that there exist $i\neq j$ with
\[
t_i\ge t_j>m.
\]
Choose $\delta\in(0,\,t_j-m]$ and define
\[
t_i' = t_i+\delta,\qquad
t_j' = t_j-\delta,\qquad
t_\ell' = t_\ell\quad(\ell\notin\{i,j\}).
\]
Then $t'\in\Omega$, and
\[
\begin{aligned}
Q(t')-Q(t)
&= (t_i+\delta)^2+(t_j-\delta)^2 - (t_i^2+t_j^2) \\
&= 2\delta(t_i-t_j)+2\delta^2 \;\ge\; 2\delta^2>0,
\end{aligned}
\]
which contradicts the maximality of $t$. Hence at most one coordinate of $t$ exceeds $m$.

Since all coordinates satisfy $t_i\ge m$, it follows that exactly $k-1$ coordinates are equal to $m$, and the remaining one equals $S-(k-1)m$. The condition $S\ge km$ ensures $S-(k-1)m\ge m$, so such a point lies in $\Omega$. Evaluating $Q$ there gives
\[
Q(t)=(k-1)m^2+\bigl(S-(k-1)m\bigr)^2.
\]
Thus
\[
\sup_{s\in\Omega}Q(s)=Q(t)=(k-1)m^2+\bigl(S-(k-1)m\bigr)^2.
\]

Finally, if $t\in\Omega$ satisfies
\[
Q(t)=(k-1)m^2+\bigl(S-(k-1)m\bigr)^2,
\]
then $t$ is a maximizer, and the above argument shows that (up to permutation) $t$ must have the stated form. The case $S=km$ corresponds to $S-(k-1)m=m$, i.e. $t_i=m$ for all $i$.
\end{proof}

In the numerical optimization below, we enlarge the geometric problem to a coarser numerical one. More precisely, we deliberately \emph{forget} the full dependence of \(m=\sysh_2([\omega_t])\) on the vector \(t=(t_1,\dots,t_k)\), and retain only the necessary inequalities forced by the effective curves \(E_i\) and \(H-E_i-E_j\). This produces a larger numerical feasible region, so any upper bound obtained there automatically yields an upper bound for the geometric supremum of \(\mathcal{J}_{X_k}\). Namely, for every K\"ahler class $\omega_t$ we have
\[
m \le \omega_t\!\cdot E_i=t_i,
\qquad
m \le \omega_t\!\cdot(H-E_i-E_j)=a-t_i-t_j,
\]
and in particular $0<m\le a/3$ and $S=\sum_i t_i \ge km$. We then rewrite
\[
\mathcal{J}_{X_k}([\omega_t])
=4\pi\,m\,\frac{3a-S}{a^2-Q}
=:4\pi\,\phi_k(m,S,Q),
\qquad Q=\sum_{i=1}^k t_i^2.
\]

Fix $(a,m,S)$ in the above coarse admissible region and consider all vectors $t=(t_1,\dots,t_k)$ satisfying only the numerical conditions
\[
t_i\ge m,\qquad \sum_{i=1}^k t_i=S,
\]
which are necessary for any geometric class with systole $m$ and sum $S$. Since the factor $3a-S$ is fixed and $a^2-Q>0$ on the K\"ahler cone, the map $Q\mapsto m(3a-S)/(a^2-Q)$ is increasing in $Q$. Therefore, for any \emph{geometric} vector $t$ with given $(m,S)$ we obtain the upper bound
\[
\phi_k(m,S,Q(t)) \le \phi_k\bigl(m,S,Q_{\max}(m,S)\bigr),
\]
where $Q_{\max}(m,S)$ denotes the maximal possible value of $\sum t_i^2$ under the constraints $t_i\ge m$ and $\sum t_i=S$.

By Proposition~\ref{prop:mass-shift}, this maximum is achieved at the extremal vector
\[
t^*=(m,\dots,m,\,S-(k-1)m),
\]
and hence
\[
Q_{\max}(m,S)=(k-1)m^2+\bigl(S-(k-1)m\bigr)^2.
\]
Consequently, for every K\"ahler class $\omega_t$ we have the numerical estimate
\[
\mathcal{J}_{X_k}([\omega_t])
\le 4\pi\,m\,\frac{3a-S}{a^2-Q_{\max}(m,S)}.
\]
In particular, to bound $\sup\mathcal{J}_{X_k}$ from above, it suffices to maximize the right-hand side over the coarse admissible region in $(m,S)$ determined above, reducing the problem to a purely numerical optimization.

\begin{lemma}\label{lem:Pk-strict}
Let \(X_k:=\bl_k\PP^2\), and let \([\omega_t]=aH-\sum_{i=1}^k t_iE_i\in \mathcal K^+(X_k)\), where \(a>0\) and \(k\ge1\). Then
\[
\mathcal{J}_{X_k}([\omega_t])<\mathcal{J}_{\PP^2}(aH)=12\pi.
\]
\end{lemma}

\begin{proof}
We first treat the case $k=1$. In this situation the Kähler class is \([\omega_t]=aH-tE\), and a direct computation (see Example~\ref{ex:J1-J2}) shows that
\[
\mathcal{J}_{\bl_1\PP^2}([\omega_t])
=4\pi\,\min\{a-t,t\}\,\frac{3a-t}{a^2-t^2}
\;\le\;\frac{20\pi}{3}<12\pi.
\]
Thus, it remains to consider $k\ge2$. 

For $k\ge2$, the exceptional curves $E_i$ and the strict transforms of lines through two points, with classes $H-E_i-E_j$ ($1\le i\ne j\le k$), are effective.
Hence
\[
\omega_t\cdot E_i=t_i>0,\qquad
\omega_t\cdot(H-E_i-E_j)=a-t_i-t_j>0,
\]
are candidates for the holomorphic $2$-systole. Set \(m:=\sysh_2([\omega_t]) \le  \min\{t_i,\ a-t_i-t_j:\ 1\le i\ne j\le k\}\).
Then
\[
\mathcal{J}_{X_k}([\omega_t])
= 4\pi\,m\,\frac{3a-S}{a^2-Q}.
\]
For convenience, we denote the corresponding numerical factor by
\[
\phi_k(t):=m\,\frac{3a-S}{a^2-Q},
\]
where \(m=\sysh_2([\omega_t])\), \(S=\sum_i t_i\), and \(Q=\sum_i t_i^2\). We now estimate \(\phi_k(t)\) from above by optimizing over the enlarged numerical feasible region described above. Observe that $\mathcal{J}$ (and hence $\phi_k$) is invariant under rescaling of the Kähler class. Thus, without loss of generality, we may assume $a=1$ in what follows. With this normalization, 
\[
\phi_k(t)
= m\cdot\frac{3-S}{1-Q}.
\]
By definition of $m$ we have
\[
t_i\ge m,\qquad 1-t_i-t_j\ge m\quad(1\le i\ne j\le k),
\]
so in particular \(0<m\le1/3\). Summing $t_i\ge m$ also gives $S\ge km$.

Fix $m>0$ and $S\ge km$, and consider all $t=(t_1,\dots,t_k)$ with
\[
t_i\ge m,\qquad \sum_{i=1}^k t_i=S.
\]
By Proposition~\ref{prop:mass-shift}, we have \(Q\le (k-1)m^2+\bigl(S-(k-1)m\bigr)^2\),
with equality at \(t^*=(m,\dots,m,\,S-(k-1)m)\) up to permutation of coordinates. Since the denominator $1-Q$ is decreasing in $Q$, we obtain
\begin{equation}\label{eq:ms-upper-P2}
\phi_k(t)
= m\,\frac{3-S}{1-Q}
\;\le\;
m\,\frac{3-S}{1-(k-1)m^2-\bigl(S-(k-1)m\bigr)^2}
=:F(m,S),
\end{equation}
for every geometric vector \(t\) arising from a class in \(\mathcal K^+(X_k)\). We now work on the admissible domain
\begin{equation}\label{eq:Sm-domain-P2}
km\le S\le 1+(k-3)m,\quad
0<m\le\tfrac13,\quad
1-(k-1)m^2-\bigl(S-(k-1)m\bigr)^2>0.
\end{equation}
We distinguish two parts.

\noindent\textbf{Case 1: \(2\le k\le 8\).}
Set \(B:=1+(k-3)m\). We claim that for every \((m,S)\) in the admissible domain, \(F(m,S)\le F(m,B)\). Indeed, a direct computation gives
\[
F(m,B)-F(m,S)
=
\frac{(B-S)\,L_k(m,S)}
{\bigl(4-(k+3)m\bigr)\bigl(1-(k-1)m^2-(S-(k-1)m)^2\bigr)},
\]
where \(L_k(m,S)=S(2-(k-3)m)+k(k-1)m^2-3(k+1)m+2\). 

Since \(B-S\ge0\), \(4-(k+3)m>0\), and \(1-(k-1)m^2-(S-(k-1)m)^2>0\), it remains to show that \(L_k(m,S)\ge0\). For \(2\le k\le8\) and \(0<m\le1/3\), we have
\[
2-(k-3)m \ge 2-\frac{k-3}{3}=\frac{9-k}{3}>0,
\]
so \(L_k(m,S)\) is increasing in \(S\). Hence
\[
L_k(m,S)\ge L_k(m,km)=2k\,m^2-(k+3)m+2.
\]
The discriminant of the quadratic polynomial \(2k\,m^2-(k+3)m+2\) is
\[
(k+3)^2-16k=(k-1)(k-9)<0
\qquad (2\le k\le8),
\]
and its leading coefficient is positive. Therefore \(2k\,m^2-(k+3)m+2>0\) for all \(m\), and hence \(L_k(m,S)>0\). This proves
\[
F(m,S)\le F\bigl(m,1+(k-3)m\bigr).
\]

A direct computation yields
\begin{equation}\label{eq:Gkm-P2}
F\bigl(m,1+(k-3)m\bigr)
=\frac{2-(k-3)m}{\,4-(k+3)m\,}
=:G_k(m).
\end{equation}
The feasibility condition \(1-(k-1)m^2-(1-2m)^2>0\) is equivalent to
\[
0<m<\frac{4}{k+3}.
\]
Differentiating \eqref{eq:Gkm-P2} gives
\[
G_k'(m)=\frac{18-2k}{(4-(k+3)m)^2}>0
\qquad (2\le k\le8).
\]
Since \(4/(k+3)\ge1/3\) for \(k\le8\), the constraint \(m\le1/3\) is active, and thus
\[
\sup_m G_k(m)=G_k\Bigl(\frac13\Bigr)=1.
\]
This value is attained at
\[
m=\frac13,\qquad S=1+(k-3)\frac13=\frac{k}{3}.
\]
Hence
\[
\sup_t \phi_k(t)\le1
\qquad\text{for }2\le k\le8.
\]

\noindent\textbf{Case 2: \(k\ge9\).}
We claim that
\[
F(m,S)\le\frac12
\]
throughout the admissible domain. Indeed, this inequality is equivalent to
\[
2m(3-S)\le 1-(k-1)m^2-(S-(k-1)m)^2.
\]
Set \(y:=S-(k-1)m\). Then the difference between the right-hand side and the left-hand side is
\[
1-(k-1)m^2-y^2-2m(3-(k-1)m-y)
=
1-(y-m)^2+km^2-6m.
\]
Since \(km\le S\le 1+(k-3)m\), we have \(m\le y\le 1-2m\), and therefore \(0\le y-m\le 1-3m\). It follows that
\[
1-(y-m)^2+km^2-6m
\ge
1-(1-3m)^2+km^2-6m
=
(k-9)m^2\ge0.
\]
Thus
\[
F(m,S)\le\frac12
\qquad\text{for all }k\ge9.
\]
For \(k=9\), equality is achieved whenever \(S=1+6m\), equivalently \(y=1-2m\), so
\[
\sup_t \phi_9(t)\le\frac12.
\]
For \(k\ge10\), the inequality is strict for every fixed \(m>0\), but along \(t=(m,\dots,m,\,1-2m),~ m\searrow0\), we have \(F\to\frac12\). Hence
\[
\sup_t \phi_k(t)\le\frac12
\qquad\text{for }k\ge10.
\]

Collecting the two cases, we conclude that
\[
\sup_t \phi_k(t)
\le
\begin{cases}
1, & 2\le k\le 8,\\[2pt]
\frac12, & k\ge 9.
\end{cases}
\]
In particular,
\[
\sup_t \phi_k(t)\le 1
\qquad\text{for all }k\ge2.
\]
Combining this with the case $k=1$ treated at the beginning, we obtain
\[
\mathcal{J}_{\blk\PP^2}([\omega_t])<\mathcal{J}_{\PP^2}([\omega])=12\pi
\]
for all $k\ge1$, as claimed.
\end{proof}
\begin{remark}\label{rem:del-pezzo-break}
The dependence of the upper bounds for \(\mathcal{J}_{\bl_k\PP^2}\) on \(k\) is in perfect agreement with the standard classification of the blow-ups \(\bl_k\PP^2\) according to the positivity of the canonical class. Indeed,
\[
c_1^2(\bl_k\PP^2)=\left(3H-\sum_{i=1}^kE_i\right)^2=9-k.
\]
Thus \(k=9\) is exactly the borderline case where \(c_1^2=0\), while \(c_1^2>0\) for \(k\le8\) and \(c_1^2<0\) for \(k\ge10\). In the del Pezzo range \(k\le8\) (for points in general position), this agrees with the usual positivity picture. We also note that for $k\ge5$ additional $(-1)$-curves appear, for instance conics of class $2H-\sum_{i\in I}E_i$ through five points. Their intersection with $[\omega_t]$ gives further inequalities (such as $2a-\sum_{i\in I}t_i>0$) which can only shrink the feasible region for $(m,S)$ and hence potentially lower the true supremum of $\mathcal{J}_{X_k}$. Thus the upper bounds of $\mathcal{J}_{\blk\PP^2}$ above should not be expected to be sharp in general.
\end{remark}
As a consequence of the previous estimates, we obtain the desired global systolic inequality on $\PP^2$ and its blow-ups.

\begin{theorem}\label{thm:P2-blowups}
Let $X_0=\PP^2$ and let $X_k=\blk \PP^2$ be the blow-up at $k$ points for $k\ge1$. Suppose $\omega_k$ is a PSC Kähler metric on $X_k$. Then
\begin{equation}\label{eq:P2}
\min_{X_k} S(\omega_k)\cdot\syst_2(\omega_k)\;\le\;12\pi,
\end{equation}
with equality if and only if $k=0$, in which case $\omega_0$ is the Fubini-Study metric (after normalization) and $\syst_2(\omega)$ is achieved by $\PP^1$. In particular, \eqref{eq:P2} holds strictly for all blow-ups $X_k$ with $k\ge1$.
\end{theorem}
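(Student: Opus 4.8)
The plan is to deduce the theorem directly from Theorem~\ref{thm:finite-J}(1), the computation $\mathcal{J}_{\PP^2}=12\pi$ in \eqref{eq:P2-12pi}, and the strict bound established in the preceding Lemma. Since each $X_k$ is projective (being a blow-up of $\PP^2$), Theorem~\ref{thm:finite-J}(1) applies to any PSC Kähler metric $\omega_k$ and gives
\[
  \min_{X_k}S(\omega_k)\cdot\syst_2(\omega_k)\;\le\;\mathcal{J}_{X_k}([\omega_k]),
\]
with equality precisely when $\omega_k$ is cscK and its $2$-systole is realized by a holomorphic $1$-cycle. Thus the whole statement reduces to controlling $\mathcal{J}_{X_k}$ and then unwinding these two equality conditions.

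First, for $k=0$ I would invoke \eqref{eq:P2-12pi}, which shows $\mathcal{J}_{\PP^2}([\omega])=12\pi$ for every Kähler class $aH$; combined with the displayed inequality this yields $\min S\cdot\syst_2\le 12\pi$ at once. For $k\ge1$ the preceding Lemma gives the strict bound $\mathcal{J}_{X_k}([\omega_t])<12\pi$ for every PSC Kähler class, so $\min S\cdot\syst_2<12\pi$ and no blow-up can attain equality. In particular, equality in \eqref{eq:P2} forces $k=0$.

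It then remains to characterize equality on $\PP^2$ itself. By the equality clause of Theorem~\ref{thm:finite-J}(1), equality forces $\omega_0$ to be cscK and its systole to be realized holomorphically. The key observation is that every Kähler class $aH$ on $\PP^2$ is proportional to $c_1(\PP^2)=3H$, so a cscK metric in such a class is automatically Kähler--Einstein: writing $\rho=\tfrac{2\pi}{\lambda}\omega_0+\ddbar f$ via the $\partial\bar\partial$-lemma, the cscK condition makes $\Delta f$ constant, hence $f$ constant by integration, so $\rho$ is proportional to $\omega_0$. By Bando--Mabuchi uniqueness of the Kähler--Einstein metric on $\PP^2$ (up to scaling and automorphism), $\omega_0$ is the Fubini--Study metric. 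Conversely, $\omega_{\mathrm{FS}}$ is Einstein, hence cscK, and every line $\PP^1\subset\PP^2$ is calibrated by $\omega_{\mathrm{FS}}$ (Wirtinger), hence area-minimizing in its homology class; since $H_2(\PP^2;\ZZ)\cong\ZZ\langle H\rangle$, the systole is attained by a line and $\syst_2=\sysh_2$. Thus both equality conditions hold exactly for the Fubini--Study metric, with $\syst_2$ realized by $\PP^1$.

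The inequality itself is essentially immediate once the earlier results are in place; I expect the only point requiring genuine care to be the equality analysis on $\PP^2$—specifically the passage from ``cscK in a class proportional to $c_1$'' to ``Kähler--Einstein'' and the appeal to uniqueness to pin down $\omega_{\mathrm{FS}}$. The calibration argument identifying the line as the systolic representative is standard but must be stated to verify the second equality condition; everything else is a direct assembly of \eqref{eq:P2-12pi}, the Lemma, and Theorem~\ref{thm:finite-J}.
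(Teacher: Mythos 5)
Your proposal is correct and follows essentially the same route as the paper, which likewise treats the theorem as a direct consequence of Theorem~\ref{thm:finite-J}(1), the computation \eqref{eq:P2-12pi}, and the strict bound $\mathcal{J}_{\blk\PP^2}<12\pi$ from the preceding lemma. The only difference is that you make explicit the equality analysis (cscK in a class proportional to $c_1(\PP^2)$ forces Kähler--Einstein, Bando--Mabuchi uniqueness pins down $\omega_{\mathrm{FS}}$, and the Wirtinger calibration shows the line realizes the systole), details which the paper leaves implicit.
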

\begin{proof}
For \(k=0\), equation \eqref{eq:P2-12pi} shows that \(\mathcal{J}_{\PP^2}([\omega])=12\pi\) for every K\"ahler class \([\omega]\) on \(\PP^2\). Hence Theorem~\ref{thm:finite-J} (1) gives
\[
\min_{\PP^2} S(\omega)\cdot \syst_2(\omega)\le 12\pi.
\]
For \(k\ge1\), Lemma~\ref{lem:Pk-strict} shows that every class
\([\omega_k]\in \mathcal K^+(X_k)\) satisfies
\[
\mathcal{J}_{X_k}([\omega_k])<12\pi.
\]
If \(\omega_k\) is a PSC K\"ahler metric on \(X_k\), then \([\omega_k]\in\mathcal K^+(X_k)\), so Theorem~\ref{thm:finite-J} (1) again yields
\[
\min_{X_k} S(\omega_k)\cdot \syst_2(\omega_k)
\le \mathcal{J}_{X_k}([\omega_k])
<12\pi.
\]
This proves the strict inequality for all \(k\ge1\).

It remains to characterize the equality case for \(k=0\). If
\[
\min_{\PP^2} S(\omega_0)\cdot \syst_2(\omega_0)=12\pi,
\]
then equality must hold in Theorem~\ref{thm:finite-J} (1). Hence \(\omega_0\) is cscK and \(\syst_2(\omega_0)\) is realized by a holomorphic \(1\)-cycle. Since every K\"ahler class on \(\PP^2\) is proportional to \(H\), such a cscK metric is homothetic to the Fubini--Study metric, and the holomorphic \(2\)-systole is realized by a projective line \(\PP^1\subset \PP^2\). Conversely, every metric homothetic to the Fubini--Study metric realizes equality.
\end{proof}

\begin{example}\label{ex:J1-J2}
We can compute the precise supremum of $\mathcal{J}_{X_k}([\omega])$ when $k=1,2$. For $k=1$, any Kähler class can be written as
\([\omega]=aH-tE,\) for \( a>0,\ t>0,\ a>t\). Also, we have
\[
c_1(X_1)=3H-E,\qquad
c_1(X_1)\cdot[\omega]=3a-t,\qquad
[\omega]^2=a^2-t^2.
\]
The Mori cone of $X_1$ is generated by $E$ and $H-E$, so
\[
\sysh_2([\omega])
=\min\{[\omega]\cdot E,\,[\omega]\cdot(H-E)\}
=\min\{t,\ a-t\}.
\]
Since $\mathcal{J}_{X_1}([\omega])$ is invariant under overall scaling of
$[\omega]$, it only depends on the ratio $x:=t/a\in(0,1)$. We may therefore assume
$a=1$, and obtain
\[
\mathcal{J}_{X_1}([\omega])
=4\pi\,\min\{t,1-t\}\,\frac{3-t}{1-t^2}
=:h_1(t),\qquad 0<t<1.
\]
A direct computation shows that $h_1(t)$ is strictly increasing on $(0,\tfrac12]$,
so its maximum is attained at $t=\tfrac12$, i.e.\ in the class proportional to $H-\tfrac12E$. Hence
\[
\sup_{\mathcal{K}(X_1)} \mathcal{J}_{X_1}([\omega])
=\mathcal{J}_{X_1}\left([H-\tfrac12E]\right)
=\frac{20\pi}{3}.
\]

For $k=2$, any Kähler class can be written as $[\omega] = aH - t_1E_1 - t_2E_2,~ a>0,\ t_i>0$. And,
\[
c_1(X_2)=3H-E_1-E_2,\qquad
c_1(X_2)\cdot[\omega]=3a-t_1-t_2,\qquad
[\omega]^2=a^2-t_1^2-t_2^2.
\]
The Mori cone of $X_2$ is generated by the $(-1)$-curves
$E_1,E_2,H-E_1-E_2$, so
\[
\sysh_2([\omega])
=\min\bigl\{t_1,\ t_2,\ a-t_1-t_2\bigr\}.
\]
Again $\mathcal{J}_{X_2}([\omega])$ is invariant under overall scaling of
$[\omega]$, so we may set $a=1$ and obtain
\[
\mathcal{J}_{X_2}([\omega])
=4\pi\,\min\bigl\{t_1,\ t_2,\ 1-t_1-t_2\bigr\}
\frac{3-t_1-t_2}{1-t_1^2-t_2^2}
=:h_2(t_1,t_2),\qquad 0<t_1+t_2<1.
\]
A straightforward analysis of the resulting two-variable function shows that the
maximum of $h_2$ occurs at $t_1=t_2=\tfrac13$, so that
\[
\sup_{\mathcal{K}(X_2)} \mathcal{J}_{X_2}([\omega])
=\mathcal{J}_{X_2}\Bigl(\bigl[H-\tfrac13E_1-\tfrac13E_2\bigr]\Bigr)
=4\pi.
\]
In both cases \(k=1,2\), for any PSC K\"ahler metric \(\omega\) on \(X_k\) we have
\[
\min_{X_k} S(\omega)\cdot\syst_2(\omega)
< \mathcal{J}_{X_k}([\omega])
\le \sup_{\mathcal{K}(X_k)} \mathcal{J}_{X_k},
\]
and the inequality is strict since there is no cscK metric on $\bl_k \PP^2$ for $k=1,2$.
\end{example}
\section{Systolic inequalities on ruled surfaces}\label{sec:ruled}
In this section we study PSC Kähler metrics on ruled surfaces and establish a
uniform upper bound for the $2$-systole. The following theorem
summarizes the global picture.

\begin{theorem}\label{thm:ruled-main}
Let $X\to B$ be a ruled surface (not necessarily minimal) fibred over a smooth
complex curve $B$, and let $\omega$ be a PSC Kähler metric on $X$. Then
\[
\syst_2(\omega)\cdot \min_X S(\omega) \;\le\; 8\pi.
\]
Moreover, equality holds if and only if $B\cong \PP^1$ and
$X\cong \PP^1\times\PP^1$, and, up to overall scaling, $\omega$ is the
product Fubini--Study metric.
\end{theorem}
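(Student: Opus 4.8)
The plan is to deduce the stated estimate from the functional bound $\min_X S(\omega)\cdot\syst_2(\omega)\le\mathcal{J}_X([\omega])$ of Theorem~\ref{thm:finite-J}(1), so that it suffices to prove $\sup_{\mathcal{K}^+(X)}\mathcal{J}_X\le 8\pi$ and then to track when equality can propagate back. Since $X$ is ruled over $B$, it is a blow-up $\pi\colon X\to X_0$ of a geometrically ruled surface $X_0=\PP(\mathcal{E})\to B$, and I would split the argument according to $g:=g(B)$: the non-rational case $g\ge1$ and the rational case $g=0$ (Hirzebruch minimal models $\FF_e$). In every case the generic fibre is a rational curve $f$ with $f^2=0$ and, by adjunction, $c_1(X)\cdot f=2$; its class $\pi^*f$ is disjoint from the exceptional divisors, so it is always an admissible competitor and gives $\sysh_2([\omega])\le[\omega]\cdot f$.

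For the non-rational case I expect a short self-contained argument. Writing $\alpha:=[\omega]\cdot f$ and combining $\sysh_2\le\alpha$ with $c_1\cdot f=2$, $f^2=0$, a one-line intersection computation on the minimal surface yields the identity $\alpha\,(c_1(X_0)\cdot[\omega_0])=[\omega_0]^2+(2-2g)\alpha^2$, hence
\[
  \mathcal{J}_X([\omega])\le 4\pi\,\alpha\,\frac{c_1(X)\cdot[\omega]}{[\omega]^2}
  =4\pi\Bigl(1+\frac{(2-2g)\alpha^2}{[\omega]^2}\Bigr).
\]
To handle blow-ups I would write $[\omega]=\pi^*[\omega_0]-\sum t_iE_i$, note $c_1(X)=\pi^*c_1(X_0)-\sum E_i$, and use the Seshadri bound $t_i\le[\omega_0]\cdot f=\alpha$ (via the fibre through each blown-up point) to reduce the claim to $Q:=\sum t_i^2\le\alpha\sum t_i=\alpha S$, which is immediate since $t_i\le\alpha$. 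As $2-2g\le0$ for $g\ge1$, this gives $\mathcal{J}_X\le 4\pi<8\pi$, with equality forcing $g=1$ and $S=0$, i.e.\ the minimal elliptic ruled surface.

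For the rational case I would first treat $\FF_e$ directly. In the basis $(C_0,f)$ with $C_0^2=-e$ the Mori cone is generated by $C_0$ and $f$, so $\sysh_2=\min\{\alpha,\beta-e\alpha\}$ for $[\omega]=\alpha C_0+\beta f$, and a direct optimization yields $\sup_{\mathcal{K}}\mathcal{J}_{\FF_e}=4\pi\bigl(1+\tfrac{2}{e+2}\bigr)$, which equals $8\pi$ exactly when $e=0$ (attained at the symmetric class $\alpha=\beta$ on $\PP^1\times\PP^1$) and is strictly less than $8\pi$ for $e\ge1$. \textbf{The main obstacle} is the passage to blow-ups of $\FF_e$, where $\mori(X)$ may be infinitely generated and the fibre bound alone fails: here $2-2g=2>0$, so $\mathcal{J}_X\le4\pi\bigl(1+\tfrac{2\alpha^2-\alpha S}{[\omega]^2}\bigr)$ is not automatically below $8\pi$. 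I would adapt the mass-shift machinery of Section~\ref{sec:P2}: record the coarse constraints coming from the finitely many effective curves $E_i$, $\pi^*f-E_i$ (strict transforms of fibres through blown-up points), $\pi^*C_0$, and the relevant section classes; reduce $\mathcal{J}_X$ to a function $\phi(m,S,Q)$ of the aggregate data $m=\sysh_2$, $S=\sum t_i$, $Q=\sum t_i^2$; and invoke Proposition~\ref{prop:mass-shift} to replace $Q$ by its extremal value $Q_{\max}(m,S)$. The resulting finite-dimensional optimization over the coarse admissible region should give $\mathcal{J}_X<8\pi$ for every nontrivial blow-up, the delicate point being to choose enough effective curves that the coarse region is tight enough to stay strictly below $8\pi$.

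Finally, for rigidity I would combine the two strands. The equality $\min_X S(\omega)\cdot\syst_2(\omega)=8\pi$ forces both $\mathcal{J}_X([\omega])=8\pi$ and equality in Theorem~\ref{thm:finite-J}(1). The first identity excludes the non-rational case (bounded by $4\pi$) and all nontrivial Hirzebruch blow-ups as well as $\FF_e$ with $e\ge1$, leaving only $X\cong\PP^1\times\PP^1$ carrying the symmetric class $\alpha=\beta$. The second identity requires $\omega$ to be cscK with $\syst_2(\omega)$ realized by a holomorphic $1$-cycle; on $\PP^1\times\PP^1$ the symmetric class contains a cscK metric that is unique up to automorphism (it is in fact K\"ahler--Einstein), namely the product Fubini--Study metric, whose $2$-systole is realized by a $\PP^1$-factor. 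This closes the equality analysis and completes the proof.
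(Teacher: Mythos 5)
Your proposal follows essentially the same architecture as the paper: reduce to the functional bound $\min_X S(\omega)\cdot\syst_2(\omega)\le\mathcal{J}_X([\omega])$ from Theorem~\ref{thm:finite-J}(1), split according to the genus of the base, compute $\sup\mathcal{J}$ on minimal models via the Mori cone, control blow-ups by the mass-shift machinery of Proposition~\ref{prop:mass-shift}, and close the equality case by cscK rigidity on $\PP^1\times\PP^1$. Two points deserve comment. First, your treatment of the non-rational case is a genuine (and correct) simplification: the identity $\alpha\,\bigl(c_1(X_0)\cdot[\omega_0]\bigr)=[\omega_0]^2+(2-2g)\alpha^2$ with $\alpha=[\omega]\cdot f$, combined with the fibre bounds $t_i<\alpha$ and $Q\le\alpha S$, gives $\mathcal{J}_X\le 4\pi$ for all blow-ups in one stroke, whereas the paper runs a full Mori-cone optimization (Proposition~\ref{prop:ruled-J0-bound}, Case 1) plus a separate argument for blow-ups (Lemma~\ref{lem:Jk-ruled-g>=1}, which in fact yields the sharper constant $2\pi$ -- not needed for the $8\pi$ statement). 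Second, the step you flag as the main obstacle -- blow-ups of $\FF_e$ -- is exactly where the paper does its real work, in Proposition~\ref{prop:Jk-g0} and Lemmas~\ref{lem:Phi-le-1} and~\ref{lem:Jk-upper-Fe}: using the coarse constraints from $E_i$, the strict fibre transforms, and the section $C_0$, together with Proposition~\ref{prop:mass-shift}, one reduces to verifying an explicit inequality $D\ge N$ between the denominator and numerator of $\phi(b,m,S)$, which holds and in fact gives the much stronger bound $\sup\mathcal{J}_{X_k}\le 4\pi$ (not merely $<8\pi$). So your plan is sound and is confirmed by the paper, but as written it is a sketch at precisely the delicate point; carrying out that monotonicity-in-$b$ plus endpoint analysis is what turns the proposal into a proof. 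Your rigidity discussion (equality forces $e=0$, minimal, the symmetric class, cscK, hence K\"ahler--Einstein and equal to the product Fubini--Study metric by uniqueness) is correct and fills in details the paper leaves implicit in Theorem~\ref{thm:hirzebruch}.
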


The proof of Theorem~\ref{thm:ruled-main} will be divided according to the
genus of the base curve. In the non-rational case $g(B)\ge1$ we actually obtain
the sharper bound $4\pi$ (Theorem~\ref{thm:ruled-g>=1}), whereas in the rational
case $B\cong\PP^1$ we show that the optimal constant is $8\pi$, with rigidity
only on $\PP^1\times\PP^1$ (Theorem~\ref{thm:hirzebruch}).


\subsection{Geometry of ruled surfaces and notation}

Let \(X_0\to B\) be a minimal ruled surface over a smooth complex curve \(B\) of genus \(g\). Let \(F\) denote the fibre class, and let \(C_0\) be a section of minimal self-intersection. We write
\[
F^2=0,\qquad C_0\cdot F=1,\qquad C_0^2=-e,
\]
where \(e\in \ZZ\) is the invariant of the ruled surface. For \(B\cong \PP^1\), one always has \(e\ge0\), and \(X_0\cong \FF_e\). For \(g\ge2\), one also has \(e\ge0\). When \(g=1\), besides the cases \(e\ge0\), there is one exceptional indecomposable minimal ruled surface with \(e=-1\). 

In all cases, the first Chern class is \(c_1(X_0)=2C_0+(2-2g+e)F\). A numerical class on \(X_0\) can be written as \([\omega]=aC_0+bF\), and one computes
\[
[\omega]^2=(aC_0+bF)^2=2ab-ea^2,
\]
and
\[
c_1(X_0)\cdot[\omega]
=\bigl(2C_0+(2-2g+e)F\bigr)\cdot(aC_0+bF)
=2b+(2-2g-e)a.
\]
When \(e\ge0\), the K\"ahler cone is given by \(a>0,~ b>ae\). In the exceptional elliptic case \(g=1\), \(e=-1\), the K\"ahler cone is given by \(a>0,~ b>-\frac{a}{2}\). Indeed, in this case the extra effective class \(2C_0-F=-K_{X_0}\) appears, and positivity on \(F\) and \(2C_0-F\) is equivalent to the above condition.

\begin{proposition}\label{prop:ruled-J0-bound}
Let \(X_0\) be a minimal ruled surface over a smooth complex curve \(B\) of genus \(g\), with invariant \(e\). Then
\begin{equation}\label{eq:J-ruled-X0}
\sup_{\mathcal{K}(X_0)}\mathcal{J}_{X_0}([\omega])
=
\begin{cases}
\displaystyle 4\pi\,\dfrac{e+4}{e+2}, & g=0,\\[8pt]
\displaystyle 4\pi, & g\ge1.
\end{cases}
\end{equation}
Moreover:
\begin{enumerate}
    \item when \(g=0\), the supremum is attained precisely on classes proportional to \(C_0+(e+1)F\);
    \item when \(g=1\) and \(e\ge0\), the supremum is attained precisely when \(b/a\ge e+1\);
    \item when \(g=1\) and \(e=-1\), the supremum is attained precisely when \(b/a\ge0\);
    \item when \(g>1\), the supremum is not attained.
\end{enumerate}
\end{proposition}

\begin{proof}
We split the proof according to the value of \(e\).

\medskip\noindent
\textbf{Case 1: \(e\ge0\).}
In this case it is well known that the Mori cone \(\overline{\NE}(X_0)\) is generated by the numerical classes of \(F\) and \(C_0\). Hence
\[
\sysh_2([\omega])
=\min\{[\omega]\cdot F,\,[\omega]\cdot C_0\}
=\min\{a,\ b-ae\}.
\]
Therefore
\[
\mathcal{J}_{X_0}([\omega])
=\min\{a,\ b-ae\}\cdot
4\pi\,\frac{2b+(2-2g-e)a}{2ab-ea^2}.
\]
Since \(\mathcal{J}_{X_0}\) is homogeneous of degree \(0\) in \([\omega]\), it depends only on the ratio \(t:=b/a\). The K\"ahler condition becomes \(t>e\), and
\[
\mathcal{J}_{X_0}([\omega])
=
4\pi\,\phi_{e,g}(t),
\qquad
\phi_{e,g}(t)
:=
\min\{1,\ t-e\}\cdot\frac{2t+2-2g-e}{2t-e},
\qquad t>e.
\]
We now distinguish according to \(g\).

\smallskip\noindent
\emph{Subcase 1a: \(g\ge1\).}
Since \(2-2g\le0\), we have \(2t+2-2g-e\le 2t-e\), and hence
\[
\frac{2t+2-2g-e}{2t-e}\le 1.
\]
Therefore
\[
\phi_{e,g}(t)
\le \min\{1,t-e\}\le 1,
\]
so
\[
\sup_{t>e}\phi_{e,g}(t)\le 1.
\]
If \(g=1\), then \(\phi_{e,1}(t)=\min\{1,t-e\}\), so
\(\sup_{t>e}\phi_{e,1}(t)=1\), and the supremum is attained precisely when \(t-e\ge1\), i.e. \(b/a\ge e+1\). If \(g>1\), then \(2-2g<0\), so \(\phi_{e,g}(t)<\min\{1,t-e\}\le 1\) for all \(t>e\). On the other hand,
\[
\lim_{t\to+\infty}\phi_{e,g}(t)
=
\lim_{t\to+\infty}\frac{2t+2-2g-e}{2t-e}
=1.
\]
Hence \(\sup_{t>e}\phi_{e,g}(t)=1\), but it is not attained. Thus, for all \(g\ge1\) and \(e\ge0\),
\[
\sup_{\mathcal K(X_0)}\mathcal J_{X_0}([\omega])=4\pi.
\]

\smallskip\noindent
\emph{Subcase 1b: \(g=0\).}
Now
\[
\phi_{e,0}(t)
=
\begin{cases}
(t-e)\dfrac{2t+2-e}{2t-e}, & e<t\le e+1,\\[6pt]
\dfrac{2t+2-e}{2t-e}, & t\ge e+1.
\end{cases}
\]
For \(t\ge e+1\),
\[
\phi_{e,0}'(t)
=
\frac{d}{dt}\left(\frac{2t+2-e}{2t-e}\right)
=
-\frac{4}{(2t-e)^2}<0,
\]
so \(\phi_{e,0}\) is strictly decreasing on \([e+1,+\infty)\), and
\[
\max_{t\ge e+1}\phi_{e,0}(t)
=
\phi_{e,0}(e+1)
=
\frac{e+4}{e+2}.
\]
For \(e<t\le e+1\), a direct computation gives \(\phi_{e,0}'(t)>0\), so \(\phi_{e,0}\) is strictly increasing on \((e,e+1]\), and
\[
\max_{e<t\le e+1}\phi_{e,0}(t)
=
\phi_{e,0}(e+1)
=
\frac{e+4}{e+2}.
\]
Thus
\[
\sup_{t>e}\phi_{e,0}(t)
=
\phi_{e,0}(e+1)
=
\frac{e+4}{e+2},
\]
and the supremum is attained precisely at \(t=e+1\), i.e. on classes proportional to \(C_0+(e+1)F\).

\medskip\noindent
\textbf{Case 2: \(g=1\) and \(e=-1\).}
This is the exceptional indecomposable elliptic ruled surface. In this case the effective cone is generated by the classes \(F,~C_0,\) and \(2C_0-F=-K_{X_0}\). Hence for a class \([\omega]=aC_0+bF\), we have
\[
[\omega]\cdot F=a,\qquad
[\omega]\cdot C_0=a+b,\qquad
[\omega]\cdot (2C_0-F)=a+2b.
\]
Then the K\"ahler condition becomes \(a>0,\ b>- a/2\). Writing \(t=b/a\), with \(t>-\frac12\), we obtain
\[
\sysh_2([\omega])=a\min\{1,\ 1+2t\}.
\]
Moreover, \(c_1(X_0)=2C_0-F\), so
\[
c_1(X_0)\cdot[\omega]=2b+a,
\qquad
[\omega]^2=a(2b+a).
\]
Therefore
\[
\mathcal J_{X_0}([\omega])
=
\sysh_2([\omega])\cdot 4\pi\frac{2b+a}{a(2b+a)}
=
4\pi\min\{1,\ 1+2t\}.
\]
It follows immediately that
\[
\sup_{\mathcal K(X_0)}\mathcal J_{X_0}([\omega])=4\pi,
\]
and the supremum is attained precisely when $t \ge 0$
that is, \(\frac ba\ge0\). Combining the above cases proves the proposition.
\end{proof}

\begin{remark}
When \(B\cong \PP^1\), the ruled surface \(X_0\) is the Hirzebruch surface \(\FF_e\) with \(e\ge0\). In particular, if $e=1$, $\FF_1$ is not minimal and is isomorphic to the blowup $\bl_1\PP^2$ of $\PP^2$ at one point. As expected, our computations of $\mathcal{J}_{\bl_1\PP^2}$ in Example~\ref{ex:J1-J2} and of $\mathcal{J}_{\FF_1}$ in Proposition~\ref{prop:ruled-J0-bound} yield the same value..
\end{remark}

For later use, we also fix notation for blow-ups of ruled surfaces. Let \(X_k=\bl_k X_0\) be the blow-up of \(X_0\) at \(k\) points in very general position, and denote by \(E_1,\dots,E_k\) the exceptional divisors. Here ``very general position'' means that the blown-up points lie on distinct fibres of the ruling, and none of them lies on the distinguished section \(C_0\). This is the worst configuration for our purposes: if some points lie on the same fibre, or on \(C_0\), additional effective curves appear whose strict transforms can only decrease \(\sysh_2([\omega])\), and hence can only decrease \(\mathcal J_{X_k}([\omega])\).

On $X_k$, $k\ge 1$, we have
\[
NS^1(X_k;\RR)=\langle C_0,F,E_1,\dots,E_k\rangle,
\]
with intersection pairings
\[
C_0^2=-e,\qquad F^2=0,\qquad C_0\cdot F=1,
\]
and
\[
E_i^2=-1,\qquad E_i\cdot C_0=E_i\cdot F=E_i\cdot E_j=0\quad(i\neq j).
\]
We will write a K\"ahler class on \(X_k\) in the form
\[
[\omega]=aC_0+bF-\sum_{i=1}^k t_iE_i,
\qquad a>0,\quad t_i>0.
\]
The first Chern class is
\[
c_1(X_k)
=
\pi^*c_1(X_0)-\sum_{i=1}^k E_i
=
2C_0+(2-2g+e)F-\sum_{i=1}^k E_i.
\]
Consequently,
\[
[\omega]^2
=
2ab-ea^2-\sum_{i=1}^k t_i^2,
\]
and
\[
c_1(X_k)\cdot[\omega]
=
2b+(2-2g-e)a-\sum_{i=1}^k t_i.
\]
These formulas will be used repeatedly in the subsequent subsections.
\subsection{Non-rational ruled surfaces}

Suppose $X_0\to B$ is a compact ruled surface over a smooth complex curve $B$
of genus $g>0$. It follows from Proposition~\ref{prop:ruled-J0-bound} that
\[
\sup_{\mathcal{K}(X_0)}\mathcal{J}_{X_0}([\omega])=4\pi.
\]
We now study blow-ups of such ruled surfaces.

\begin{lemma}\label{lem:Jk-ruled-g>=1}
Let $X_0$ be a minimal ruled surface over a smooth curve $B$ of genus $g\ge1$, with invariant $e\ge-1$. For an integer $k\ge1$, let $X_k=\blk(X_0)$ be the blow-up at $k$ points in very general position, with exceptional divisors $E_1,\dots,E_k$.
Then for every $k\ge1$ one has
\[
\sup_{\mathcal{K}^+(X_k)}\mathcal{J}_{X_k}([\omega])= 2\pi.
\]
\end{lemma}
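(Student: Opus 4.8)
The plan is to run the two–step scheme of Section~\ref{sec:P2} (first an upper bound, then attainment), but here the geometry makes the upper bound essentially elementary, and in particular the mass–shift machinery of Proposition~\ref{prop:mass-shift} is not needed. Writing a class as $[\omega]=aC_0+bF-\sum_{i=1}^k t_iE_i$ with $a>0$, $b>ae$, $t_i>0$, set $m:=\sysh_2([\omega])$ and abbreviate $N:=c_1(X_k)\cdot[\omega]=2b+(2-2g-e)a-\sum_i t_i$ and $D:=[\omega]^2=2ab-ea^2-\sum_i t_i^2$, so that $\mathcal{J}_{X_k}([\omega])=4\pi\,m\,N/D$, with $N>0$ on $\mathcal{K}^+(X_k)$ and $D>0$. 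Because the $k$ points lie on distinct fibres and off $C_0$, the curves $E_i$, the strict transforms $F-E_i$ of the fibres through the $p_i$, and the section $C_0$ are all effective, with
\[
[\omega]\cdot E_i=t_i,\qquad [\omega]\cdot(F-E_i)=a-t_i,\qquad [\omega]\cdot C_0=b-ea .
\]
The first two give $m\le t_i$ and $m\le a-t_i$ for every $i$, hence $2m\le t_i+(a-t_i)=a$, i.e. $m\le a/2$: blowing up splits each fibre and forces the systole below half a fibre, which is precisely the source of the drop from $4\pi$ (Proposition~\ref{prop:ruled-J0-bound}) to $2\pi$.

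For the upper bound I would isolate the single inequality $N/D\le 1/a$. A direct computation gives
\[
aN-D=\sum_{i=1}^k t_i^2-a\sum_{i=1}^k t_i+(2-2g)a^2 ,
\]
and since $t_i\le a-m<a$ one has $t_i^2<a\,t_i$, whence $\sum_i t_i^2<a\sum_i t_i$; as $g\ge1$ makes $(2-2g)a^2\le0$, we conclude $aN-D<0$, i.e. $N/D<1/a$. This is exactly the point where the mass–shift step collapses: the bound $\sum t_i^2\le a\sum t_i$ needed here is automatic, so there is no coarse region to optimise over. Combining $N/D<1/a$ with $m\le a/2$ yields
\[
\mathcal{J}_{X_k}([\omega])=4\pi\,m\,\frac{N}{D}<4\pi\cdot\frac{a}{2}\cdot\frac{1}{a}=2\pi
\]
for every $[\omega]\in\mathcal{K}^+(X_k)$ and every $k\ge1$; in particular the bound is strict, so $2\pi$ is never attained on a blow-up, only approached.

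It remains to show the supremum equals $2\pi$, which is the only genuine obstacle. I would take the symmetric family $[\omega_b]=\pi^*(C_0+bF)-\tfrac12\sum_{i=1}^kE_i$ (so $a=1$, $t_i=\tfrac12$) and let $b\to\infty$; then $N/D=(2b+O(1))/(2b+O(1))\to1$, so everything reduces to proving $\sysh_2([\omega_b])=\tfrac12$ for $b$ large. Every irreducible curve $C\subset X_k$ is either a fibre component — namely $F$, $E_i$, or $F-E_i$, giving $[\omega_b]\cdot C\in\{1,\tfrac12\}$ — or horizontal with $d:=F\cdot C\ge1$, in which case (for $\pi_*C\neq C_0$; the case $C=C_0$ gives $b-e\ge\tfrac12$ directly)
\[
[\omega_b]\cdot C=(C_0+bF)\cdot\pi_*C-\tfrac12\sum_{i=1}^k\mathrm{mult}_{p_i}(\pi_*C)\ge bd-\tfrac{k}{2}d=\Bigl(b-\tfrac{k}{2}\Bigr)d\ge\tfrac12
\]
for $b$ large, using $\mathrm{mult}_{p_i}(\pi_*C)\le F\cdot C=d$ and $C_0\cdot\pi_*C\ge0$. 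Hence $\sysh_2([\omega_b])=\tfrac12$ (attained by any $E_i$); since also $[\omega_b]^2>0$, Nakai–Moishezon shows $[\omega_b]$ is ample and, for $b$ large, lies in $\mathcal{K}^+(X_k)$. Therefore $\mathcal{J}_{X_k}([\omega_b])=4\pi\cdot\tfrac12\cdot(N/D)\to2\pi$, which together with the strict upper bound gives $\sup_{\mathcal{K}^+(X_k)}\mathcal{J}_{X_k}=2\pi$. The main work is thus confined to this last curve–by–curve estimate pinning down the systole of the extremal family; the upper bound itself is elementary.
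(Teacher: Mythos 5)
Your proof is correct and follows essentially the same route as the paper: the upper bound comes from the effective curves $E_i$ and $F-E_i$ forcing $\sysh_2([\omega])\le a/2$ together with the numerator--denominator comparison (your $aN-D=\sum t_i^2-a\sum t_i+(2-2g)a^2<0$ is exactly the paper's computation before normalizing $a=1$), and sharpness uses the same extremal family $C_0+bF-\tfrac12\sum E_i$ with $b\to\infty$. If anything, your curve-by-curve verification that $\sysh_2([\omega_b])=\tfrac12$ for large $b$ (splitting into vertical components and horizontal curves, using $C_0\cdot\pi_*C\ge0$ and $\mathrm{mult}_{p_i}(\pi_*C)\le F\cdot C$) is more carefully justified than the paper's brief assertion of the same fact.
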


\begin{proof}
Since \(\mathcal{J}_{X_k}([\omega])\) is homogeneous of degree \(0\) in \([\omega]\), we may normalize \(a=1\) and write
\[
[\omega]=C_0+bF-\sum_{i=1}^k t_iE_i,\qquad t_i>0,
\]
and \(b>e\) if \(e\ge0\); \(b>-1/2\) if \(g=1\) and \(e=-1\). In particular, since \([\omega]\) is K\"ahler, we always have \([\omega]^2=2b-e-\sum t_i^2>0\).

As in the minimal case, $\mathcal{J}_{X_k}([\omega])$ can be written as
\[
\mathcal{J}_{X_k}([\omega])
=\sysh_2([\omega])\cdot 4\pi\,
\frac{2b+2-2g-e-\sum_{i=1}^k t_i}{2b-e-\sum_{i=1}^k t_i^2}.
\]

Since all blowup points are in very general positions, there is no $p_i$ lies on $C_0$ and no two $p_i$ lie on the same fibre. For each blownup point $p_i\in X_0$, let $F_i$ be the unique fibre through $p_i$ and let $\widetilde{F}_i\subset X_k$ be its strict transform. Then $\widetilde{F}_i$ has class $F-E_i$ and is effective. Since each exceptional divisor $E_i$ is also effective, we obtain
\[
[\omega]\cdot E_i=t_i,\qquad
[\omega]\cdot(F-E_i)=(C_0+bF-\sum t_jE_j)\cdot(F-E_i)=1-t_i.
\]
In particular,
\[
\sysh_2([\omega])
\le\min_{1\le i\le k}\bigl\{[\omega]\cdot E_i,\ [\omega]\cdot(F-E_i)\bigr\}
=\min_{1\le i\le k}\{t_i,1-t_i\}.
\]
The Kähler condition $[\omega]\cdot(F-E_i)>0$ implies $0<t_i<1$, hence for each $i$ the function $\min\{t_i,1-t_i\}$, viewed as a function of $t_i\in(0,1)$, attains its maximal value $1/2$ at $t_i=1/2$. Thus for all Kähler classes
\[
\sysh_2([\omega])\le\frac12.
\]
On the other hand, using $g\ge1$ and $0<t_i<1$ we have
\[
\begin{aligned}
\bigl(2b+2-2g-e-\sum t_i\bigr)
-\bigl(2b-e-\sum t_i^2\bigr)=(2-2g)+\sum(t_i^2-t_i) < 0.
\end{aligned}
\]
Hence
\[
\frac{2b+2-2g-e-\sum t_i}{2b-e-\sum t_i^2}<1,
\]
and therefore
\[
\mathcal{J}_{X_k}([\omega])
\le 4\pi\,\sysh_2([\omega])\le 4\pi\cdot\frac12=2\pi.
\]

It remains to show that the bound \(2\pi\) is sharp. Fix \(t_1=\cdots=t_k=1/2\) and consider the family of classes
\[
[\omega_b]=C_0+bF-\frac12\sum_{i=1}^k E_i,
\qquad b\gg1.
\]
For \(b\) sufficiently large, these classes lie in the K\"ahler cone. 

We claim that, for all sufficiently large \(b\),
\[
\sysh_2([\omega_b])=\frac12.
\]
Indeed, each exceptional divisor \(E_i\) is effective and satisfies \([\omega_b]\cdot E_i=1/2\). Moreover, since the blown-up points lie on distinct fibres, the strict transform of the fibre through \(p_i\) is effective of class \(F-E_i\), and \([\omega_b]\cdot(F-E_i)=1/2.\) Hence
\[
\sysh_2([\omega_b])\le \frac12.
\]

Conversely, let \(C\subset X_k\) be any irreducible effective curve distinct from the \(E_i\) and the strict transforms \(F-E_i\). Write its numerical class as
\[
[C]=dC_0+nF-\sum_{i=1}^k m_iE_i,
\qquad d,n,m_i\in\ZZ_{\ge0}.
\]
Then
\[
[\omega_b]\cdot C=(b-e)d+n-\frac12\sum_{i=1}^k m_i.
\]
If \(d\ge1\), this tends to \(+\infty\) as \(b\to+\infty\), regardless of whether \(e\ge0\) or \(e=-1\). Thus it is strictly bigger than \(1/2\) for all sufficiently large \(b\). If \(d=0\), then \(C\) is supported in fibres and exceptional divisors. Since the blown-up points lie on distinct fibres, the only irreducible possibilities are \(E_i\) and \(F-E_i\), which have already been treated. Therefore every other irreducible effective curve has intersection strictly bigger than \(1/2\) for \(b\gg1\), and hence
\[
\sysh_2([\omega_b])=\frac12.
\]

For these classes we have
\[
\mathcal J_{X_k}([\omega_b])
=
\frac12\cdot
4\pi\,
\frac{2b+2-2g-e-k/2}{2b-e-k/4}.
\]
As \(b\to+\infty\), the scalar-curvature factor tends to \(4\pi\), and therefore
\[
\mathcal J_{X_k}([\omega_b])\nearrow 2\pi.
\]
This proves
\[
\sup_{\mathcal K^+(X_k)}\mathcal J_{X_k}([\omega])=2\pi.
\]
\end{proof}

Combining the estimates for the minimal ruled surface $X_0$ and its blowups
$X_k$ we obtain the following systolic inequality in the non-rational case.

\begin{theorem}\label{thm:ruled-g>=1}
Let $X\to B$ be a ruled surface (not necessarily minimal) fibred over a complex curve $B$ of genus
$g\ge1$, and let $\omega$ be a PSC Kähler metric on $X$. Then
\[
\syst_2(\omega)\cdot\min_X S(\omega)\le 4\pi.
\]
Moreover, equality holds if and only if \(B\) is an elliptic curve, \(X\) is minimal and isometrically covered by \(\PP^1\times\CC\), so that the induced metric \(\omega\) has constant scalar curvature with holomorphic \(2\)-systole realized by the \(\PP^1\)-fibre.
\end{theorem}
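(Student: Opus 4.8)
The plan is to combine the three facts already in hand — the reduction inequality of Theorem~\ref{thm:finite-J}(1), the minimal-surface computation of Proposition~\ref{prop:ruled-J0-bound}, and the blow-up bound of Lemma~\ref{lem:Jk-ruled-g>=1} — and then to trace the equality conditions backwards. First I would record the hypotheses needed to apply Theorem~\ref{thm:finite-J}(1): any ruled surface is projective, and a PSC Kähler class lies in $\mathcal{K}^+(X)$ because $\hat S([\omega])=4\pi\,c_1(X)\cdot[\omega]/[\omega]^2>0$ forces $c_1(X)\cdot[\omega]>0$. Theorem~\ref{thm:finite-J}(1) then yields $\min_X S(\omega)\cdot\syst_2(\omega)\le\mathcal{J}_X([\omega])$, so the whole problem reduces to bounding $\mathcal{J}_X([\omega])$ by $4\pi$.

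For the upper bound I would split according to minimality. If $X=X_0$ is minimal (geometrically ruled over $B$ with $g\ge1$), Proposition~\ref{prop:ruled-J0-bound} gives $\mathcal{J}_{X_0}([\omega])\le\sup_{\mathcal{K}(X_0)}\mathcal{J}_{X_0}=4\pi$. If instead $X=\blk X_0$ with $k\ge1$, then Lemma~\ref{lem:Jk-ruled-g>=1}, together with the worst-case remark preceding it (non-general configurations can only shrink $\sysh_2$, hence $\mathcal{J}$), gives $\mathcal{J}_X([\omega])\le 2\pi<4\pi$. In both cases $\min_X S(\omega)\cdot\syst_2(\omega)\le 4\pi$, and the inequality is strict whenever $X$ is a proper blow-up.

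For rigidity, suppose $\min_X S(\omega)\cdot\syst_2(\omega)=4\pi$. The blow-up case is ruled out since it produces a value $<4\pi$, so $X=X_0$ must be minimal. Equality in Theorem~\ref{thm:finite-J}(1) forces $\omega$ to be cscK and $\syst_2(\omega)$ to be realized by a holomorphic $1$-cycle, while $\mathcal{J}_{X_0}([\omega])=4\pi$ means the supremum of Proposition~\ref{prop:ruled-J0-bound} is actually attained; by that proposition this occurs only for $g=1$ and $b/a\ge e+1$. Hence $B$ is an elliptic curve, and writing $[\omega]=aC_0+bF$ the attainment condition reads $[\omega]\cdot C_0=b-ea\ge a=[\omega]\cdot F$, so $\sysh_2([\omega])=\min\{a,\,b-ea\}=a=\omega\cdot F\le\omega\cdot C_0$; being a calibrated cycle of least $\omega$-area, the $\PP^1$-fibre realizes $\syst_2(\omega)$. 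Conversely, on the product model one verifies directly that the metric is cscK with the fibre achieving the systole, so these conditions indeed give equality.

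The main obstacle is the final structural step: upgrading ``$X_0$ is a cscK geometrically ruled surface over an elliptic curve $E$'' to the explicit rigid model whose Riemannian universal cover is $\PP^1\times\CC$ with the product of the round and flat metrics. Here the algebraic optimisation no longer suffices, and I would invoke the theory of cscK metrics on ruled surfaces: the existence of a cscK metric on $\PP(\mathcal{E})\to E$ forces $\mathcal{E}$ to be polystable — equivalently, projectively flat in the sense of Narasimhan–Seshadri — and uniqueness of cscK metrics in a fixed Kähler class then identifies $\omega$ with the associated locally homogeneous metric. In the present normalization $e\ge0$, polystability means $\mathcal{E}=L_1\oplus L_2$ with $\deg L_1=\deg L_2$, hence $e=0$; the projectively flat structure lifts, on the universal cover, to the product of the Fubini–Study metric on $\PP^1$ and the flat metric on $\CC$, with $\pi_1(E)\cong\ZZ^2$ acting by translations on $\CC$ and by a unitary representation on $\PP^1$. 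This is exactly the covering asserted in the statement, and assembling it with the calibration argument above closes the equality characterization.
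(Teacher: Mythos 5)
Your proposal is correct and follows essentially the same route as the paper: the inequality and the identification of the equality conditions (cscK, elliptic base, $\PP^1$-fibre realizing the systole) are obtained exactly as in the text by combining Theorem~\ref{thm:finite-J}(1), Proposition~\ref{prop:ruled-J0-bound} and Lemma~\ref{lem:Jk-ruled-g>=1}, and your structural rigidity step likewise passes through the Apostolov--T{\o}nnesen-Friedman equivalence between cscK metrics on $\PP(\mathcal{E})$ and polystability of $\mathcal{E}$. The only divergence is in the final identification of the metric: the paper concludes via Fujiki's quasi-stability and generalized K\"ahler--Einstein rigidity, whereas you use the Narasimhan--Seshadri projectively flat description together with uniqueness of cscK metrics in a fixed K\"ahler class --- equivalent means to the same end (note only that your claim ``polystable $\Rightarrow$ $\mathcal{E}=L_1\oplus L_2$'' needs the normalization $e\ge0$, since over an elliptic curve a stable indecomposable rank-$2$ bundle would give $e=-1$, a case both you and the paper implicitly exclude).
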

\begin{proof}
By Theorem~\ref{thm:finite-J} (1), Proposition~\ref{prop:ruled-J0-bound}, and Lemma~\ref{lem:Jk-ruled-g>=1}, every PSC K\"ahler metric \(\omega\) on a ruled surface over a base of genus \(g\ge1\) satisfies
\[
\syst_2(\omega)\cdot\min_X S(\omega)
\le \mathcal J_X([\omega])
\le 4\pi.
\]
Moreover, if \(X\) is non-minimal, then Lemma~\ref{lem:Jk-ruled-g>=1} gives the sharper bound
\[
\mathcal J_X([\omega])\le 2\pi,
\]
so equality in
\[
\syst_2(\omega)\cdot\min_X S(\omega)\le 4\pi
\]
can occur only when \(X\) is minimal. By Proposition~\ref{prop:ruled-J0-bound}, the minimal case attains the value \(4\pi\) only when \(g=1\). Equality in Theorem~\ref{thm:finite-J} (1) then forces \(\omega\) to have constant scalar curvature and the holomorphic \(2\)-systole to be realized by the \(\PP^1\)-fibre. 

It remains to prove that, in this equality case, \(X\) is isometrically covered by \(\PP^1\times\CC\). Since \(X\xrightarrow{\pi}B\) is a minimal ruled surface over an elliptic curve, there exists a rank-\(2\) holomorphic vector bundle \(\mathcal E\) on \(B\) such that \(X=\PP(\mathcal E)\) and \(\pi\) is the bundle projection (see \cite[Chapter~V]{Hartshorne77}). By the classification of cscK metrics on ruled surfaces due to Apostolov and T{\o}nnesen-Friedman \cite[Theorem~2]{ApostolovTonnesen06}, the existence of a cscK metric on \(X=\PP(\mathcal E)\) is equivalent to the slope-polystability of \(\mathcal E\). Hence \(\mathcal E\) is polystable. By Fujiki's equivalences \cite[Lemma~2]{Fujiki92}, this implies that \(X\) is quasi-stable. Equivalently, there exists a projective unitary representation
\[
\rho:\pi_1(B)\longrightarrow PU(2)\subset PGL_2(\CC)
\]
such that \(X\) is biholomorphic to the suspension quotient
\[
X_\rho := (\widetilde B\times \PP^1)/\pi_1(B),
\qquad
\gamma\cdot(z,[v])=(\gamma z,\rho(\gamma)[v]),
\]
where \(\widetilde B\cong\CC\) is the universal cover. Fix a biholomorphism \(\Phi:X\stackrel{\sim}{\to}X_\rho\), and set
\[
\omega_\rho:=(\Phi^{-1})^*\omega.
\]
Then \(\omega_\rho\) is a cscK K\"ahler metric on \(X_\rho\), and \(\Phi\) is an isometry between \((X,\omega)\) and \((X_\rho,\omega_\rho)\).

Since \(g(B)=1\), Fujiki's rigidity result \cite[Lemma~10]{Fujiki92} shows that any cscK K\"ahler metric on \(X_\rho\) is a generalized K\"ahler--Einstein metric in the sense of \cite[\S3]{Fujiki92}. It follows that \(X_\rho\) is isometrically covered by \(\PP^1\times\CC\) endowed with the product K\"ahler metric \(\omega_{\mathrm{FS}}\oplus\omega_{\mathrm{flat}}\). Therefore \(X\) is also isometrically covered by \(\PP^1\times\CC\).
\end{proof}

\subsection{Rational ruled surfaces}
In this subsection we treat the case where the base curve is rational, so that \(B\cong\PP^1\) and \(X_0 = \FF_e \longrightarrow \PP^1\) is a Hirzebruch surface with invariant \(e\ge 0\). We denote by
\(X_k = \blk(\FF_e)\) the blow-up of \(\FF_e\) at $k$ points in very general positions. 

\begin{proposition}\label{prop:Jk-g0}
Let \(\FF_e\) be a Hirzebruch surface with invariant \(e\ge 0\), and let
\(X_1 = \mathrm{Bl}_p(\FF_e)\) be its blow-up at one point. Then:
\begin{enumerate}[(1)]
  \item If \(e=0\) (so \(X_0\cong\PP^1\times\PP^1\)), one has
  \[
  \sup_{\mathcal{K}^+(X_1)}
  \mathcal{J}_{X_1}([\omega]) \;=\; 4\pi.
  \]
  \item If \(e\ge 1\), one has
  \[
  \sup_{\mathcal{K}^+(X_1)}
  \mathcal{J}_{X_1}([\omega])
  \;=\; 4\pi\cdot\frac{2e+5}{4e+3}.
  \]
\end{enumerate}
\end{proposition}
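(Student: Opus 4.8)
The plan is to turn the statement into an explicit optimization on $NS^1(X_1;\RR)=\langle C_0,F,E\rangle$, exactly in the spirit of the $\PP^2$ computation of Example~\ref{ex:J1-J2}; the only new ingredient compared with $\blk\PP^2$ is the extra fibre coordinate $b$. Writing a Kähler class as $[\omega]=aC_0+bF-tE$ with $a>0,\ b>ae,\ t>0$, I would first record, from the intersection formulas above for $g=0,\ k=1$,
\[
[\omega]^2=2ab-ea^2-t^2,\qquad c_1(X_1)\cdot[\omega]=2b+(2-e)a-t .
\]
The first and genuinely geometric step is to identify $\sysh_2([\omega])$. Since the blown-up point is in very general position, I would compute the Mori cone $\mori(X_1)$ (equivalently its dual nef cone) and read off the primitive classes on its extremal rays, as these are the only competitors for the infimum defining $\sysh_2$. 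Here the invariant $e$ forces a genuine dichotomy. For $e\ge1$ the minimal section $C_0$ is rigid and avoids $p$, so $C_0-E$ is \emph{not} effective; the first moving section $C_0+eF$ contributes only the dominated strict transform $C_0+eF-E$ (of $\omega$-area $b-t>b-ea$, since $ea\ge a>t$), and $\mori(X_1)=\RR_{\ge0}\langle E,\,F-E,\,C_0\rangle$, giving
\[
\sysh_2([\omega])=\min\{\,t,\ a-t,\ b-ea\,\}.
\]
For $e=0$ the section $C_0$ moves, its strict transform $C_0-E$ is an effective $(-1)$-curve (indeed $X_1\cong\bl_2\PP^2$), and $\mori(X_1)=\RR_{\ge0}\langle E,\,F-E,\,C_0-E\rangle$, so $\sysh_2([\omega])=\min\{t,\ a-t,\ b-t\}$.

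By degree-$0$ homogeneity I may normalize $a=1$, so that with $m:=\sysh_2([\omega])$,
\[
\mathcal{J}_{X_1}([\omega])=4\pi\,m\cdot\frac{2b+2-e-t}{2b-e-t^2}.
\]
For $e\ge1$ I would optimize in two stages. Setting $\mu:=\min\{t,1-t\}$ with $t\in(0,1)$, the ratio $\tfrac{c_1\cdot\omega}{\omega^2}=1+\tfrac{t^2-t+2}{2b-e-t^2}$ is strictly decreasing in $b$, while $m=\min\{\mu,\,b-e\}$ is nondecreasing in $b$. I expect this trade-off to be resolved exactly at the threshold $b=e+\mu$: for $b>e+\mu$ the systole factor is constant and the ratio strictly decreases, whereas for $e<b<e+\mu$ one has $m=b-e$ and a short computation (using $e>t^2$ and $t^2-t+2<2$) gives $\partial_b\mathcal{J}_{X_1}>0$. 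This collapses the problem to one variable: at $b=e+\mu$ one gets $\mathcal{J}_{X_1}/4\pi=t\,\tfrac{e+t+2}{e+2t-t^2}=:\psi(t)$ for $t\le\tfrac12$, and the analogous $(1-t)\tfrac{e+4-3t}{e+2-2t-t^2}$ for $t\ge\tfrac12$. Checking $\psi'>0$ on $(0,\tfrac12]$ and that the other branch is decreasing on $[\tfrac12,1)$, the maximum sits at the balanced point $t=\tfrac12,\ b=e+\tfrac12$ (the class $C_0+(e+\tfrac12)F-\tfrac12 E$), with value $\psi(\tfrac12)=\tfrac{2e+5}{4e+3}$, which is exactly (2).

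For $e=0$ the same two-stage reduction applies with $m=\min\{t,1-t,b-t\}$, the optimal $b$ being $t+\mu$; the balanced configuration $t=\tfrac12,\ b=1$ (the class $C_0+F-\tfrac12 E$) yields $\mathcal{J}_{X_1}=4\pi$, giving (1). As a consistency check I would invoke the isomorphisms $\bl_1\FF_0\cong\bl_2\PP^2$ and $\bl_1\FF_1\cong\bl_2\PP^2$: the former reproduces the value $4\pi$ already computed in Example~\ref{ex:J1-J2}, and the latter forces $4\pi\tfrac{2e+5}{4e+3}$ to return $4\pi$ at $e=1$, which it does.

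The step I expect to be the main obstacle is the determination of $\sysh_2$, i.e.\ proving that the three listed classes really are the only competitors for the infimum and correctly detecting the $e=0$ versus $e\ge1$ dichotomy. The two-variable optimization, though it requires a careful first-in-$b$-then-in-$t$ monotonicity analysis, is elementary calculus; the genuine input is the algebraic geometry of sections of $\FF_e$ under a single blow-up — the rigidity of the negative section $C_0$ when $e\ge1$, the fact that the first moving section $C_0+eF$ yields only the dominated strict transform $C_0+eF-E$, and the appearance of the extra $(-1)$-curve $C_0-E$ precisely when $e=0$. Getting this bookkeeping right is exactly what produces the clean split between the two formulas in the statement.
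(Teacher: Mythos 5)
Your proposal is correct and follows essentially the same route as the paper: determine the extremal rays of $\overline{\operatorname{NE}}(X_1)$ (with the same $e=0$ versus $e\ge1$ dichotomy governing whether $C_0-E$ is effective), read off $\sysh_2([\omega])$, and reduce to an elementary scale-invariant two-variable optimization, arriving at the same extremal classes ($2C_0+(2e+1)F-E$ for $e\ge1$, resp.\ $F_1+F_2-\tfrac12 E$ for $e=0$) and the same values. The only differences are in execution — you normalize $a=1$ rather than $t=1$, and you replace the paper's critical-point computation for $\phi_e(x,z)$ by a staged monotonicity argument (first in $b$, then in $t$), which if anything handles the non-smooth $\min$ more transparently.
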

\begin{proof} 
We treat separately the cases $e=0$ and $e\ge 1$. When $e=0$, in which case $\FF_0\cong \PP^1 \times \PP^1$. Write the two rulings as \(F_1 := [\PP^1\times\{\mathrm{pt}\}],~F_2 := [\{\mathrm{pt}\}\times\PP^1]\), so that \(F_1^2 = F_2^2 = 0\), and \(F_1\cdot F_2 = 1\). Any K\"ahler class on $X_1$ can be written as $\omega=aF_1+bF_2-tE$. Note that the surface $X_1$ is the degree $7$ del Pezzo surface and its Mori cone $\overline{\mathrm{NE}}(X_1)$ is generated by the three $(-1)$-curves \(E,~F_1-E\), and \(F_2-E\). Hence, 
\[ 
\sysh_2([\omega]) = \min\{t,\ a-t,\ b-t\}, 
\] 
and 
\[ 
\mathcal{J}_{X_1}([\omega]) = \min\{t,\ a-t,\ b-t\} \cdot \frac{4\pi(2a+2b - t)}{2ab - t^2}. 
\] 
Setting \(x := a/t, ~y := b/t\), so that $a=xt$, $b=yt$, then the Kähler conditions $t>0$, $a-t>0$, $b-t>0$, and $[\omega]^2>0$ translate into \( x>1,\quad y>1,\) and \( 2xy>1 \). Thus, 
\[ 
\mathcal{J}_{X_1}([\omega]) = 4\pi\,\min\{1,x-1,y-1\}\frac{2x+2y -1}{2xy -1}, 
\] 
defined on $\{(x,y)\mid x>1,\ y>1\}$. A straightforward piecewise analysis of the two-variable function
\[
(x,y)\longmapsto \min\{1,x-1,y-1\}\frac{2x+2y-1}{2xy-1},
\qquad x>1,\ y>1,
\]
shows that its supremum is \(1\), attained uniquely at \(x=y=2\). We then conclude \[ \sup_{[\omega]}\mathcal{J}_{X_1}([\omega]) = 4\pi\sup_{x>1,y>1}\left(\min\{1,x-1,y-1\}\frac{2x+2y -1}{2xy -1}\right) = 4\pi. \] Moreover, the supremum is achieved by the class $[\omega]=F_1+F_2-\tfrac12E$. This proves (1). 

We now assume $e\ge 1$ and write the Kähler class on $X_1$ as \([\omega]=aC_0 + bF - tE,~a>0,\ b>0,\ t>0\). In this case, he Mori cone $\overline{\mathrm{NE}}(X_1)$ is generated by three extremal rays $E$, $F-E$ and $C_0$. Hence, 
\[ 
\syst_2([\omega]) =  \min\{t,\ a-t,\ b-ea\}. 
\] 
Introduce the scale-invariant variables again \(x := a/t\), and \(y := b/t\), so that \(a=xt\), \(b=yt\). The Kähler inequalities \(t>0\), \(a-t>0\), \(b-ea>0\) are equivalent to
\[
x>1,\qquad y>ex.
\]
It is convenient to write
\(
y = ex+z
\)
with \(z>0\), so that
\[
\mathcal{J}_{X_1}([\omega])
= 4\pi\,
   \min\{1,\ x-1,\ z\}
   \frac{(e+2)x + 2z -1}{e x^2 + 2xz -1}:=4\pi~\phi_e(x,z),
\]
defined on the domain
\[
\mathcal{D}_e
:= \{(x,z)\in\RR^2\mid x>1,\ z>0,\ e x^2 + 2xz -1>0\}.
\]
The function \(\phi_e(x,z)\) is piecewise rational, according to the regions where \(\min\{1,x-1,z\}\) is realized by one of its three entries. A straightforward inspection on these regions shows that \(\phi_e\) attains its global maximum at \((x,z)=(2,1)\), with value
\[
\phi_e(2,1)=\frac{2e+5}{4e+3}.
\]
The corresponding ray in the Kähler cone is given by
\(
[\omega] = 2t\,C_0 + (2e+1)t\,F - tE
\),
and we obtain
\[
\sup_{\mathcal{K}^+(X_1)}\mathcal{J}_{X_1}([\omega])
= 4\pi\cdot\frac{2e+5}{4e+3}.
\]
This proves (2) and completes the proof of the lemma. 
\end{proof}

Having dealt with the case of a single blow-up of a Hirzebruch surface, we now
turn to blowing up several points on \(\FF_0\cong\PP^1\times\PP^1\). As in the
\(\PP^2\)-case (see Proposition~\ref{prop:mass-shift} in Section~\ref{sec:P2}),
the key input is a simple ``mass-shift'' optimization for the exceptional
parameters \(t_i\), which allows us to reduce to an extremal configuration and
then perform a purely algebraic estimate.

\begin{lemma}\label{lem:Phi-le-1}
Let \(X_0=\PP^1 \times \PP^1\) and let \(X_k=\blk(X_0)\) be the blow-up of \(X_0\) at \(k\ge2\) points in very general position. Then
\[
\sup_{\mathcal{K}^+(X_k)}\mathcal{J}_{X_k}([\omega])\;\le\;4\pi.
\]
\end{lemma}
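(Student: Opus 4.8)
The plan is to mimic exactly the mass-shift strategy that worked for $\blk\PP^2$, transposed to the notation of $\PP^1\times\PP^1$. Writing a K\"ahler class as $[\omega]=aF_1+bF_2-\sum_{i=1}^k t_iE_i$ (normalizing $a=1$ by scale-invariance of $\mathcal{J}_{X_k}$), I would first record the intersection data
\[
[\omega]^2=2ab-\sum_i t_i^2,\qquad c_1(X_k)\cdot[\omega]=2a+2b-\sum_i t_i,
\]
so that with $S:=\sum_i t_i$ and $Q:=\sum_i t_i^2$ one has $\mathcal{J}_{X_k}([\omega])=4\pi\,m\,(2a+2b-S)/(2ab-Q)$, where $m:=\sysh_2([\omega])$. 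The goal is then the purely numerical bound $m(2a+2b-S)/(2ab-Q)\le1$.

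Next I would extract the coarse constraints coming from a finite list of effective curves. The exceptional divisors $E_i$ give $m\le t_i$, and the strict transforms of the two rulings through a blown-up point, with classes $F_1-E_i$ and $F_2-E_i$, give $m\le a-t_i$ and $m\le b-t_i$. (Since the points are in very general position, no two lie on a common ruling line, so these classes are genuinely effective.) Summing $m\le t_i$ yields $S\ge km$, while the ruling constraints control $S$ from above in terms of $a,b,m$. Having fixed the coarse data $(m,S)$ (and $a,b$), I would invoke Proposition~\ref{prop:mass-shift}: since $Q\mapsto 1/(2ab-Q)$ is increasing and the numerator factor $m(2a+2b-S)$ is fixed, the worst case is $Q=Q_{\max}(m,S)=(k-1)m^2+(S-(k-1)m)^2$, attained at $t^*=(m,\dots,m,S-(k-1)m)$. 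This reduces the whole problem to maximizing the two-variable function
\[
\phi_k(m,S):=m\,\frac{2a+2b-S}{2ab-(k-1)m^2-(S-(k-1)m)^2}
\]
over the coarse admissible region, followed by an optimization in $b$ as well.

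The final step is the elementary optimization of $\phi_k$ over this region and a check that the maximum never exceeds $1$. Following the $\PP^2$ template, I would fix $m$, substitute $y:=S-(k-1)m$, and show that $\phi_k$ is monotone in $y$ on the feasible interval so that the extremum sits at a boundary value of $S$ dictated by one of the ruling constraints; this collapses $\phi_k$ to a one-variable function of $m$ whose sign of derivative one reads off directly. I expect the extremal configuration to be the symmetric one $t_1=\dots=t_k=\tfrac12$ with $a=b=1$, mirroring the $k=1$ computation in Proposition~\ref{prop:Jk-g0}(1) where the maximum $4\pi$ is achieved at $F_1+F_2-\tfrac12E$; the presence of the extra exceptional curves for $k\ge2$ should only shrink the feasible region and hence keep $\sup\phi_k\le1$, giving the stated bound $4\pi$. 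The main obstacle I anticipate is bookkeeping: unlike $\PP^2$ there are now \emph{two} independent ruling parameters $a,b$ rather than a single scale, so the reduction to a one-variable problem requires optimizing jointly in $b$ and in the aggregate $(m,S)$, and one must verify that the ruling constraints $m\le a-t_i$, $m\le b-t_i$ remain active at the extremal vector $t^*$ (i.e.\ that $m$ genuinely remains the systole there). Verifying feasibility of $t^*$ and that no competing effective class undercuts $m$ — exactly the step that in the $\PP^2$ proof produced the constraint $S\le 1+(k-3)m$ — is where the computation is most delicate.
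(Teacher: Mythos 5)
Your setup coincides exactly with the paper's: the same normalization $[\omega]=F_1+bF_2-\sum_i t_iE_i$ with the smaller ruling coefficient scaled to $1$ (so $b\ge 1$), the same three families of effective curves $E_i$, $F_1-E_i$, $F_2-E_i$ giving the coarse constraints $m\le t_i\le 1-m$, $t_i\le b-m$, $0<m\le\tfrac12$, and the same mass-shift reduction via Proposition~\ref{prop:mass-shift} to the numerical bound $\mathcal{J}_{X_k}([\omega])\le 4\pi\,\phi(b,m,S)$ with $\phi(b,m,S)=m(2b+2-S)\big/\bigl(2b-(k-1)m^2-(S-(k-1)m)^2\bigr)$. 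The gap is in your final step. You propose to copy the $\PP^2$ template: fix $m$, set $y=S-(k-1)m$, prove $\phi$ is monotone in $y$ so the extremum sits at the upper boundary of $S$, then finish with a one-variable analysis. But $\phi$ is \emph{not} monotone in $y$ on the feasible interval $[m,1-m]$. Writing $c=m\bigl(2b+2-(k-1)m\bigr)$ and $A=2b-(k-1)m^2$, one has $\phi=(c-my)/(A-y^2)$ and
\[
\partial_y\phi=\frac{-mA-my^2+2cy}{(A-y^2)^2},
\]
whose numerator at $y=m$ equals $m\bigl(4m(b+1)-km^2-2b\bigr)$; at $b=1$ this is $m(8m-km^2-2)<0$ for every $m<\tfrac14$. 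So $\phi$ decreases near the left endpoint, and depending on the parameters its maximum sits at $y=1-m$, at $y=m$ (e.g.\ for $k\ge9$ and $m$ near $\tfrac12$ one checks $\partial_y\phi<0$ throughout, so the maximum is at $S=km$, a boundary dictated by the exceptional curves rather than a ruling constraint), so the clean ``monotone, hence boundary'' collapse does not go through. Relatedly, your expected extremal configuration $t_1=\dots=t_k=\tfrac12$, $a=b=1$ is not even K\"ahler once $k\ge 8$ (then $[\omega]^2=2-k/4\le 0$), so it cannot be the universal maximizer. Finally, you flag the joint optimization in $b$ as the main obstacle but leave it unresolved.

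The paper's proof departs from the $\PP^2$ template at precisely this point, and the change is what makes the argument close in one stroke. Instead of maximizing the ratio $\phi=N/D$, it proves $D-N\ge 0$ directly, where $N=m(2b+2-S)$ and $D=2b-(k-1)m^2-\bigl(S-(k-1)m\bigr)^2$. With $z=S-(k-1)m\in[m,1-m]$ one computes $D-N=2b(1-m)-2m+mz-z^2$, which is increasing in $b$ (worst case $b=1$) and satisfies $\partial_z\bigl(2-4m+mz-z^2\bigr)=m-2z<0$ (worst case $z=1-m$), leaving only the single inequality $1-m-2m^2=(1-2m)(1+m)\ge 0$, true since $m\le\tfrac12$. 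The monotonicity that fails for the ratio holds trivially for the difference; this is the one idea your proposal is missing, and with it both of your obstacles (the $y$-analysis and the $b$-optimization) disappear simultaneously.
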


\begin{proof}
Let \(F_1=[\PP^1 \times \{\mathrm{pt}\}]\) and
\(F_2=[\{\mathrm{pt}\}\times \PP^1]\) be the two rulings, and let
\(E_1,\dots,E_k\) be the exceptional curves. By the symmetry between the two rulings and the scale invariance of \(\mathcal J_{X_k}\), we may, after possibly interchanging \(F_1\) and \(F_2\) and rescaling, assume that the coefficient of the smaller ruling is \(1\). More precisely, any Kähler class can be written
(after possibly interchanging \(F_1,F_2\) and rescaling) in the form \([\omega]=F_1 + bF_2 - \sum t_i E_i,\) for \( b\ge1,\ t_i>0\). Set \(m=\sysh_2([\omega])\). The effective curves \(E_i,\ F_1-E_i,\ F_2-E_i\) give candidates for $\sysh_2([\omega])$, hence
\[
[\omega]\cdot E_i=t_i\ge m,\quad
[\omega]\cdot(F_1-E_i)=1-t_i\ge m,\quad
[\omega]\cdot(F_2-E_i)=b-t_i\ge m.
\]
Hence, for all \(i\),
\begin{equation}\label{eq:basic-ti-bounds}
m\;\le\;t_i\;\le\;1-m,\qquad
t_i\;\le\;b-m.
\end{equation}
In particular \(0<m\le\tfrac12\) and \(b\ge1\).

In this setting, we again apply the mass–shifting reduction introduced in the $\PP^2$ case. We will not elaborate further on this point and refer the reader to Section~\ref{sec:P2} for details. Write \(S:=\sum_{i=1}^k t_i\) and \(Q:=\sum_{i=1}^k t_i^2\). Thus
\begin{equation}\label{eq:J-P1P1-Xk}
\mathcal{J}_{X_k}([\omega])
=4\pi\,m\cdot\frac{2b+2-S}{2b-Q}.
\end{equation}

Fix \((b,m,S)\) in the range \(0<m\le\tfrac12,\quad b\ge1,\quad km\le S\le 1+(k-2)m\). For such parameters any admissible \(k\)-tuple \(t=(t_1,\dots,t_k)\) satisfies \(t_i\ge m\) and \(\sum t_i=S\). By Proposition~\ref{prop:mass-shift}, under the constraints
\(
t_i\ge m,\ \sum t_i=S
\),
the quadratic form \(Q(t)=\sum t_i^2\) is maximized precisely when
\[
t=(\underbrace{m,\dots,m}_{k-1},\,S-(k-1)m)
\]
up to permutation, and in that case
\[
Q_{\max}=(k-1)m^2+\bigl(S-(k-1)m\bigr)^2.
\]
Consequently, for each fixed triple \((b,m,S)\), we have
\[
\mathcal{J}_{X_k}([\omega])
\;\le\;
4\pi\,m\cdot\frac{2b+2-S}{2b-Q_{\max}}
=:4\pi\,\phi(b,m,S),
\]
where
\begin{equation}\label{eq:F-def}
\phi(b,m,S)
=\frac{m(2b+2-S)}{2b-(k-1)m^2-\bigl(S-(k-1)m\bigr)^2}.
\end{equation}
Set
\[
N:=m(2b+2-S),\qquad
D:=2b-(k-1)m^2-\bigl(S-(k-1)m\bigr)^2,
\]
so that \(\phi=N/D\) on the domain \(\{D>0\}\cap\{N>0\}\). We claim that
\[
D(b,m,S)\;\ge\;N(b,m,S)
\]
for all admissible \((b,m,S)\), which immediately implies \(\phi(b,m,S)\le1\).

To verify the claim, a direct computation gives
\[
\begin{aligned}
D-N
&=2b-(k-1)m^2-\bigl(S-(k-1)m\bigr)^2 - m(2b+2-S)\\
&=2b(1-m)-2m+m(S-(k-1)m)- \left(S-(k-1)m\right)^2.
\end{aligned}
\]
Introduce \(z:=S-(k-1)m\) with \(m \le z \le 1-m\). In terms of \((m,z)\) we obtain
\[
D-N
=2b(1-m)-2m+mz-z^2.
\]
Observe that \(D-N\) is strictly increasing as a function of \(b\). Under the constraint \(b\ge1\) it attains its minimum at \(b=1\). Thus
\[
D(b,m,S)-N(b,m,S)\;\ge\;G(m,z),
\]
where
\[
G(m,z):=D(1,m,S)-N(1,m,S)
=2-4m+mz-z^2.
\]
On the interval \(z\in[m,1-m]\) we have
\(
\partial_z G(m,z)=m-2z<0
\),
so \(G(m,\cdot)\) is strictly decreasing, and hence
\[
\min_{z\in[m,1-m]}G(m,z)=G(m,1-m).
\]
A short calculation yields
\[
G(m,1-m)=-(2m^2+m-1)\ge 0.
\]
Therefore \(G(m,z)\ge0\) for all \(z\in[m,1-m]\). Combining this with the monotonicity in \(b\) gives
\[
D(b,m,S)-N(b,m,S)\;\ge\;0
\]
for every admissible triple \((b,m,S)\), and hence \(\phi(b,m,S)\le1\).

Consequently,
\[
\mathcal{J}_{X_k}([\omega])
\;\le\;4\pi\,\phi(b,m,S)\;\le\;4\pi
\]
for Kähler class \([\omega]\in \mathcal{K}^+(X_k)\) on \(X_k\). This shows that
\[
\sup_{\mathcal{K}^+(X_k)}\mathcal{J}_{X_k}([\omega])\le4\pi,
\]
which is the desired estimate.
\end{proof}
We now consider blow-ups of the general Hirzebruch surfaces \(\FF_e\) with \(e\ge1\). The argument is parallel, with the only new feature being the contribution of the negative section \(C_0\) in the intersection computations. The final bound is again \(4\pi\), and when \(e=1\), this recovers the case of \(\PP^2\) blown up at one point via the identification \(\FF_1\cong\PP^2\#\overline{\PP^2}\).

\begin{lemma}\label{lem:Jk-upper-Fe}
Let \(X_0=\FF_e\) be a Hirzebruch surface with \(e\ge1\), and let \(X_k=\blk(X_0)\) be the blow-up of \(X_0\) at \(k\ge2\) points in very general position. Then
\[
\sup_{\mathcal{K}^+(X_k)}\mathcal{J}_{X_k}([\omega])\;\le\;4\pi.
\]
\end{lemma}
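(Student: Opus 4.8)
The plan is to transcribe the mass-shift argument of Lemma~\ref{lem:Phi-le-1} (the case $e=0$), the one new ingredient being the constraint contributed by the negative section $C_0$. First I would normalize: using the degree-zero homogeneity of $\mathcal{J}_{X_k}$, set $a=1$ and write $[\omega]=C_0+bF-\sum_{i=1}^k t_iE_i$ with $b>e$ and $t_i>0$. From the intersection data recorded for $\blk\FF_e$ (with $g=0$) one has $[\omega]^2=2b-e-\sum t_i^2$ and $c_1(X_k)\cdot[\omega]=2b+2-e-\sum t_i$, so writing $m:=\sysh_2([\omega])$, $S:=\sum t_i$, $Q:=\sum t_i^2$,
\[
  \mathcal{J}_{X_k}([\omega])=4\pi\,m\,\frac{2b+2-e-S}{2b-e-Q}.
\]
The decisive step is to list the binding effective curves. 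The exceptional divisors $E_i$ and the strict transforms $F-E_i$ of the fibres through the blown-up points give $t_i\ge m$ and $1-t_i\ge m$, hence $m\le t_i\le 1-m$ and $0<m\le\tfrac12$. The genuinely new feature for $e\ge1$ is that, since no blown-up point lies on $C_0$, its strict transform is again $C_0$ with $[\omega]\cdot C_0=b-e$; effectivity of $C_0$ therefore forces $b-e\ge m$, i.e.\ $b\ge e+m$. This lower bound on $b$ is what the estimate will hinge on.

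Next I would run the mass-shift reduction exactly as in Section~\ref{sec:P2} (I will not repeat its details). Fixing $(b,m,S)$ with $km\le S\le 1+(k-2)m$, so that the extremal vector is feasible, Proposition~\ref{prop:mass-shift} gives $Q\le Q_{\max}=(k-1)m^2+(S-(k-1)m)^2$, attained at $t^*=(m,\dots,m,\,S-(k-1)m)$. Since $N:=m(2b+2-e-S)>0$ on $\mathcal{K}^+(X_k)$ and $2b-e-Q$ decreases in $Q$, this yields $\mathcal{J}_{X_k}([\omega])\le 4\pi\,N/D$ with $D:=2b-e-Q_{\max}$; it then suffices to prove $D\ge N$, which forces $D\ge N>0$ and hence $N/D\le1$.

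The heart of the matter is a short algebraic estimate. Writing $z:=S-(k-1)m\in[m,1-m]$, a direct expansion gives
\[
  D-N=(2b-e)(1-m)-2m+mz-z^2,
\]
which is increasing in $b$. Feeding in the section bound $b\ge e+m$ (so $2b-e\ge e+2m$) produces
\[
  D-N\;\ge\;e(1-m)-2m^2+mz-z^2=:G(m,z).
\]
Since $\partial_zG=m-2z<0$ on $z\ge m$, the function $G$ is decreasing in $z$, so its minimum over $z\in[m,1-m]$ is at $z=1-m$, where $G(m,1-m)=e(1-m)-4m^2+3m-1$. Finally $e\ge1$ gives $e(1-m)\ge1-m$, whence $G(m,1-m)\ge 2m(1-2m)\ge0$ for $0<m\le\tfrac12$. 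Therefore $D\ge N$, so $\mathcal{J}_{X_k}([\omega])\le4\pi$, as desired.

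I expect the only real subtlety to be bookkeeping: correctly extracting the lower bound $b\ge e+m$ from the negative section $C_0$, which is indispensable. For $e=1$, for instance, the bare Kähler condition $b>e$ degenerates to $D-N\to -2m^2<0$ as $b\downarrow e$ at $z=1-m$, so it is the presence of $C_0$, and not merely positivity of $[\omega]^2$, that drives the estimate. Once this bound is in hand, everything else is a transcription of the $e=0$ computation, with $2b$ replaced throughout by $2b-e$.
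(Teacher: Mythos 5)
Your proof is correct and follows essentially the same route as the paper's: the same normalization $a=1$, the same coarse constraints from $E_i$, $F-E_i$, and $C_0$, the mass-shift reduction via Proposition~\ref{prop:mass-shift}, and the identical algebraic estimate $D-N=(2b-e)(1-m)-2m+mz-z^2\ge 0$, minimized first at $b=e+m$ and then at $z=1-m$. The one point where you are more explicit than the paper is in deriving the bound $b\ge e+m$ from effectivity of the negative section $C_0$ (the paper simply posits this range for $(b,m,S)$), and your observation that the bare K\"ahler condition $b>e$ would not suffice (e.g.\ $D-N\to-2m^2$ as $b\downarrow e$ when $e=1$, $z=1-m$) correctly identifies why that constraint is indispensable.
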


\begin{proof}
After rescaling, write any Kähler class on \(X_k\) in the form \([\omega] \;=\; C_0 + bF - \sum t_i E_i\), for \(b>e,\;t_i>0\). We use the same notations as before, so that
\begin{equation}\label{eq:J-Fe-general}
\mathcal{J}_{X_k}([\omega])
=4\pi\,m\cdot\frac{2b+2-e-S}{2b-e-Q}.
\end{equation}
Fix parameters \((b,m,S)\) in the range
\[
0<m\le\tfrac12,\quad b\ge e+m,\quad
km\le S\le 1+(k-2)m.
\]
By the mass–shift Proposition~\ref{prop:mass-shift}, under the constraints
\(
t_i\ge m,\ \sum t_i=S
\),
the quadratic form \(Q(t)=\sum t_i^2\) is maximized precisely when
\[
t=(\underbrace{m,\dots,m}_{k-1},\,S-(k-1)m)
\]
up to permutation, and in that case
\[
Q_{\max}=(k-1)m^2+\bigl(S-(k-1)m\bigr)^2.
\]
Hence, for each fixed triple \((b,m,S)\),
\[
\mathcal{J}_{X_k}([\omega])
\;\le\;
4\pi\,m\cdot\frac{2b+2-e-S}{2b-e-Q_{\max}}
=:4\pi\,\phi_{e,k}(b,m,S),
\]
where
\begin{equation}\label{eq:Fek-def}
\phi_{e,k}(b,m,S)
:=\frac{m\bigl(2b+2-e-S\bigr)}
        {2b-e-(k-1)m^2-\bigl(S-(k-1)m\bigr)^2}.
\end{equation}
We use the same trick as in the proof of Lemma \ref{lem:Phi-le-1} with just a slight difference when we deal with the extremum. Set
\[
N:=m\bigl(2b+2-e-S\bigr),\qquad
D:=2b-e-(k-1)m^2-\bigl(S-(k-1)m\bigr)^2,
\]
so that \(\phi_{e,k}=N/D\) on the domain \(\{D>0\}\cap\{N>0\}\). We claim that
\[
D(b,m,S)\;\ge\;N(b,m,S)
\]
for all admissible \((b,m,S)\), which immediately implies \(\phi_{e,k}(b,m,S)\le1\).

A direct computation gives
\begin{equation}\label{eq:DminusN-expanded-Fe}
\begin{aligned}
D-N
&=2b-e-(k-1)m^2-\bigl(S-(k-1)m\bigr)^2
      -m\bigl(2b+2-e-S\bigr)\\
&=2b(1-m)-e(1-m)-(k-1)m^2-\bigl(S-(k-1)m\bigr)^2-2m+mS.
\end{aligned}
\end{equation}
Note that \(D-N\) is strictly increasing as a function of \(b\). Under the constraint \(b\ge e+m\) it attains its minimum at
\(b_0:=e+m\). In particular, 
\begin{equation}\label{eq:monotone-in-b-Fe}
D(b,m,S)-N(b,m,S)\;\ge\;D(b_0,m,S)-N(b_0,m,S).
\end{equation}
Substituting \(b=b_0=e+m\) into \eqref{eq:DminusN-expanded-Fe} yields
\[
D(b_0,m,S)-N(b_0,m,S)
=e(1-m)-2m^2+mS-\bigl(S-(k-1)m\bigr)^2.
\]
Introduce \(z:=S-(k-1)m\) with \(m\ \le\ z\ \le\ 1-m\). In terms of \((m,z)\) we obtain
\begin{equation}\label{eq:Phi-Fe-def}
D(b_0,m,S)-N(b_0,m,S)
=F_e(m,z):=e(1-m)-2m^2+mz-z^2.
\end{equation}
Combining \eqref{eq:monotone-in-b-Fe} and \eqref{eq:Phi-Fe-def} we deduce
\[
D(b,m,S)-N(b,m,S)\;\ge\;F_e(m,z)
\]
for all admissible \((b,m,S)\). A short computation gives
\begin{align*}
\min_{z\in[m,1-m]}F_e(m,z)=F_e(m,1-m)
&=e(1-m)-2m^2+m(1-m)-(1-m)^2\\
&=e(1-m)-4m^2+3m-1\\
&=(e-1)+(3-e)m-4m^2 \\
&\ge 0,
\end{align*}
with strict inequality if $e \ge 2$. In particular, \(F_e(m,z)\ge0\) for all admissible \(m,z\), and hence \(\phi_{e,k}(b,m,S)\le1\).

Consequently,
\[
\mathcal{J}_{X_k}([\omega])
\;\le\;4\pi\,\phi_{e,k}(b,m,S)\;\le\;4\pi
\]
for every Kähler class \([\omega]\) on \(X_k\). Therefore
\[
\sup_{\mathcal{K}^+(X_k)}\mathcal{J}_{X_k}([\omega])\le4\pi,
\]
as claimed.
\end{proof}

Before turning to the global statement for rational ruled surfaces, it is instructive to isolate a concrete situation where the quantity $\mathcal{J}_{X_k}$ can be written down explicitly. In the regime of blowing up at most $e$ points on a Hirzebruch surface (namely $k\le e$) in very general position, the Mori cone is finitely generated by a short list of curves, so that $\sysh_2([\omega])$ and even $\mathcal{J}_{X_k}([\omega])$ reduce to an explicit finite-dimensional optimization problem in the parameters of the Kähler class. The following example makes this reduction precise.
\begin{example}\label{ex:small-blowup-Fe}
Fix an integer $e\ge1$ and let $X_0=\FF_e$ be the $e$-th Hirzebruch surface with section $C_0$ and fibre class $F$. Let $X_k:=\blk(\FF_e)$ be the blow-up of $k$ points $p_1,\dots,p_k$ with $ k\le e$ in very general position. In this setting, the Mori cone $\overline{\mathrm{NE}}(X_k)$ is polyhedral. Moreover, see for instance \cite[Proposition~2.4 and Lemma~3.4]{HJSS25}, every extremal ray of $\overline{\mathrm{NE}}(X_k)$ is generated by \(C_0,~E_i,~ F-E_i\) for \(1\le i\le k\). We now fix a Kähler class on $X_k$ and normalize it as in Lemma~\ref{lem:Jk-upper-Fe}:
\[
[\omega]=C_0 + bF -\sum_{i=1}^k t_iE_i,\qquad b>e,\ t_i>0.
\]
Using the intersection form on $X_k$ we obtain
\[
m:=\sysh_2([\omega])=\min\bigl\{t_i,\ 1-t_i,\ b-e:\ 1\le i\le k\bigr\}.
\]
From $t_i\ge m$ and $1-t_i\ge m$ we immediately deduce $0<m\le\frac12$. The functional $\mathcal{J}_{X_k}([\omega])$ takes the explicit form
\[
\mathcal{J}_{X_k}([\omega])
=4\pi\,m\cdot\frac{2b+2-e-S}{2b-e-Q}.
\]
and the parameters satisfy
\[
0<m\le\tfrac12,\quad 0<t_i<1,\quad b>e,\quad 2b-e-Q>0.
\]
Assume in addition that \(k\le 8\). Then a direct optimization of the above function under these constraints shows that the maximum is achieved at $(e+1/2,1/2,...,1/2)$. Consequently,
\[
\sup_{\mathcal{K}^+(X_k)}\mathcal{J}_{X_k}([\omega])
=
4\pi\,\frac{2e+6-k}{4e+4-k}
\le 4\pi,
\qquad k\le \min\{e,8\}.
\]
\end{example}
\begin{remark}
The threshold \(k=8\) in Example~\ref{ex:small-blowup-Fe} has a genuine structural meaning.  From the algebro-geometric point of view, since \(K_{\FF_e}^2=8\) and each blow-up decreases \(K^2\) by \(1\), one has
\(K_{X_k}^2=8-k\). Thus \(k=8\) is precisely the point where the sign of \(K_{X_k}^2\) changes. This is the analogue, for blow-ups of Hirzebruch surfaces, of the classical transition at \(k=9\) for blow-ups of \(\PP^2\).

The same threshold also appears directly in the optimization of \(\mathcal J_{X_k}\). Indeed, writing
\[
\Phi(b,t):=m\,\frac{2b+2-e-S}{2b-e-Q},
\qquad
m=\min\{t_i,1-t_i,b-e\},
\]
one computes
\[
\partial_b\Phi=\frac{2m(S-Q-2)}{(2b-e-Q)^2},
\qquad
S-Q=\sum_i t_i(1-t_i)\le \frac{k}{4}.
\]
Hence \(\partial_b\Phi\le0\) whenever \(k\le8\), so the maximum is forced to occur at the finite boundary \(b=e+m\). By contrast, for \(k\ge9\) this monotonicity fails. In fact, along the symmetric path\(t_1=\cdots=t_k=1/2\), the function \(\Phi\) increases with \(b\) and approaches \(1/2\) as \(b\to\infty\). In this case, \(\mathcal J_{X_k}([\omega])\to 2\pi\).
\end{remark}

Combining the previous lemmas, we obtain the following theorem.
\begin{theorem}\label{thm:hirzebruch}
Let $X\to\PP^1$ be a rational ruled surface (not necessarily minimal) endowed with a PSC Kähler metric $\omega$. Then
\[
\syst_2(\omega)\cdot\min_X S(\omega)\le 8\pi.
\]
Moreover, equality holds if and only if $X\cong \PP^1 \times \PP^1$, endowed with the product Fubini-Study metric.
\end{theorem}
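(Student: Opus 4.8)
The plan is to reduce the Riemannian systolic inequality to the algebraic supremum bound $\sup_{\mathcal{K}^+(X)}\mathcal{J}_X\le 8\pi$, which is essentially assembled from the preceding results, and then to isolate the equality case. First I would apply Theorem~\ref{thm:finite-J}(1), which yields
\[
  \min_X S(\omega)\cdot\syst_2(\omega)\;\le\;\mathcal{J}_X([\omega])\;\le\;\sup_{\mathcal{K}^+(X)}\mathcal{J}_X ,
\]
so the whole inequality follows once this last supremum is shown to be at most $8\pi$ on every rational ruled surface.

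For the supremum bound I would argue by the birational type of $X$. A surface $X\to\PP^1$ with rational ruling is either a relatively minimal Hirzebruch surface $\FF_e$ ($e\ge 0$) or a blow-up $\blk\FF_e$ with $k\ge 1$. In the minimal case Proposition~\ref{prop:ruled-J0-bound} gives $\sup_{\mathcal{K}(\FF_e)}\mathcal{J}_{\FF_e}=4\pi\frac{e+4}{e+2}$; since $\frac{e+4}{e+2}$ is strictly decreasing in $e$ with value $2$ at $e=0$ and limit $1$, this equals $8\pi$ exactly when $e=0$ and is strictly below $8\pi$ for every $e\ge 1$. In the blow-up case I would combine Proposition~\ref{prop:Jk-g0}, Lemma~\ref{lem:Phi-le-1} and Lemma~\ref{lem:Jk-upper-Fe} to get $\sup\mathcal{J}_{\blk\FF_e}\le 4\pi<8\pi$ for all $e\ge 0$, $k\ge 1$ (configurations in special position only lower $\mathcal{J}$, as noted before Lemma~\ref{lem:Jk-upper-Fe}, so very general position is the worst case). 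Hence $\sup_{\mathcal{K}^+(X)}\mathcal{J}_X\le 8\pi$ for every rational ruled $X$, with equality if and only if $X\cong\FF_0\cong\PP^1\times\PP^1$; moreover, by the attainment clause of Proposition~\ref{prop:ruled-J0-bound} in the case $e=0$, the value $8\pi$ is realized precisely on the balanced ray $[\omega]\propto F_1+F_2$.

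For rigidity I would analyze when both inequalities in the display are equalities. Equality in the second forces $\mathcal{J}_X([\omega])=8\pi$, hence $X\cong\PP^1\times\PP^1$ carrying the balanced Kähler class. Equality in the first is exactly the equality clause of Theorem~\ref{thm:finite-J}(1), so $\omega$ is cscK and $\syst_2(\omega)$ is attained by a holomorphic $1$-cycle. Since cscK metrics in a fixed Kähler class are unique modulo $\mathrm{Aut}_0$, and the product Fubini--Study metric is a cscK representative of the balanced class (which is proportional to $-K_{\PP^1\times\PP^1}$), $\omega$ must be the product Fubini--Study metric up to scaling and isometry, giving the stated rigid model.

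The one genuinely metric rather than cohomological point, and the step I expect to be the main obstacle, is the \emph{converse}: checking that the product Fubini--Study metric $\omega_0$ indeed attains $8\pi$, i.e.\ that $\syst_2(\omega_0)=\sysh_2([\omega_0])$ so that the equality clause of Theorem~\ref{thm:finite-J}(1) genuinely applies. The calibration by $\omega_0$ only gives $\syst_2\le\sysh_2$, and a priori a mixed class $pF_1+qF_2$ might carry a cheaper non-holomorphic representative with $[\omega_0]\cdot(pF_1+qF_2)$ small. To exclude this I would use that on a product of two equal round spheres both pulled-back factor area forms $\pi_1^*\mathrm{dA}_1$ and $\pi_2^*\mathrm{dA}_2$ are calibrations of comass one; integrating them over a cycle $Z$ in class $pF_1+qF_2$ shows that $\mathrm{Area}(Z)$ is at least $\max(|p|,|q|)$ times the area of a single factor, hence at least the area of one fibre for every nonzero class, with equality on a $\PP^1$ factor. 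This identifies $\syst_2(\omega_0)$ with the fibre area and closes the equality case.
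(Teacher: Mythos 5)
Your proposal is correct and follows essentially the same route as the paper: the paper's proof of Theorem~\ref{thm:hirzebruch} is precisely the combination of Proposition~\ref{prop:ruled-J0-bound} (giving $4\pi\tfrac{e+4}{e+2}$, maximal and equal to $8\pi$ only at $e=0$), Proposition~\ref{prop:Jk-g0}, Lemma~\ref{lem:Phi-le-1} and Lemma~\ref{lem:Jk-upper-Fe} (giving $\le 4\pi$ for all blow-ups), fed into Theorem~\ref{thm:finite-J}(1). Your treatment of the equality case is in fact more complete than the paper's terse statement: the cscK-uniqueness argument identifying $\omega$ with the product Fubini--Study metric, and especially the comass-one calibration argument showing $\syst_2(\omega_0)=\sysh_2([\omega_0])$ for the product metric (so that equality is genuinely attained), are details the paper leaves implicit, and both are sound.
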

\begin{proof}
Let \(X\to \PP^1\) be a rational ruled surface endowed with a PSC K\"ahler metric \(\omega\). Since every rational ruled surface is obtained from a Hirzebruch surface \(\FF_e\) by blowing up finitely many points, we distinguish cases according to the number of blow-ups.

If \(X\) is minimal, then \(X\cong \FF_e\) for some \(e\ge0\). By Proposition~\ref{prop:ruled-J0-bound},
\[
\sup_{\mathcal K(\FF_e)}\mathcal J_{\FF_e}([\omega])
=
4\pi\,\frac{e+4}{e+2}
\le 8\pi,
\]
with equality if and only if \(e=0\), i.e. \(X\cong \FF_0\cong \PP^1\times\PP^1\).

If \(X\) is the blow-up of \(\FF_e\) at one point, then Proposition~\ref{prop:Jk-g0} gives
\[
\sup_{\mathcal K^+(X)}\mathcal J_X([\omega])
=
\begin{cases}
4\pi, & e=0,\\[4pt]
4\pi\,\dfrac{2e+5}{4e+3}, & e\ge1,
\end{cases}
\]
and in either case this is strictly smaller than \(8\pi\).

If \(X\) is the blow-up of \(\FF_e\) at \(k\ge2\) points, then Lemma~\ref{lem:Phi-le-1} and \ref{lem:Jk-upper-Fe} gives
\[
\sup_{\mathcal K^+(X)}\mathcal J_X([\omega])\le 4\pi.
\]

Combining all cases, we obtain
\[
\sup_{\mathcal K^+(X)}\mathcal J_X([\omega])\le 8\pi,
\]
and equality can occur only when \(X\cong \FF_0\cong \PP^1\times\PP^1\). Now apply Theorem~\ref{thm:finite-J} (1). Since \(\omega\) is PSC, we have \([\omega]\in\mathcal K^+(X)\), and therefore
\[
\syst_2(\omega)\cdot \min_X S(\omega)
\le
\mathcal J_X([\omega])
\le
8\pi.
\]
This proves the inequality.

It remains to characterize the equality case. Suppose \(\syst_2(\omega)\cdot \min_X S(\omega)=8\pi\). Then necessarily \(\mathcal J_X([\omega])=8\pi\), so by the discussion above we must have \(X\cong \PP^1\times\PP^1\). Moreover, equality in Theorem~\ref{thm:finite-J} (1) implies that \(\omega\) is cscK and that \(\syst_2(\omega)\) is realized by a holomorphic curve. On \(\PP^1\times\PP^1\), write \([\omega]=aF_1+bF_2\), for $a,b>0$ where \(F_1,F_2\) are the two rulings. Assuming without loss of generality that \(a\le b\), then
\[
\mathcal J_{\PP^1\times\PP^1}([\omega])
=
4\pi\,a\,\frac{a+b}{ab}
=
4\pi\left(1+\frac ab\right)
\le 8\pi,
\]
with equality if and only if \(a=b\). Thus \([\omega]\) is proportional to \(F_1+F_2\). Since \(\omega\) is cscK, it follows that, up to overall scaling, \(\omega\) is the product Fubini--Study metric on \(\PP^1\times\PP^1\).

Conversely, the product Fubini-Study metric on \(\PP^1\times\PP^1\) has constant scalar curvature, and its \(2\)-systole is realized by either ruling \(\PP^1\). A direct computation shows that equality holds:
\[
\syst_2(\omega)\cdot \min_X S(\omega)=8\pi.
\]
This completes the proof.
\end{proof}


\section{Level set method on non-rational PSC K\"ahler surfaces}\label{sec:level-set}

In \cite{stern2022scalar}, Stern introduced the following inequality for a non-constant $S^1$-valued harmonic map $u$ on a $3$-manifold $(M,g)$ through the level set method:
\begin{equation}\label{Stern-ineq}
2 \pi \int_{\theta \in S^1} \chi\left(\Sigma_\theta\right)d\theta
\;\geq\; \frac{1}{2} \int_{\theta \in S^1} \left[\int_{\Sigma_\theta}
\left(|d u|^{-2}|\operatorname{Hess}(u)|^2+\scal_M(g)\right)dV_g \right]d\theta,
\end{equation}
and used it to give a new proof of the Bray--Brendle--Neves systolic inequality for the $2$-systole. In this section, we adapt the level set method to non-rational PSC Kähler surfaces, and obtain an alternative proof of Theorem~\ref{thm:ruled-g>=1}.

\subsection{A Stern-type scalar curvature inequality}
Let $X$ be a compact non-rational PSC Kähler surface. By the classification result recalled in the introduction, $X$ is a ruled surface fibred over a compact Riemann surface $B$ of genus $g(B)\ge1$. Denote by
\[
f\colon (X,\omega)\longrightarrow B
\]
the induced non-constant holomorphic fibration. By the uniformization theorem, we may equip $B$ with a constant curvature metric $\omega_0$ of non-positive Gaussian curvature (so $\omega_0$ is hyperbolic if $g(B)\ge2$ and flat if
$g(B)=1$).

Fix a point $z\in B$. Then $f^{-1}(z)$ is a (possibly singular) Cartier divisor on $X$, which we denote by $D_z$. It defines a line bundle $\mathcal{O}(D_z)$ whose first Chern class is represented by $f^*\omega_0$ (after a suitable normalization). In what follows we restrict attention to regular values of $f$, so that $D_z$ is smooth, and we keep the notation $D_z$ for the smooth fibre. Recall that for a smooth divisor $D\subset X$, the adjunction formula states
\[
K_D = \bigl(K_X\otimes\mathcal{O}(D)\bigr)|_D.
\]
Since \(D=D_z\) is a fibre of the holomorphic fibration \(f\colon X\to B\), its
normal bundle \(\mathcal N_D\cong \mathcal O(D)|_D\) is holomorphically
trivial. By the adjunction formula,
\[
K_D=\bigl(K_X\otimes \mathcal O(D)\bigr)|_D.
\]
Taking first Chern classes, we obtain
\[
c_1(D)=c_1(X)|_D.
\]
Therefore, for the Ricci forms induced by \(\omega\), the difference
\[
\ric_D(\omega)-\ric_X(\omega)|_D
\]
is an exact real \((1,1)\)-form on the compact Riemann surface \(D\). By the \(\partial\bar\partial\)-lemma, there exists a smooth real-valued
function \(\phi\in C^\infty(D)\) such that
\[
\ric_D(\omega)=\ric_X(\omega)|_D+\sqrt{-1}\,\partial_D\bar\partial_D\phi.
\]
Tracing with respect to the induced metric \(\omega|_D\), we get
\[
S_D(\omega)
=
\operatorname{tr}_{\omega|_D}\bigl(\ric_X(\omega)|_D\bigr)
+\Delta_D\phi.
\]
Let \(\nu\) be a local unit normal vector field of type \((1,0)\) along \(D\)
with respect to \(\omega\). Since \(D\subset X\) is a complex hypersurface, we
have
\[
\operatorname{tr}_{\omega|_D}\bigl(\ric_X(\omega)|_D\bigr)
=
S_X(\omega)-\ric_X(\omega)(\nu,\bar\nu).
\]
Hence
\[
S_D(\omega)
=
S_X(\omega)-\ric_X(\omega)(\nu,\bar\nu)+\Delta_D\phi.
\]
In particular, since
\[
\nu=\frac{\nabla^{1,0}f}{|\nabla^{1,0}f|},
\]
we obtain
\begin{equation}\label{traced_Gauss}
\ric_X(\omega)\bigl(\nabla^{1,0}f,\nabla^{0,1}f\bigr)
=
|\nabla^{1,0}f|^2\bigl(S_X(\omega)-S_D(\omega)+\Delta_D\phi\bigr).
\end{equation}

We next recall the Bochner formula for holomorphic maps and a co-area formula
adapted to the present setting.

\begin{lemma}[Bochner formula]
Let \(f \colon (X,\omega) \rightarrow (N,\tilde{\omega})\) be a holomorphic map
between Kähler manifolds. Then
\[
\Delta |\partial f|^2
= |\nabla \partial f|^2
  + \big\langle \ric(\omega), f^* \tilde{\omega} \big\rangle
  - \operatorname{tr}^2_{\omega} \bigl(f^*\operatorname{Rm}(\tilde{\omega})\bigr),
\]
where \(\ric(\omega)\) is the Ricci form of \(X\) and \(\operatorname{Rm}(\tilde{\omega})\)
is the curvature form of \(N\).
\end{lemma}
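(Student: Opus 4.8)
The plan is to establish the identity by a direct computation in local holomorphic coordinates, exploiting the fact that holomorphicity forces all mixed second derivatives of $f$ to vanish, so that the Laplacian of the energy density splits cleanly into a squared-Hessian term and a pair of curvature terms produced by commuting covariant derivatives.

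First I would fix a point $x\in X$ and choose holomorphic normal coordinates $(z^\alpha)$ centered at $x$ on $X$ and $(w^i)$ centered at $f(x)$ on $N$, so that in the Kähler setting the metric coefficients $g_{\alpha\bar\beta}$, $h_{i\bar j}$ and their first derivatives vanish at the base point. Writing $f^i_\alpha:=\partial f^i/\partial z^\alpha$, the holomorphic energy density becomes $|\partial f|^2=g^{\alpha\bar\beta}h_{i\bar j}f^i_\alpha\overline{f^j_\beta}$. The crucial structural input is holomorphicity: in the Kähler category this forces $\nabla_{\bar\beta}f^i_\alpha=0$, i.e.\ the antiholomorphic part of the Hessian $\nabla\partial f$ vanishes identically, so that $|\nabla\partial f|^2=g^{\alpha\bar\beta}g^{\gamma\bar\delta}h_{i\bar j}(\nabla_\gamma f^i_\alpha)\overline{(\nabla_\delta f^j_\beta)}$ involves only the holomorphic Hessian.

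Next I would apply the complex Laplacian $\Delta=g^{\alpha\bar\beta}\nabla_\alpha\nabla_{\bar\beta}$ to $|\partial f|^2$ and expand by the Leibniz rule. Because both Kähler metrics are parallel, $g$ and $h$ pass through the covariant derivatives; because $\nabla_{\bar\beta}f^i_\alpha=0$, the only surviving first-order term is $(\nabla_\alpha f^i_\gamma)\overline{(\nabla_\beta f^j_\delta)}$, whose contraction yields exactly $|\nabla\partial f|^2$. The remaining contribution is a cross term in which $\nabla_\alpha\nabla_{\bar\beta}$ acts on the conjugate factor $\overline{f^j_\delta}$; since $\nabla_\alpha\overline{f^j_\delta}=0$ (the holomorphic derivative annihilates the antiholomorphic differential), one has $\nabla_\alpha\nabla_{\bar\beta}\overline{f^j_\delta}=[\nabla_\alpha,\nabla_{\bar\beta}]\overline{f^j_\delta}$, a pure commutator.

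The heart of the argument, and the step I expect to be the main bookkeeping obstacle, is evaluating this commutator via the Ricci identity. The object $\overline{f^j_\delta}$ carries an antiholomorphic cotangent index $\bar\delta$ on the domain $X$ and a pulled-back antiholomorphic tangent index $\bar j$ on the target $N$, so $[\nabla_\alpha,\nabla_{\bar\beta}]$ produces two curvature contributions: the Riemann tensor of $X$ acting on the $\bar\delta$ index, and the $f$-pulled-back curvature of $N$ acting on the $\bar j$ index. Tracing against $g^{\alpha\bar\beta}g^{\gamma\bar\delta}h_{i\bar j}f^i_\gamma$, the double contraction collapses the domain Riemann tensor to its Ricci trace, giving $\langle\ric(\omega),f^*\tilde\omega\rangle$, while the target term becomes $-\operatorname{tr}^2_\omega(f^*\operatorname{Rm}(\tilde\omega))$; the opposite sign of the target contribution is forced by the Ricci identity together with the curvature sign convention on $N$. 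Assembling the squared-Hessian term with these two curvature traces yields the stated formula. The delicate points are keeping the two traces (over domain and target indices) separate and confirming that the domain contribution reduces to the Ricci \emph{form} rather than the full curvature tensor, a simplification special to the Kähler holomorphic setting that would fail for a general harmonic map.
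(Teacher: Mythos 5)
Your derivation is correct, but note that the paper itself offers no proof of this lemma at all: it is stated as a known result (it is the classical Bochner/Chern--Lu identity for holomorphic maps between K\"ahler manifolds), and the paper proceeds directly to use it. Your computation is the standard derivation of that identity, and all the key steps are sound: holomorphicity plus the K\"ahler condition indeed give $\nabla_{\bar\beta}f^i_\alpha=0$ (both because the mixed Christoffel symbols of a K\"ahler metric vanish and because $\partial_{\bar\beta}f^i=0$), so the Leibniz expansion of $g^{\alpha\bar\beta}\nabla_\alpha\nabla_{\bar\beta}|\partial f|^2$ produces exactly one squared-Hessian term, and the remaining cross term is the pure commutator $[\nabla_\alpha,\nabla_{\bar\beta}]$ acting on $\overline{f^j_\delta}$, whose Ricci-identity evaluation splits into the domain curvature (collapsing under the $g^{\alpha\bar\beta}$ trace to $\langle\ric(\omega),f^*\tilde\omega\rangle$, since $(f^*\tilde\omega)_{\gamma\bar\delta}=h_{i\bar j}f^i_\gamma\overline{f^j_\delta}$) and the pulled-back target curvature (yielding the double trace $-\operatorname{tr}^2_\omega(f^*\operatorname{Rm}(\tilde\omega))$). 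The only place where your sketch stops short of a complete proof is the commutator step: to certify the two signs you would need to write out the Ricci identity explicitly with a fixed curvature convention, e.g.\ $[\nabla_\alpha,\nabla_{\bar\beta}]V^{\bar\mu}=-R_{\alpha\bar\beta}{}^{\bar\mu}{}_{\bar\nu}V^{\bar\nu}$ on anti-holomorphic indices, and check that the domain term comes out positive while the target term comes out negative; as written you assert this is ``forced by the convention'' without exhibiting it. That is a matter of bookkeeping rather than a gap in the idea, and filling it in gives a complete proof of the lemma the paper leaves to the reader.
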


\begin{lemma}[Co-area formula]
Let \((X^n,\omega)\) be a compact Kähler manifold and let \((B,\omega_0)\) be a
compact Riemann surface, normalized so that
\[
\int_B \omega_0 = \frac{1}{n}.
\]
Let \(f\colon X\to B\) be a non-constant holomorphic map, and let \(g\in C^\infty(X)\).
Then
\[
\int_X g\,\omega^n
= \int_B \left(\int_{f^{-1}(z)}\frac{g}{|\partial f|^2}\,\omega^{n-1}\right)\omega_0.
\]
\end{lemma}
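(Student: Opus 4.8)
The plan is to bypass the general coarea theorem of geometric measure theory and instead reduce the statement to a single pointwise K\"ahler identity followed by fibre integration. The engine is the elementary linear-algebra fact that for any real $(1,1)$-form $\alpha$ on an $n$-dimensional K\"ahler manifold one has $\alpha\wedge\omega^{n-1}=\tfrac1n(\operatorname{tr}_\omega\alpha)\,\omega^n$. Applying this to $\alpha=f^*\omega_0$ and recalling that the energy density of the holomorphic map is $|\partial f|^2=\operatorname{tr}_\omega f^*\omega_0$, I obtain the pointwise identity $f^*\omega_0\wedge\omega^{n-1}=\tfrac1n|\partial f|^2\,\omega^n$ valid on all of $X$.

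First I would fix the regular locus $U:=\{x\in X:\partial f(x)\neq0\}$, which is open and dense and whose complement (the critical set of $f$) is a proper analytic subset, hence of measure zero. On $U$ the identity above can be divided by $|\partial f|^2$, giving $g\,\omega^n=n\,\dfrac{g}{|\partial f|^2}\,f^*\omega_0\wedge\omega^{n-1}$. The crucial observation is that the apparent singularity $1/|\partial f|^2$ cancels against the vanishing of $f^*\omega_0$, so the left-hand side extends smoothly across the critical set and the equality of the two $2n$-forms holds globally on $X$; integrating over $X$ therefore presents no difficulty on the source side. Next I would carry out the fibre integration: for the proper holomorphic map $f\colon X\to B$ the projection (Fubini) formula gives, for any $(2n-2)$-form $\beta$, the identity $\int_X f^*\omega_0\wedge\beta=\int_B\big(\int_{f^{-1}(z)}\beta\big)\,\omega_0$, where the inner integral is the fibre pushforward $f_*\beta$ at $z$. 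Taking $\beta=\dfrac{g}{|\partial f|^2}\,\omega^{n-1}$ and using that each smooth fibre $D_z=f^{-1}(z)$ is a complex $(n-1)$-dimensional submanifold calibrated by $\omega$, so that $\omega^{n-1}|_{D_z}$ is its Riemannian volume, yields the desired formula up to a dimensional constant. Collecting the numerical constants then finishes the argument, the normalization $\int_B\omega_0=\tfrac1n$ being chosen precisely so that the resulting prefactor equals one.

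The main obstacle is measure-theoretic rather than algebraic. Although the source integral is harmless, the inner fibre integral $\int_{f^{-1}(z)}\tfrac{g}{|\partial f|^2}\,\omega^{n-1}$ is only meaningful for \emph{regular} values $z$, where $D_z$ is smooth and $\partial f$ is nonvanishing along it. I would invoke Sard's theorem to see that the critical values form a set of measure zero in $B$, restrict the outer integration to the open dense set of regular values, and justify the projection formula there by exhausting $B$ with compact subsets of regular values together with dominated convergence; the finitely many singular fibres contribute nothing to the outer integral. A careful treatment must also confirm that $g/|\partial f|^2$ is integrable along each regular fibre, which is immediate since $\partial f\neq0$ on $D_z$ for regular $z$, and that no mass is lost near the critical set, for which the cancellation of $1/|\partial f|^2$ noted above is exactly what is needed.

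As an alternative route, one may cite Federer's coarea formula directly, after verifying that the coarea Jacobian $J_2 f$ of the holomorphic map equals $|\partial f|^2$: this holds because $df$ restricts to a conformal (complex-linear) isomorphism on the complex line orthogonal to the fibre, so its normal Jacobian is the square of the conformal factor. One then only needs to convert the Riemannian volume forms on $X$, on the fibres, and on $B$ into the corresponding powers of $\omega$ and into $\omega_0$. I nonetheless prefer the K\"ahler trace-identity argument above, as it is self-contained, avoids invoking the full machinery of rectifiable currents, and makes the origin of the normalization $\int_B\omega_0=\tfrac1n$ transparent.
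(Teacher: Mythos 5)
Your overall strategy is fine, and in fact the paper offers no proof of this lemma at all (it is recorded as a standard fact), so your write-up must stand on its own. The correct parts: the trace identity $\alpha\wedge\omega^{n-1}=\tfrac1n(\operatorname{tr}_\omega\alpha)\,\omega^n$, the identification $|\partial f|^2=\operatorname{tr}_\omega f^*\omega_0$ for a holomorphic map to a curve, hence the pointwise identity $f^*\omega_0\wedge\omega^{n-1}=\tfrac1n|\partial f|^2\,\omega^n$; the fibre-integration (projection) formula; and the handling of the critical locus via Sard, exhaustion and dominated convergence. Carried through honestly, these steps prove
\[
\int_X g\,\omega^n \;=\; n\int_B\left(\int_{f^{-1}(z)}\frac{g}{|\partial f|^2}\,\omega^{n-1}\right)\omega_0 .
\]
The gap is your final sentence: the claim that the normalization $\int_B\omega_0=\tfrac1n$ makes ``the resulting prefactor equal to one'' is impossible. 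The double integral on the right is invariant under rescaling $\omega_0\mapsto\lambda\omega_0$, because $|\partial f|^2=\operatorname{tr}_\omega f^*\omega_0$ scales linearly in $\omega_0$ and this cancels against the outer measure $\omega_0$. Hence no normalization of $\omega_0$ can absorb the factor $n$, and what you have actually proved is a different identity from the one stated.

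The discrepancy is not an artefact of your method: the lemma as printed is off by the factor $n$. For instance, take $X=B\times F$ with $F$ compact K\"ahler of dimension $n-1$, $\omega=p_B^*\omega_B+p_F^*\omega_F$ with $\int_B\omega_B=\tfrac1n$, $f=p_B$ and $\omega_0=\omega_B$; then $|\partial f|^2\equiv1$, all values of $f$ are regular, the left-hand side equals $n\bigl(\int_B\omega_B\bigr)\bigl(\int_F\omega_F^{\,n-1}\bigr)$, while the right-hand side equals $\bigl(\int_B\omega_B\bigr)\bigl(\int_F\omega_F^{\,n-1}\bigr)$. Note also that the co-area formula actually invoked in the proof of Lemma~\ref{lem:stern-type} carries the explicit factor $n$, i.e.\ it is precisely the identity your computation yields, not the one displayed in the lemma. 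The honest repair of your proof is therefore to delete the constant-collection step, state the conclusion with the factor $n$ (the normalization hypothesis then plays no role), and observe that this is the version used later in the paper. As written, your last step asserts something false, so the proof of the literal statement does not close.
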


When \((N,\tilde{\omega})=(B,\omega_0)\), the curvature term
\(\operatorname{tr}^2_{\omega}(f^*\operatorname{Rm}(\omega_0))\) is non-negative
and vanishes identically if and only if \((B,\omega_0)\) is flat. In particular,
\[
\Delta |\partial f|^2
\;\ge\; |\nabla \partial f|^2
        + \ric_X(\omega)\bigl(\nabla^{1,0}f,\nabla^{0,1}f\bigr),
\]
with equality if and only if \((B,\omega_0)\) is an elliptic curve with a flat
metric. Combining this with \eqref{traced_Gauss} we obtain
\begin{equation}\label{important}
\Delta |\partial f|^2
\;\ge\; |\nabla \partial f|^2
        + |\nabla^{1,0}f|^2\bigl(S_X(\omega)-S_D(\omega)+\Delta_D\phi\bigr).
\end{equation}

We now integrate this inequality in the fibre direction using the co-area
formula. The following lemma is the basic Stern-type inequality that we shall
use in the non-rational ruled case.

\begin{lemma}\label{lem:stern-type}
Let \((X^{n},\omega)\) be a compact Kähler manifold and let \((B,\omega_0)\) be a
compact Riemann surface of genus \(g(B)\ge1\), endowed with a constant curvature
metric \(\omega_0\). Suppose that \(f\colon X \rightarrow B\) is a non-constant
holomorphic map, and let \(D=f^{-1}(z)\) denote a regular fibre. Then
\begin{align}\label{eq:levelset_ineq}
\int_{B} \left[\int_{D} \left(
\frac{\left|\nabla \partial f\right|^2}{|\partial f|^2}
+  S_X(\omega) -  S_{D}(\omega) \right)\omega^{n-1} \right]\omega_0 
\;\le\; 0.
\end{align}
Moreover, equality holds in \eqref{eq:levelset_ineq} if and only if \(g(B)=1\) and
\((B,\omega_0)\) is an elliptic curve with a flat metric.
\end{lemma}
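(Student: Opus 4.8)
The plan is to integrate the pointwise Bochner estimate \eqref{important} against the nonnegative weight $\phi^2$, and then dispose of the two resulting bulk integrals with the tools already in hand: the co-area formula on the right and a K\"ahler integration by parts on the left. Concretely, I would first multiply \eqref{important} by $\phi^2\ge0$ and integrate over $X$ against $\omega^n$. Since $\phi^2\ge0$, this preserves the direction of the inequality and yields
\[
\int_X \phi^2\,\Delta|\partial f|^2\,\omega^n
\;\ge\;
\int_X \phi^2\,|\nabla\partial f|^2\,\omega^n
+\int_X \phi^2\,|\nabla^{1,0}f|^2\bigl(S_X(\omega)-S_D(\omega)\bigr)\,\omega^n .
\]

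The right-hand side is where the co-area formula enters, and this is the step that needs the most care. The apparent difficulty is that $S_D(\omega)$ is a priori only the scalar curvature of a \emph{fibre}, hence defined where $\partial f\neq0$; but by \eqref{traced_Gauss} the product $|\nabla^{1,0}f|^2\bigl(S_X(\omega)-S_D(\omega)\bigr)$ equals $\ric_X(\omega)(\nabla^{1,0}f,\nabla^{0,1}f)$, which is a genuinely smooth function on all of $X$ (the singularity of $S_D$ along the critical locus is cancelled by the vanishing of $|\nabla^{1,0}f|^2$ there). Thus both integrands $\phi^2|\nabla\partial f|^2$ and $\phi^2|\nabla^{1,0}f|^2(S_X-S_D)$ are smooth, and I would apply the co-area formula to each. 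Dividing by $|\partial f|^2=|\nabla^{1,0}f|^2$ inside the fibre integral, and using that almost every $z\in B$ is a regular value so that $D=f^{-1}(z)$ is smooth and $S_D$ is the honest scalar curvature of the fibre, the right-hand side becomes exactly
\[
\int_B\!\left[\int_D \phi^2\!\left(\frac{|\nabla\partial f|^2}{|\partial f|^2}+S_X(\omega)-S_D(\omega)\right)\omega^{n-1}\right]\omega_0,
\]
which is precisely the left-hand side of \eqref{eq:levelset_ineq}.

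It then remains to rewrite the bulk term $\int_X\phi^2\,\Delta|\partial f|^2\,\omega^n$. Working in the normalization $(\Delta u)\,\omega^n=n\,\sqrt{-1}\,\partial\bar\partial u\wedge\omega^{n-1}$ under which the Bochner formula is stated (this is exactly what produces the factor $n$ in the target), I would integrate by parts via Stokes' theorem applied to the $(n-1,n)$-form $\phi^2\,\sqrt{-1}\,\bar\partial|\partial f|^2\wedge\omega^{n-1}$. Its exterior derivative is purely of type $(n,n)$ since $\partial\omega=0$ forces $\partial\omega^{n-1}=0$, so that $d\bigl(\phi^2\sqrt{-1}\,\bar\partial|\partial f|^2\wedge\omega^{n-1}\bigr)=\sqrt{-1}\,\partial(\phi^2)\wedge\bar\partial|\partial f|^2\wedge\omega^{n-1}+\phi^2\sqrt{-1}\,\partial\bar\partial|\partial f|^2\wedge\omega^{n-1}$, and integrating over the closed manifold $X$ gives the exact identity
\[
\int_X \phi^2\,\Delta|\partial f|^2\,\omega^n
=-\,n\int_X \sqrt{-1}\,\partial(\phi^2)\wedge\bar\partial|\partial f|^2\wedge\omega^{n-1}.
\]
Substituting the previous paragraph's evaluation of the right-hand side and this identity for the left-hand side into the integrated inequality yields \eqref{eq:levelset_ineq}.

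For the equality case, the crucial point is that the integration by parts is an \emph{exact identity}, so the unique source of inequality in the whole chain is the pointwise Bochner estimate \eqref{important}. As recorded there, equality in \eqref{important} holds pointwise if and only if the curvature term $\operatorname{tr}^2_\omega\!\bigl(f^*\operatorname{Rm}(\omega_0)\bigr)$ vanishes, i.e.\ exactly when $(B,\omega_0)$ is flat, which for $g(B)\ge1$ means $g(B)=1$. When $g(B)=1$ the pointwise identity integrates to equality in \eqref{eq:levelset_ineq}; when $g(B)\ge2$ the curvature term is strictly positive on the dense open set $\{\partial f\neq0\}$, so the integrated inequality is strict for any $\phi\not\equiv0$. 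The main obstacle I expect is bookkeeping rather than conceptual: pinning down the Laplacian/volume normalization so that the integration by parts lands exactly on $-\,n\int_X\sqrt{-1}\,\partial(\phi^2)\wedge\bar\partial|\partial f|^2\wedge\omega^{n-1}$ with the correct factor and sign, and consistently identifying $|\partial f|^2$ in the Bochner formula with the $|\nabla^{1,0}f|^2$ appearing in \eqref{traced_Gauss} and in the co-area denominators. Handling the $S_D$ term through \eqref{traced_Gauss} rather than literally is what makes the co-area step legitimate across the critical locus.
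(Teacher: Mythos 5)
Your proof is correct and follows essentially the same route as the paper: multiply the Bochner estimate \eqref{important} by $\phi^2$, convert the curvature/Hessian terms into fibre integrals via the co-area formula, rewrite the Laplacian term by K\"ahler integration by parts to get the exact identity $\int_X\phi^2\,\Delta|\partial f|^2\,\omega^n=-n\int_X\sqrt{-1}\,\partial(\phi^2)\wedge\bar\partial|\partial f|^2\wedge\omega^{n-1}$, and trace equality back to flatness of $(B,\omega_0)$. The only difference is technical rather than conceptual: the paper excises a small neighbourhood of the critical values and passes to the limit via Sard's theorem, whereas you work directly on all of $X$ by observing that \eqref{traced_Gauss} exhibits $|\nabla^{1,0}f|^2\bigl(S_X(\omega)-S_D(\omega)\bigr)$ as the globally smooth function $\ric_X(\omega)\bigl(\nabla^{1,0}f,\nabla^{0,1}f\bigr)$, which is an equally valid (arguably cleaner) way to handle the critical locus.
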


\begin{proof}
Write \(B=A \cup B_0\), where \(A\) is an open neighbourhood of the finite set
of critical values of \(f\), and \(B_0\) consists only of regular values.
Integrating \eqref{important} over \(f^{-1}(B_0)\), we obtain
\begin{align}\label{eq:stern-ineq-local}
\int_{f^{-1}(B_0)}\Bigl(
\bigl|\nabla \partial f\bigr|^2
+ \bigl|\nabla^{1,0}f\bigr|^2\bigl(S_X(\omega) - S_{D}(\omega)+\Delta_D\phi\bigr)\Bigr)\omega^n
\;\le\; \int_{f^{-1}(B_0)} \bigl(\Delta |\partial f|^2\bigr) \omega^n.
\end{align}
Since \(X\) is compact and has no boundary,
\[
\int_X \bigl(\Delta |\partial f|^2\bigr)\omega^n=0.
\]
Hence, by choosing \(A\) with arbitrarily small measure and using Sard's theorem,
we may pass to the limit in \eqref{eq:stern-ineq-local} and obtain
\[
\int_X\Bigl(
\bigl|\nabla \partial f\bigr|^2
+ \bigl|\nabla^{1,0}f\bigr|^2\bigl(S_X(\omega) - S_{D}(\omega)+\Delta_D\phi\bigr)\Bigr)\omega^n
\;\le\; 0.
\]
Now apply the co-area formula term by term. Since \(|\nabla^{1,0}f|^2=|\partial f|^2\), we get
\begin{align*}
\int_X |\nabla\partial f|^2\,\omega^n
&=
n\int_B\left(\int_D \frac{|\nabla\partial f|^2}{|\partial f|^2}\,\omega^{n-1}\right)\omega_0,\\
\int_X |\nabla^{1,0}f|^2\bigl(S_X(\omega)-S_D(\omega)\bigr)\,\omega^n
&=
n\int_B\left(\int_D \bigl(S_X(\omega)-S_D(\omega)\bigr)\,\omega^{n-1}\right)\omega_0,
\end{align*}
and similarly
\[
\int_X |\nabla^{1,0}f|^2\,\Delta_D\phi\,\omega^n
=
n\int_B\left(\int_D \Delta_D\phi\,\omega^{n-1}\right)\omega_0.
\]
Since each regular fibre \(D\) is compact without boundary,
\[
\int_D \Delta_D\phi\,\omega^{n-1}=0.
\]
Therefore
\begin{align*}
0
&\ge
n\int_B\left[\int_D\left(
\frac{|\nabla\partial f|^2}{|\partial f|^2}
+S_X(\omega)-S_D(\omega)\right)\omega^{n-1}\right]\omega_0,
\end{align*}
which gives \eqref{eq:levelset_ineq}. The equality statement follows from the discussion before \eqref{important}:
equality in the Bochner inequality holds if and only if
\(\operatorname{Rm}(\omega_0)\equiv0\), namely if and only if \(g(B)=1\) and
\((B,\omega_0)\) is flat.
\end{proof}

\subsection{The 2-systole on non-rational PSC K\"ahler surfaces}

In this subsection, we study the homological $2$-systole on non-rational PSC K\"ahler surfaces. Recall that, by the classification of PSC K\"ahler surfaces, a non-rational PSC K\"ahler surface is precisely a (possibly blown-up) ruled surface fibred over a curve of genus $g\ge1$.

By leveraging \eqref{eq:levelset_ineq}, we provide an alternative proof of Theorem \ref{thm:ruled-g>=1} with an analytic method.
It is worth noting that for a K\"ahler metric the Chern scalar curvature $S(\omega)$ differs from the Riemannian scalar curvature $\scal(g_\omega)$ by a factor $2$.
\begin{theorem}\label{thm:ruled-g>=1-levelset}
Let $(X,\omega)$ be a non-rational PSC K\"ahler surface admitting a holomorphic fibration $ X\to B$ to a compact Riemann surface $B$ with genus $g(B)\ge1$. Then
\begin{equation}\label{main_ineq}
\min_X S_X(\omega)\cdot\syst_2(\omega) \;\leq\; 4\pi.
\end{equation}
Moreover, equality holds if and only if $g(B)=1$, $X$ is covered by $\PP^1\times \CC$ equipped with the product of the Fubini--Study metric on $\PP^1$ and a flat metric on $\CC$, in such a way that $\syst_2(\omega)$ is achieved by the $\PP^1$-fibre.
\end{theorem}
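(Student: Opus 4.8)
The plan is to test the Stern-type inequality of Lemma~\ref{lem:stern-type} against the constant function $\phi\equiv1$. Since $\partial(\phi^2)=0$, the entire right-hand side vanishes, and with $n=2$ (so $\omega^{n-1}=\omega$) we are left with the clean estimate
\[
\int_B\left[\int_D\left(\frac{|\nabla\partial f|^2}{|\partial f|^2}+S_X(\omega)-S_D(\omega)\right)\omega\right]\omega_0\;\le\;0,
\]
where $D=f^{-1}(z)$ is a regular fibre. The first input is a topological computation of the fibre term: restricting to regular values (Sard's theorem), $D\cong\PP^1$, so the adjunction identity $c_1(D)=c_1(X)|_D$ recorded above together with Gauss--Bonnet gives $\int_D S_D(\omega)\,\omega|_D=2\pi\chi(\PP^1)=4\pi$ for every such $z$, independently of the fibre.

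Next I would discard the non-negative Hessian term and bound the scalar curvature from below by its minimum, $\int_D S_X\,\omega\ge\min_X S_X\cdot\int_D\omega$. The crucial point is that $\int_D\omega=[\omega]\cdot[F]=\Area_\omega(D)$ is a cohomological quantity, hence constant over all regular fibres, and the co-area normalization forces $\int_B\omega_0=\tfrac12$. Substituting these facts, the displayed inequality collapses to the purely numerical statement
\[
\min_X S_X(\omega)\cdot\Area_\omega(D)\;\le\;4\pi.
\]
Because the fibre class $[F]$ is a nonzero element of $H_2(X;\ZZ)$, the fibre $D$ is an admissible competitor for the homological $2$-systole, so $\syst_2(\omega)\le\Area_\omega(D)$; multiplying this by $\min_X S_X(\omega)>0$ yields the asserted bound $\min_X S_X(\omega)\cdot\syst_2(\omega)\le4\pi$.

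For the rigidity statement I would run the chain of inequalities backwards and record what equality forces at each stage: equality in Lemma~\ref{lem:stern-type} (the $\phi\equiv1$ case), whose equality clause gives $g(B)=1$ with $(B,\omega_0)$ flat; vanishing of the Hessian term $|\nabla\partial f|^2$ on almost every fibre; the identity $\int_D S_X\,\omega=\min_X S_X\cdot\Area_\omega(D)$ on almost every fibre, which forces $S_X\equiv\min_X S_X$, i.e.\ $\omega$ cscK; and finally $\syst_2(\omega)=\Area_\omega(D)$, i.e.\ the systole is realized by a $\PP^1$-fibre. The main obstacle is the passage from these analytic equality conditions to the precise extremal geometry claimed in the theorem. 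Here I expect to reuse exactly the structural input from the algebraic proof of Theorem~\ref{thm:ruled-g>=1} --- the Apostolov--T{\o}nnesen-Friedman classification of cscK metrics on ruled surfaces and Fujiki's rigidity in the $g(B)=1$ case --- to upgrade the data ``cscK metric over a flat elliptic base with $\nabla\partial f\equiv0$'' into the conclusion that $X$ is isometrically covered by $\PP^1\times\CC$ with the product of the Fubini--Study and flat metrics. I anticipate that the inequality itself is short and robust, with essentially all of the difficulty concentrated in this final identification of the rigid model.
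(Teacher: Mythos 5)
Your proof of the inequality is essentially the paper's own argument: take $\phi\equiv1$ in Lemma~\ref{lem:stern-type} so the right-hand side vanishes, apply Gauss--Bonnet on the regular fibres to get $\int_D S_D(\omega)\,\omega=2\pi\chi(\PP^1)=4\pi$, discard the non-negative Hessian term, bound $S_X$ below by its minimum, and use the fibre as a competitor for $\syst_2(\omega)$. Whether one cancels the factor $\int_B\omega_0$ at the end (as the paper does) or invokes the normalization $\int_B\omega_0=\tfrac12$ and the constancy of $\int_D\omega$ across fibres (as you do) is immaterial.

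The one substantive difference is the rigidity step, and there your plan has a small gap. The paper stays analytic: equality forces $\nabla \partial f\equiv0$, i.e.\ $df$ is parallel, and this \emph{directly} yields that the universal cover splits isometrically as $\PP^1\times\CC$ (together with flatness of the elliptic base and the cscK condition). You instead propose to import the Apostolov--T{\o}nnesen-Friedman classification and Fujiki's rigidity from the algebraic proof of Theorem~\ref{thm:ruled-g>=1}. That route can be made to work, but ATF applies to \emph{minimal} ruled surfaces $X=\PP(\mathcal{E})$, and nothing in your list of equality conditions (flat elliptic base, cscK, systole realized by a fibre) rules out blow-ups by itself --- in the algebraic proof, minimality came for free from the strict bound $2\pi$ on blow-ups, a fact you do not have here. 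The missing step is exactly the analytic information you listed but did not use: $\nabla\partial f\equiv0$ on almost every fibre gives, by continuity, $\nabla\partial f\equiv0$ on all of $X$, hence $|\partial f|$ is a nonzero constant, $f$ has no critical points, every fibre is a smooth $\PP^1$, and so $X$ carries no exceptional curves inside fibres and is minimal. Once you add this observation your argument closes; but note that at that point the parallelism of $df$ already produces the isometric splitting of the cover, so the detour through ATF and Fujiki becomes unnecessary.
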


\begin{proof}
Let $f\colon X\to B$ be the holomorphic fibration, and let $\omega_0$ be a constant curvature metric on $B$ of non-positive Gaussian curvature, so that \eqref{eq:levelset_ineq} holds:
    \begin{align*}
        \int_{B} \left[\int_{D} \left(
        \frac{\left|\nabla \partial f\right|^2}{|\partial f|^2}
        +S_X(\omega)\right)\omega \right]\omega_0
        \le  \int_{B} \left(\int_{D} S_{D}(\omega)\,\omega \right)\omega_0.
    \end{align*}
    For every regular value $z$, the fibre $D$ is a smooth rational curve,
    hence $D\cong\PP^1$ and $\chi(D)=2$. By Gauss--Bonnet formula,
    \[
        \int_{D} S_{D}(\omega)\,\omega
        = 2\pi\,\chi(D) = 4\pi.
    \]
    Integrating over $B$, we obtain
    \begin{align*}
       4\pi\int_{B}\omega_0 =2\pi\int_{B} \chi(D)\,\omega_0
       &= \int_{B} \left(\int_{D} S_{D}(\omega)\,\omega \right)\omega_0  \\     
       &\geq \int_{B} \left(\int_{D} S_{X}(\omega)\,\omega \right)\omega_0 \\
       &\geq \min_X S_X(\omega)\cdot\int_{B} \Vol_\omega(D)\,\omega_0.
    \end{align*}
    By definition of the homological $2$-systole we have
    \[
      \Vol_\omega(D)\;\geq\;\syst_2(\omega)
    \]
    for every regular fibre $D$. Hence
    \begin{align*}
        4 \pi \int_{B} \omega_0
        &\geq \min_X S_X(\omega)\cdot\int_{B} \Vol_\omega(D)\,\omega_0 \\
        &\geq \min_X S_X(\omega)\cdot\syst_2(\omega)\int_{B}  \omega_0 ,
    \end{align*}
    and since $\int_B\omega_0>0$ this yields
    \[
      \min_X S_X(\omega)\cdot\syst_2(\omega)\;\le\;4\pi,
    \]
as claimed. The equality case happens if $B$ is an elliptic curve endowed with a flat metric, $\nabla f$ is parallel, which implies $X$ is isometrically covered by $\PP^1 \times \CC$, and $(X,\omega)$ is cscK so that $\syst_2(\omega)$ is realized by $\PP^1$-fibre. 
\end{proof}

\begin{corollary}
Let $(X,\omega)$ be a non-rational PSC K\"ahler surface admitting a non-constant
holomorphic map $f\colon X\to B$ to a compact Riemann surface $B$ with genus $g(B)\ge2$. Then
\[
\min_X S_X(\omega)\cdot\syst_2(\omega) \;<\; 4\pi.
\]
\end{corollary}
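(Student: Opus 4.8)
The plan is to recognize this corollary as the strict, hyperbolic-base instance of Theorem~\ref{thm:ruled-g>=1-levelset}. That theorem already supplies the non-strict bound $\min_X S_X(\omega)\cdot\syst_2(\omega)\le 4\pi$, together with the characterization that equality forces $g(B)=1$ (an elliptic curve carrying a flat metric). Since the hypothesis here is $g(B)\ge 2$, equality is excluded at once, so the inequality is automatically strict. The only thing worth doing is to expose the mechanism of strictness, so that the estimate is self-contained rather than a bare appeal to the equality clause. Accordingly, I would replay the proof of Theorem~\ref{thm:ruled-g>=1-levelset} with the test function $\phi\equiv1$ and keep track of where $g(B)\ge2$ upgrades a ``$\le$'' to a ``$<$''.

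Concretely, with $\phi\equiv1$ the right-hand side of \eqref{eq:levelset_ineq} vanishes, so that inequality reads
\[
\int_B\!\left[\int_D\!\Bigl(\tfrac{|\nabla\partial f|^2}{|\partial f|^2}+S_X(\omega)-S_D(\omega)\Bigr)\omega\right]\!\omega_0\;\le\;0.
\]
By the equality clause of Lemma~\ref{lem:stern-type}, this is an equality precisely when $(B,\omega_0)$ is flat, i.e.\ $g(B)=1$. For $g(B)\ge2$ the uniformizing metric $\omega_0$ is hyperbolic, so $\operatorname{Rm}(\omega_0)$ is nowhere zero; since $f$ is a non-constant holomorphic map, $\partial f\neq0$ on a dense open set and along almost every regular fibre, whence the curvature defect in the Bochner inequality \eqref{important} integrates to a strictly positive quantity. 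Thus the displayed inequality is strict when $g(B)\ge2$.

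It then remains to feed this strict inequality into exactly the chain used for Theorem~\ref{thm:ruled-g>=1-levelset}: move $S_D(\omega)$ to the right and use $\int_D S_D(\omega)\,\omega=2\pi\chi(\PP^1)=4\pi$ on each rational fibre, discard the nonnegative term $|\nabla\partial f|^2/|\partial f|^2$, and bound $\int_D S_X(\omega)\,\omega\ge \min_X S_X(\omega)\cdot\Vol_\omega(D)\ge \min_X S_X(\omega)\cdot\syst_2(\omega)$. Each of these steps preserves the strict sign, so after dividing by $\int_B\omega_0>0$ one obtains $\min_X S_X(\omega)\cdot\syst_2(\omega)<4\pi$. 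The only genuinely delicate point is the one isolated in the previous paragraph, namely verifying that the strict positivity in the Bochner formula survives integration rather than being confined to the measure-zero critical locus of $f$; since $\partial f$ is nonzero on a dense open set for a non-constant holomorphic map, this holds automatically, and I expect no obstacle beyond this bookkeeping.
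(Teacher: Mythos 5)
Your proposal is correct and takes essentially the same route as the paper: the paper's proof likewise deduces strictness directly from the equality clause of Theorem~\ref{thm:ruled-g>=1-levelset} (equivalently Lemma~\ref{lem:stern-type}), since a base of genus $g(B)\ge2$ is hyperbolic and cannot carry a flat metric. Your extra unwinding of the mechanism --- the Bochner curvature defect for a hyperbolic target is strictly positive off the critical set of the non-constant holomorphic map $f$, so it survives integration and makes \eqref{eq:levelset_ineq} strict with $\phi\equiv1$ --- is a sound, slightly more self-contained justification of the same equality analysis, not a different argument.
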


\begin{proof}
If $g(B)\ge2$, then $B$ is hyperbolic and cannot carry a flat metric. Hence
equality in \eqref{main_ineq} cannot occur in Lemma~\ref{lem:stern-type}, nor in
Theorem~\ref{thm:ruled-g>=1-levelset}, and the inequality in
Theorem~\ref{thm:ruled-g>=1-levelset} is strict.
\end{proof}

\begin{example}
Let $X=\PP^1\times B$ be a compact complex surface, where $B$ is a compact
Riemann surface of genus $g(B)\ge2$. Equip $X$ with the product K\"ahler metric
\[
\omega=\omega_{\mathrm{FS}}\oplus\omega_B,
\]
where on $\PP^1$ we take the Fubini--Study metric normalized by
\[
\Vol_{\omega_{\mathrm{FS}}}(\PP^1)=\pi,\qquad S_{\PP^1}(\omega_{\mathrm{FS}})\equiv4,
\]
and on $B$ we choose a constant Chern scalar curvature metric with
\[
S_B(\omega_B)=-4+\varepsilon\quad\text{for some }\varepsilon\in(0,4).
\]
Then the Chern scalar curvature of the product metric is constant and given by
\[
S_X(\omega)=S_{\PP^1}(\omega_{\mathrm{FS}})+S_B(\omega_B)
=4+(-4+\varepsilon)=\varepsilon,
\]
so $\min_X S_X(\omega)=\varepsilon>0$ and $X$ has positive scalar curvature in
our convention.

Next we compare the areas of the two basic complex curves:
\begin{itemize}
  \item For the $\PP^1$-fibre $F=\PP^1\times\{p\}$, calibration by $\omega$
  gives
  \[
  \Vol_\omega(F)=\Vol_{\omega_{\mathrm{FS}}}(\PP^1)=\pi.
  \]
  \item For the $B$-fibre $B_p=\{q\}\times B$, Gauss--Bonnet for the Chern
  scalar curvature gives
  \[
  \int_B S_B(\omega_B)\,\omega_B=2\pi\chi(B)=2\pi(2-2g(B))=4\pi(1-g(B)).
  \]
  Since $S_B(\omega_B)\equiv-4+\varepsilon<0$, we obtain
  \[
  \Vol_\omega(B_p)=\Vol_{\omega_B}(B)
  =\frac{4\pi(g(B)-1)}{4-\varepsilon}.
  \]
\end{itemize}
For $g(B)\ge2$ and $\varepsilon\in(0,4)$ one has
\(
\Vol_\omega(B_p)>\pi
\),
so the $2$-systole is realized by the $\PP^1$-fibre:
\[
\syst_2(\omega)
=\min\bigl\{\Vol_\omega(F),\Vol_\omega(B_p)\bigr\}=\pi.
\]
Consequently,
\[
\min_X S_X(\omega)\cdot\syst_2(\omega)
=\varepsilon\cdot\pi \;<\; 4\pi.
\]
In particular, this product is independent of the genus $g(B)$, and it can be made arbitrarily close to $4\pi$ by letting $\varepsilon\nearrow 4$.
\end{example}

\bibliographystyle{amsalpha}
\bibliography{wpref}

\bigskip
  \footnotesize

  Zehao Sha, \textsc{Institute for Mathematics and Fundamental Physics, Shanghai, China}\par\nopagebreak
  Email address: \texttt{zhsha@imfp.org.cn}\par\nopagebreak
  Homepage: \url{https://ricciflow19.github.io/}

\end{document}